\documentclass[QTF8,fontset=windows,final, leqno]{siamltex}
\pagestyle{myheadings}
\setlength{\textwidth}{160mm}
\setlength{\textheight}{22.3cm}
\usepackage{booktabs} 
\usepackage[noadjust]{cite}
\usepackage{ulem}
\usepackage{ctex}
\usepackage{amsmath}
\allowdisplaybreaks[4]
\usepackage{cancel}
\usepackage{multirow}
\usepackage{graphicx}
\usepackage{amssymb}
\usepackage{color}
\usepackage{tikz}
\usepackage{hyperref}
\usepackage{float}
\usepackage{caption}
\usepackage[toc,page]{appendix}
\usepackage{mathrsfs}
\usepackage{latexsym, bm}
\usepackage{bm}
\numberwithin{equation}{section}
\newtheorem{remark}{Remark}[section]

\newcommand{\vertiii}[1]{{\left\vert\kern-0.23ex\left\vert\kern-0.23ex\left\vert #1
		\right\vert\kern-0.23ex\right\vert\kern-0.23ex\right\vert}}

\normalsize




\def\Norm#1#2{\left\|\,#1\,\right\|_{#2}}

\def\no{{\nonumber}}

\newcommand{\e}[1]{\mathbf{e}^{#1}}

\newcommand{\mN}{\mathcal{N}}
\newcommand{\timesr}{\times_r}
\newcommand{\mL}{\mathcal{L}}


\newcommand{\il}[2]{\left\langle #1,#2\right\rangle_2}
\newcommand{\norm}[1]{\left\|#1\right\|}

\newcommand{\Lh}{\mathcal{L}_h}
\newcommand{\Nh}{\mathcal{N}_h}
\newcommand{\PN}{P_N}
\newcommand{\VN}{V_N}
\newcommand{\N}{\mathcal{N}}

\newcommand{\dt}{\tau} 
\newcommand{\inner}[2]{\left( #1, #2 \right)}
\newcommand{\fnorm}[1]{\|#1\|_F}
\newcommand{\innerx}[2]{\left( #1, #2 \right)_{\mathcal{X}}}
\newcommand{\gradcurl}[1]{\nabla \times #1}

\def\ba{\mathbf{a}}

\newcommand{\tensor}[1]{\bm{\mathrm{#1}}}
\newcommand{\vect}[1]{\bm{#1}}
\newcommand{\vu}[1]{\hat{\bm{#1}}}
\def\mL{\mathcal{L}}
\def\mN{\mathcal{N}}

\def\ba{\begin{align}}
\def\da{\end{align}}

\CTEXoptions[today=old]
\begin{document}


\renewcommand{\thefootnote}{\fnsymbol{footnote}}
\title{On the maximum bound principle and energy dissipation of exponential time differencing methods for the chiral liquid crystal blue phases\footnote{Last update: \today}}

\author{Wenshuai Hu
	\and
	Guanghua Ji\thanks{Laboratory of Mathematics and Complex Systems, Ministry of Education and School of Mathematical Sciences,
		Beijing Normal University, Beijing 100875, China.\newline
				\texttt{202431130052@mail.bnu.edu.cn}, \texttt{ghji@bnu.edu.cn, Corresponding author}}
}

\maketitle
\begin{abstract}
    The blue phases  are fascinating and complex states of chiral liquid crystals which can be modeled by a comprehensive framework of the Landau-de Gennes theory.
In this paper, we develop and analyze a first order and a second order exponential time differencing schemes for the gradient flow of the  chiral liquid crystal blue phases, which preserve the maximum bound principle  and energy dissipation unconditionally.
The fully discrete schemes are obtained coupled with the Fourier spectral method in space.
And we propose a novel matrix-form Helmholtz basis transformation method  to diagonalize the combined operator of the Laplacian and the curl operator, which is the key step in the implementation of the proposed schemes.
Then by constructing  auxiliary functions, we drive the $L^\infty$ boundedness of the numerical solutions, the energy dissipation and the error estimates in $L^2$ and $L^\infty$ norm.
Various numerical experiments are presented to validate the theoretical results and demonstrate the effectiveness of the proposed methods in simulating the dynamics of blue phases in chiral liquid crystals.
\end{abstract}

\begin{keywords}
Blue phase, exponential time differencing method, spectral discretization, energy dissipation, maximum bound principle, error analysis.

\end{keywords}

\pagestyle{myheadings}
\thispagestyle{plain}
\markboth{Wenshuai Hu
	AND
	Guanghua Ji}{Exponential time differencing methods for the blue phases of chiral liquid crystals}

\section{Introduction}~\\
The blue phases(BPs) of chiral liquid crystals are a fascinating class of soft materials that exhibit unique optical and mechanical properties \cite{BluePhasesLiquid1985,PhysRevAblue1987}. They are characterized by a three-dimensional cubic lattice structure formed by the self-assembly of chiral molecules, resulting in a periodic arrangement of defects known as disclination lines \cite{goodby1991chirality,salamonczyk2019multi}.
To accurately describe the phase transitions and topological defect evolution in such complex structures, the Landau-de Gennes theory provides a powerful theoretical framework by introducing a second order tensor $\mathrm{Q}$, which can also be extended to the chiral blue-phases \cite{deGennes1974,PhysRevX_elastic2017,Ravnik01102009,bahr2001chirality}. This paper is based on these basis theories and conducts research starting from the classical blue-phase model \cite{bahr2001chirality}.
Following \cite{deGennes1974,bahr2001chirality},  the Landau-de Gennes free energy  functional of BPs can be expressed as
\begin{alignat}{2}
	F[Q]=\int_\Omega \left[ \frac{L_1}{2}\|\nabla Q\|_F^2+\frac{L_4}{2}Q:\nabla \times Q
	+\frac{\alpha}{2} \|Q\|_F^2-\frac{\beta}{3} (Q^2,Q)_F+\frac{\gamma}{4} \|Q\|_F^4\right] d^3r,\label{free_energy}
\end{alignat}
where $\|\nabla Q\|_F^2=\sum_{i,j,k=1}^3 (\partial_k Q_{ij})^2, Q: \nabla \times Q=\sum_{i,j,k=1}^3 \varepsilon_{ikl}Q_{ij}\partial_k Q_{lj}, \|Q\|_F^2=(Q,Q)_F=\sum_{i,j=1}^3 Q_{ij}^2,$
 $\varepsilon_{ikl}$ is the Levi-Civita symbol and $(\cdot,\cdot)_F$ denotes the Frobenius inner product.
Here $\Omega$ is a bounded domain in $\mathbb{R}^{3}$ with smooth boundary, and $Q$ is a symmetric traceless $3\times 3$ matrix.
The parameters $L_1$ and $L_4$ are elastic constants, which satisfy $L_1>0,L_1+L_4>0$ \cite{ravnik2009landau,wang2021modelling,deGennes1974}.
$\alpha =  a(\theta - \theta^*) = a \Delta \theta$ where $a > 0$, $\theta^*$ is the fixed temperature  of the system, and $\Delta \theta$ is the temperature difference from the transition temperature \cite{mottramintroduction}.
 $\beta$ and $\gamma>0$ are material constants that characterize the elastic properties of the liquid crystal \cite{nguyen2013refined}.

The gradient flow equation is derived from the free energy functional \eqref{free_energy} and describes the time evolution of $Q$ in a chiral liquid crystal. From \cite{mottramintroduction,wang2021modelling} the equation subject to periodic boundary condition and initial condition can be expressed as follows:
\begin{equation}\label{1.6}
	\begin{aligned}
			\partial_t Q &= L_1 \Delta Q -\frac{L_4}{2} (\nabla \times Q+(\nabla \times Q)^T)-  \alpha Q +\beta \left( Q^2 - \frac{1}{3}\text{trace}(Q^2)I\right) -\gamma \text{trace}(Q^2)Q, \\
& Q(0,\boldsymbol{x})=Q_0(\boldsymbol{x})  \qquad {\rm in}~  \Omega_0=\Omega\times\{t=0\},\\&
Q(t,\cdot)~\text{is } \Omega ~\text{periodic},  \qquad t \in [0,T],
	\end{aligned}
\end{equation}
where $-\frac{1}{3}\beta \text{trace}(Q^2)I$ accounts for the traceless constraint of the Q-tensor.

 Building upon the Landau-de Gennes Q-tensor framework, extensive theoretical and numerical studies have elucidated the formation mechanisms, equilibrium structures, and field-induced transformations of chiral blue phases, including the emergence of cubic BPI and BPII lattices \cite{RevModPhys.61.385,grebel1983landau,dupuis2005numerical,alexander2009numerical}, their stabilization under external electric fields \cite{kitzerow1991effect,chen2013electric}, and the amorphous defect networks characteristic of BP III \cite{henrich2011structure,gandhi2017unraveling,yoshizawa2024amorphous}.
 Especially, Hicks and Walker \cite{hicks2024modelling} presents a fully implicit, weighted gradient flow finite-element approach for the Landau-de Gennes cholesteric model,   accurately capturing the rich chiral structures observed experimentally.
Nevertheless, the majority of studies have been restricted to experimental or phenomenological simulations, lacking rigorous numerical formulations and systematic analyses of the underlying computational methods.


The numerical method we will adopt in this work is the exponential time differencing (ETD) method,  which be designed to  solve the stiff term exactly and  satisfy energy dissipation and maximum bound principle (MBP) \cite{beylkin1998new,cox2002exponential,du2019,du2021}.
The ETD method has been successfully applied to a wide range of problems, including  the Allen-Cahn equations \cite{du2019,du2021,jiang2022unconditionally}, satisfing MBP and energy dissipation unconditionally, the Cahn-Hilliard equations  \cite{li2019convergence, zhou2023energy,ZHANG2024108974,zhang2025convergence}, coupled with finite difference or Fourier spectral method in space and satisfying energy dissipation and mass
conservation unconditionally,
the epitaxial growth model \cite{ju2018energy} and matrix-valued Allen-Cahn equation \cite{du2021,hu2025maximumboundprincipleqtensor,liu2024maximum}.
{The high efficiency of the ETD type algorithm primarily relies on the fast diagonalization of discrete differential operators, such as Laplacian operator with suitable boundary conditions, but the curl operator in BPs model will bring significant challenge. Inspired by \cite{RevModPhys.61.385,bahr2001chirality,dupuis2005numerical}, we construct a Helmholtz basis in Fourier spectral space to diagonalize Laplace and curl operator simultaneously.}

In this work, we develop and analyze first and second order exponential time differencing  numerical schemes for the gradient flow of the  chiral liquid crystal blue phases, and give the fully discrete schemes by using the Fourier spectral method in space. 
We rigorously prove that the proposed schemes preserve the maximum bound principle  and energy dissipation unconditionally at the semi-discrete level and derive the error estimates in $L^2$ and $L^\infty$ norm.

 The rest of this paper is organized as follows. In Section \ref{section2}, we introduce some notations and present the numerical schemes. In Section \ref{section3}, we prove that the proposed schemes preserve the maximum bound principle  and energy dissipation unconditionally at the semi-discrete level. In Section \ref{section4}, we give the fully discrete schemes by using the Fourier spectral method in space and propose a novel matrix-form Helmholtz basis transformation method to diagonalize the combined operator of the Laplacian and the curl operator. In Section \ref{section5}, we derive the error estimates in $L^2$ and $L^\infty$ norm for the fully discrete schemes. In Section \ref{section6}, we present various numerical experiments to validate the theoretical results and demonstrate the effectiveness of the proposed methods in simulating the dynamics of blue phases in chiral liquid crystals. Finally, we conclude this paper in Section \ref{section7}.

\section{Exponential time differencing Runge-Kutta schemes}\label{section2}~\\
Based on the symmetric and traceless properties of the Q-tensor,  we define by $\mathbb{R}_{s,0}^{3 \times 3}$ the set of Q-tensors
\begin{equation}
	\mathbb{R}_{s,0}^{3 \times 3} \triangleq\left\{Q \in \mathbb{R}^{3 \times 3}\left\lvert\,Q_{i j}=Q_{j i},  Q_{i i}=0 \right.\right\}.
\end{equation}
The space $\mathcal{Z}=L^{\infty}\left(\bar{\Omega} ; \mathbb{R}_{s,0}^{3 \times 3}\right)$ is  the set of essentially bounded tensor functions  equipped with the standard $L^\infty$ norm:
\begin{align*}
	\|Q\|_\mathcal{Z}&=\max_{\boldsymbol{x} \in \bar{\Omega}}\left( \| Q(\boldsymbol{x})\|_F^2\right)^\frac{1}{2}.
\end{align*}
The sobolev space $\mathcal{X}=H^{1}\left(\bar{\Omega} ; \mathbb{R}_{s,0}^{3 \times 3}\right)$ is the space of square-integrable tensor functions with square-integrable first-order weak derivatives. We equip the space $\mathcal{X}$ with the standard $L^2$ inner product, denoted by $(\cdot, \cdot)_\mathcal{X}$, which for any two functions $P, Q \in \mathcal{X}$ is defined as:
\begin{equation}
    (P, Q)_\mathcal{X} := \int_{\Omega} P(\boldsymbol{x}) : Q(\boldsymbol{x}) \, d\boldsymbol{x}.
    \no
\end{equation}
The corresponding induced $L^2$-norm, denoted by $\|\cdot\|_\mathcal{X}$, and $H^1$-norm, denoted by $\|\cdot\|_{H^1}$ are  given by:
\begin{align*}
\|Q\|_\mathcal{X}&= (Q,Q)_\mathcal{X}^\frac{1}{2}:=\left(\int_{\Omega}\|Q(\boldsymbol{x})\|_F^{2} d\Omega\right)^{\frac{1}{2}},\\
\|Q\|_{H^1}&=\left(\|\nabla Q\|_{L^2}^2+\|Q\|_{L^2}^2\right)^{\frac{1}{2}}.
\end{align*}

For  constructing the ETD schemes and the following theoretical analysis, we introduce two stability positive constants $\kappa_1$ and $\kappa_2$ in  \eqref{1.6}.
Then  the right-hand side of \eqref{1.6} can be decomposed into a linear part $\mL Q$ and a nonlinear part $\mN (Q)$ as follows:
\begin{equation}\label{l_decompose}
    \begin{aligned}
        	\mL Q &= L_1\Delta Q - \frac{L_4}{2} (\nabla \times Q+(\nabla \times Q)^T)- \kappa_1 Q-\kappa_2 Q,\\
	\mN (Q) &= \kappa_1 Q +\kappa_2 Q+ f(Q),\quad
	f(Q)= -  \alpha Q +\beta \left( Q^2 - \frac{1}{3}\text{trace}(Q^2)I\right) -\gamma \text{trace}(Q^2)Q.
    \end{aligned}
\end{equation}
Using the above decomposition \eqref{l_decompose}, we can rewrite the equation  \eqref{1.6} into the following form:
\begin{equation}\label{1.8}
	\begin{aligned}
	&Q_t-\mL Q =\mN (Q), &\\
	&Q(0,\boldsymbol{x})=Q_0(\boldsymbol{x}),  & {\rm in}~  \Omega_0,\\
	 &Q(t,\cdot)~\text{is } \Omega ~\text{periodic},  & t \in [0,T].
	\end{aligned}
\end{equation}

 Define the time intervals as $t_m=m\tau$, $m\ge 0$, where $\tau>0$ is the time step size and set the numerical solution at $t_m$ as $Q_{m}$.
Based on the  Duhamel's formula \cite{Rousset2021},
   we can express the exact solution of \eqref{1.8} for an interval $[t_m, t_{m+1}]$  as follows:
   \begin{align}
	Q(t_{m+1})=e^{\tau \mL} Q(t_m)+\int_{0}^{\tau}e^{(\tau-\xi) \mL}\mN(Q(t_m+\xi))d\xi.\label{1.9a}
\end{align}
Using $ Q(t_m)$ to approximate $Q(t_m+\xi)$ in \eqref{1.9a}, we can derive the following  first-order explicit  ETD  scheme (ETD1) \cite{du2019,du2021,ju2018}
\begin{align}
		Q_{m+1}=e^{\tau \mL} Q_{m}+\int_{0}^{\tau}e^{(\tau-\xi) \mL}\mN(Q_{m})d\xi.\label{etd1}
\end{align}
Using $(1-\frac{s}{\tau})\mN(Q(t_m)) +\frac{s}{\tau} \mN(\widetilde{Q}(t_{m+1}))$ to approximate $Q(t_m+\xi)$, we get the following  second-order explicit  ETD Runge-Kutta  scheme (ETDRK2) \cite{du2019,du2021,ju2018}
\begin{equation}\label{etd2}
	\begin{aligned}
		\widetilde{Q}_{m+1}&=e^{\tau \mL} Q_{m}+\int_{0}^{\tau}e^{(\tau-\xi) \mL}\mN(Q_{m})d\xi\\
		Q_{m+1}&=e^{\tau \mL} Q_{m}+\int_{0}^{\tau}e^{(\tau-\xi) \mL}((1-\frac{s}{\tau})\mN(Q(t_m)) +\frac{s}{\tau} \mN(\widetilde{Q}(t_{m+1})))d\xi.
	\end{aligned}
\end{equation}

To express the schemes more concisely, we can introduce some $\varphi$-functions \cite{du2021} defined as
\begin{align}
    \varphi_0(z) = e^z, \quad \varphi_1(z) = \frac{e^z - 1}{z}, \quad \varphi_2(z) = \frac{e^z - 1 - z}{z^2}, \quad z \neq 0.\label{var_functions}
\end{align}
Then we can rewrite the schemes \eqref{etd1} and  \eqref{etd2} respectively as
\begin{align*}
		{\rm ETD1}: Q_{m+1}&=\varphi_0(\tau \mL) Q_{m}+\varphi_1(\tau \mL)\mN(Q_{m}),\\
		{\rm ETDRK2}:\widetilde{Q}_{m+1}&=\varphi_0(\tau \mL) Q_{m}+\tau \varphi_1(\tau \mL)\mN(Q_{m})\\
		Q_{m+1}&= \widetilde{Q}_m+\tau\varphi_2(\tau \mL)(\mN(\widetilde{Q}_{m+1})-\mN(Q_{m})).
\end{align*}
We further define the linear and nonlinear operators associated with each stabilization parameter $\kappa_i$ as:
\begin{align}
	\mL_{\kappa_i } Q = L_1\Delta Q - \frac{L_4}{2} (\nabla \times Q+(\nabla \times Q)^T)-\kappa_i Q,\quad
	\mN_{\kappa_i}(Q) = \kappa_i Q+ f(Q),\quad i=0,1,2,\label{Lkappa}
\end{align}
where $\kappa_0=0$ and the choice of $\kappa_1$ and $\kappa_2$ will be discussed in the next section.
\section{ Maximum bound principle and unconditional energy dissipation}\label{section3}
~\\
In this section, we  prove that the BPs equation \eqref{1.8} has a unique solution in $\mathcal{Z}$  and preserves the MBP property.
Then we show that the semi-discrete ETD schemes \eqref{etd1} and  \eqref{etd2} preserve the MBP and energy dissipation unconditionally.
    First, we show that the dissipation  of the  linear operator $\mL_{\kappa_2}$.
\begin{lemma}\label{3.1}
    Let the operator $\mL_{\kappa_2 }$ be defined in \eqref{Lkappa}.
	For $ W \in \mathcal{Z}$, when $L_1 \geq 0, \kappa_2 \geq \frac{ L_4^2}{2 L_1}$, there exists a positive constant $\lambda$ such that
	\begin{align}
	\lambda \Norm{W(\boldsymbol{x})}{\mathcal{Z}}&\leq \Norm{(\lambda I-\mL_{\kappa_2 })W(\boldsymbol{x})}{\mathcal{Z}} \label{1.17a},
	\end{align}
	Then
 the linear operator  $ \mL_{\kappa_2 }$  generates a contraction semigroup  $\left\{e^{t  \mL_{\kappa_2 }}\right\}_{t \geq 0},$ and for $t \geq 0$, it holds that
	\begin{align}
	\Norm{e^{t  \mL_{\kappa_2 }}W(\boldsymbol{x})}{\mathcal{Z}}&\leq \Norm{W(\boldsymbol{x})}{\mathcal{Z}}.\label{1.17b}
	\end{align}
\end{lemma}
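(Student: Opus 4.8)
The plan is to recognize \eqref{1.17a} as the dissipativity (resolvent) estimate in the Lumer--Phillips / Hille--Yosida framework: once I establish $\lambda\Norm{W}{\mathcal{Z}}\le\Norm{(\lambda I-\mL_{\kappa_2})W}{\mathcal{Z}}$ for every $\lambda>0$ and every $W$ in the domain of $\mL_{\kappa_2}$, together with the range condition that $\lambda I-\mL_{\kappa_2}$ is surjective for some $\lambda>0$, the standard generation theorem yields that $\mL_{\kappa_2}$ generates a contraction semigroup, and \eqref{1.17b} is just the resulting contraction property. The range condition is routine here: since $\mL_{\kappa_2}$ has constant coefficients under periodic boundary conditions, it is diagonalized by the Fourier/Helmholtz basis introduced later in the paper, so the resolvent $(\lambda I-\mL_{\kappa_2})^{-1}$ can be written down mode by mode. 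Hence the real content is the pointwise estimate \eqref{1.17a}.

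To prove \eqref{1.17a}, I would argue at a maximum point. Assuming $W\not\equiv 0$ (the zero case being trivial) and $W$ smooth enough to evaluate $\mL_{\kappa_2}W$ pointwise, pick $\boldsymbol{x}_0\in\bar\Omega$ with $\|W(\boldsymbol{x}_0)\|_F=\Norm{W}{\mathcal{Z}}$; such a point exists by continuity and compactness, and periodicity removes any boundary obstruction. Writing $V=(\lambda I-\mL_{\kappa_2})W$ and pairing with $W$ in the Frobenius inner product at $\boldsymbol{x}_0$ gives $(V(\boldsymbol{x}_0),W(\boldsymbol{x}_0))_F=\lambda\|W(\boldsymbol{x}_0)\|_F^2-(\mL_{\kappa_2}W(\boldsymbol{x}_0),W(\boldsymbol{x}_0))_F$. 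If I can show $(\mL_{\kappa_2}W(\boldsymbol{x}_0),W(\boldsymbol{x}_0))_F\le 0$, then Cauchy--Schwarz $(V,W)_F\le\|V\|_F\|W\|_F$ gives $\|V(\boldsymbol{x}_0)\|_F\ge\lambda\|W(\boldsymbol{x}_0)\|_F$, and since $\Norm{V}{\mathcal{Z}}\ge\|V(\boldsymbol{x}_0)\|_F$ and $\|W(\boldsymbol{x}_0)\|_F=\Norm{W}{\mathcal{Z}}$, the estimate \eqref{1.17a} follows. This pointwise pairing is exactly the duality-map argument that replaces the Hilbert-space inner product in the $L^\infty$ setting.

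It remains to verify $(\mL_{\kappa_2}W,W)_F\le 0$ at $\boldsymbol{x}_0$. Since $\phi(\boldsymbol{x}):=\|W(\boldsymbol{x})\|_F^2$ attains an interior maximum at $\boldsymbol{x}_0$, its gradient vanishes and its Laplacian is nonpositive there; from $\tfrac12\Delta\phi=(W,\Delta W)_F+\|\nabla W\|_F^2$ I get $(W,\Delta W)_F\le-\|\nabla W\|_F^2$ at $\boldsymbol{x}_0$. For the chiral term, the symmetry $W_{ij}=W_{ji}$ gives $(W,\nabla\times W+(\nabla\times W)^T)_F=2(W,\nabla\times W)_F$, and the entrywise structure of the curl yields the sharp bound $\|\nabla\times W\|_F\le\sqrt2\,\|\nabla W\|_F$; combining this with Cauchy--Schwarz and Young's inequality, $L_4\,|(W,\nabla\times W)_F|\le L_1\|\nabla W\|_F^2+\tfrac{L_4^2}{2L_1}\|W\|_F^2$. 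Summing the three contributions, the $\|\nabla W\|_F^2$ terms cancel and I am left with $(\mL_{\kappa_2}W,W)_F\le\bigl(\tfrac{L_4^2}{2L_1}-\kappa_2\bigr)\|W\|_F^2$, which is $\le 0$ precisely under the hypothesis $\kappa_2\ge L_4^2/(2L_1)$ (with $L_1>0$, so that the Young step makes sense).

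The main obstacle is twofold. First, the constant in the curl estimate must be tracked sharply: obtaining $\|\nabla\times W\|_F\le\sqrt2\,\|\nabla W\|_F$ rather than a larger constant is what makes the Young step reproduce exactly the threshold $\kappa_2\ge L_4^2/(2L_1)$, so this entrywise counting has to be done carefully rather than with a crude bound. Second, the maximum-point argument presupposes enough regularity of $W$ to evaluate second derivatives at $\boldsymbol{x}_0$; making this rigorous requires working on a suitable dense domain (for instance $C^2$- or $H^2$-periodic tensors) and then extending the dissipativity estimate to the closure, after which the Fourier-diagonalization-based range condition completes the Lumer--Phillips hypotheses and delivers \eqref{1.17b}.
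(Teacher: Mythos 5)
Your proposal is correct and follows essentially the same route as the paper's own proof: evaluate at a maximum point of $\|W\|_F^2$, use the subharmonicity identity to bound $(W,\Delta W)_F$ by $-\|\nabla W\|_F^2$, bound the chiral term via $\|\nabla\times W\|_F^2\le 2\|\nabla W\|_F^2$ and Young's inequality so the gradient terms cancel under $\kappa_2\ge L_4^2/(2L_1)$, then apply Cauchy--Schwarz for the resolvent estimate and Lumer--Phillips for the contraction semigroup. Your additional remarks on the range condition and the regularity/density issue are points the paper handles only by citation, so they are a welcome (if not strictly required) supplement.
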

\begin{proof}
	First, for any  $W \in \mathcal{Z}$, $W(\boldsymbol{x})=\{w_{ij}(\boldsymbol{x}),i=1,2,3,j=1,2,3 \}$, there exists $\boldsymbol{x_0} \in  \Omega$ (for the homogeneous Dirichlet boundary condition) or $\boldsymbol{x_0} \in \overline{\Omega}$ (for the periodic or homogeneous Neumann boundary condition) such that
\begin{align}
\Norm{W(\boldsymbol{x})}{\mathcal{Z}}=\max _{\boldsymbol{x} \in \bar{\Omega}}\Big|\sum_{i=1}^3 \sum_{j=1}^3 w_{ij}^2(\boldsymbol{x})\Big|^{\frac{1}{2}} =\| W(\boldsymbol{x_0})\|_F=\left(\sum_{i=1}^3 \sum_{j=1}^3 w_{ij}^2(\boldsymbol{x_0})\right)^{\frac{1}{2}}.
\end{align}

Since $\displaystyle\sum_{i=1}^3 \sum_{j=1}^3 w_{ij}^2(\boldsymbol{x_0})$ is a real scalar-valued function, and
  the compatibility of matrix norms,  at the point $\boldsymbol{x_0}$, we have
\begin{align}
\| \nabla \times W(\boldsymbol{x_0})\|_{F}^2 &\leq 2 \|\nabla W(\boldsymbol{x_0})\|_{F}^2,\\
	\| \nabla W(\boldsymbol{x_0})\|_{F}^2+W(\boldsymbol{x_0}):\Delta W(\boldsymbol{x_0}) &\leq  \Delta \| W(\boldsymbol{x_0})\|_{F}^2\leq 0.
\end{align}
Then using the Young's inequality and the above two inequalities, we have
\begin{align}
 \mL_{\kappa_2 } W(\boldsymbol{x_0}) : W(\boldsymbol{x_0})\leq&
L_1 W(\boldsymbol{x_0}):\Delta W(\boldsymbol{x_0})
-L_4 (\nabla \times W(\boldsymbol{x_0})): W(\boldsymbol{x_0})-{\kappa_2 } \|W(\boldsymbol{x_0})\|_{F}^2
\no\\\leq&
L_1 W(\boldsymbol{x_0}):\Delta W(\boldsymbol{x_0})+|L_4| \epsilon \|\nabla \times
 W(\boldsymbol{x_0})\|_{F}^2+ \frac{|L_4|}{4 \epsilon} |
 W(\boldsymbol{x_0})\|_{F}^2-{\kappa_2 } \|W(\boldsymbol{x_0})|_{F}^2
 \no\\\leq&
-L_1 \|\nabla W(\boldsymbol{x_0})|_{F}^2+2 |L_4|  \epsilon \|\nabla
 W(\boldsymbol{x_0})\|_{F}^2+ (\frac{|L_4|}{4 \epsilon}-{\kappa_2 }) \|
 W(\boldsymbol{x_0})\|_{F}^2
 \no\\\leq&
(-L_1+2 |L_4|  \epsilon ) \|\nabla W(\boldsymbol{x_0})|_{F}^2 + (\frac{|L_4|}{4 \epsilon}-{\kappa_2 }) \|
 W(\boldsymbol{x_0})\|_{F}^2.\label{t10}
\end{align}

When the right side of \eqref{t10} is less than or equal to zero, we can get
\begin{align*}
2|L_4|  \epsilon \leq L_1, \quad
(\frac{|L_4|}{4 \epsilon}-{\kappa_2 }) \leq 0.
\end{align*}
Through calculations, we can get
\begin{align*}
	 \epsilon \leq \frac{L_1}{2|L_4| }, \quad
 {\kappa_2 } \geq \frac{|L_4|}{4 \epsilon} \geq \frac{ L_4^2}{2 L_1}.
\end{align*}

That is when $L_1+L_4 \geq 0, \kappa_2 \geq \frac{ L_4^2}{2 L_1}$, we can choose a suitable $\epsilon$ such that
\begin{align*}
\mL_{\kappa_2 } W(\boldsymbol{x_0}) : W(\boldsymbol{x_0}) \leq 0.
\end{align*}
Then for any $\lambda  > 0$, we have
\begin{align}
	\lambda\| W(\boldsymbol{x_0})\|_F^2\leq &\lambda W^2(\boldsymbol{x_0})-\mL_{\kappa_2 } W(\boldsymbol{x_0}): W(\boldsymbol{x_0})\no\\=&
	 W(\boldsymbol{x_0}):(\lambda I-\mL_{\kappa_2 })W(\boldsymbol{x_0})
	\no\\\leq&\left( \|W(\boldsymbol{x_0})\|_F^2\right)^{\frac{1}{2}} \left( \|(\lambda I-\mL_{\kappa_2 })W(\boldsymbol{x_0})\|_F^2\right)^{\frac{1}{2}}
	\no\\=& \| W(\boldsymbol{x_0})\|_F \|(\lambda I-\mL_{\kappa_2 }) W(\boldsymbol{x_0})\|_F. \label{2.3}
\end{align}
Taking the supremum norm on both sides of \eqref{2.3}, we have
\begin{align*}
	\lambda \| W(\boldsymbol{x})\|_{\mathcal{Z}}\leq  \|(\lambda I-\mL_{\kappa_2 }) W(\boldsymbol{x})\|_{\mathcal{Z}}.
\end{align*}
Then we can use the Lumer-Phillips theorem to deduce the contraction property \eqref{1.17b} of the semigroup $\left\{e^{t  \mL_{\kappa_2 }}\right\}_{t \geq 0}$ \cite{du2021}.
\end{proof}

\begin{lemma}\label{3.2}
 Let  $a>0$ big enough, depending on the coefficients $(L_1,L_4,\alpha,\beta,\gamma)$ of the equation \eqref{1.6}, $b=\frac{\beta^2}{\gamma^2}-\frac{2(\alpha-{\kappa_2 })}{\gamma}$.  For any $Q \in \mathcal{Z} $ satisfing $\|Q\|^2_F\leq a^2$, for any $\boldsymbol{x} \in \bar{\Omega}$, when $\kappa_1 \geq\frac{(\kappa_2+C_f)^2}{\gamma (a^2-b)}, b<a^2$, we have
	   \begin{align}
	   &	\|\mN(Q)\|_F^2\leq \kappa_1^2 a^2,\no
	   \end{align}
   \end{lemma}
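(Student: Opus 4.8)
The plan is to reduce the claim to a pointwise algebraic inequality for symmetric traceless $3\times 3$ matrices and then estimate $\fnorm{\mN(Q)}$ by expanding it around the stabilizing term $\kappa_1 Q$. Since $\mN$ acts pointwise (no spatial derivatives) and the asserted bound $\fnorm{\mN(Q)}^2\le\kappa_1^2a^2$ is stated pointwise in $\boldsymbol x$, I fix $\boldsymbol x$ and regard $Q=Q(\boldsymbol x)$ as a single symmetric traceless matrix with $s:=\fnorm{Q}^2=\tr(Q^2)\le a^2$. Writing $\mN(Q)=\kappa_1 Q+\mN_{\kappa_2}(Q)$ with $\mN_{\kappa_2}(Q)=\kappa_2 Q+f(Q)$ as in \eqref{Lkappa}, I expand
\begin{align*}
\fnorm{\mN(Q)}^2=\kappa_1^2\fnorm{Q}^2+2\kappa_1\,(Q,\mN_{\kappa_2}(Q))_F+\fnorm{\mN_{\kappa_2}(Q)}^2 ,
\end{align*}
so the goal becomes to control the cross term and the last term so that the right-hand side stays below $\kappa_1^2a^2$.

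For the cross term I use a structural (quasi-dissipative) estimate. Using $\tr(Q)=0$ and $\tr(Q^2)=s$, the trace term in $f$ drops out against $Q$ and one gets
\begin{align*}
(Q,\mN_{\kappa_2}(Q))_F=(\kappa_2-\alpha)\,s+\beta\,\tr(Q^3)-\gamma s^2 .
\end{align*}
The only sign-indefinite contribution is the cubic $\beta\,\tr(Q^3)$. I would bound it by the elementary eigenvalue inequality $|\tr(Q^3)|\le\fnorm{Q}^3=s^{3/2}$ (valid for symmetric matrices, since $\max_i|\lambda_i|\le\fnorm{Q}$), followed by Young's inequality $|\beta|\,s^{3/2}\le\frac{\beta^2}{2\gamma}\,s+\frac{\gamma}{2}\,s^2$. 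This reproduces exactly the constant $b=\frac{\beta^2}{\gamma^2}-\frac{2(\alpha-\kappa_2)}{\gamma}$ and yields $(Q,\mN_{\kappa_2}(Q))_F\le-\tfrac{\gamma}{2}\,s\,(s-b)$. For the last term I invoke the growth bound on the nonlinearity: since $f(0)=0$, on the ball $\fnorm{Q}\le a$ one has $\fnorm{f(Q)}\le C_f\fnorm{Q}$ with $C_f$ the (coefficient- and $a$-dependent) growth constant of $f$, whence $\fnorm{\mN_{\kappa_2}(Q)}\le(\kappa_2+C_f)\fnorm{Q}$, i.e. $\fnorm{\mN_{\kappa_2}(Q)}^2\le(\kappa_2+C_f)^2 s$.

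Combining the three estimates gives $\fnorm{\mN(Q)}^2\le g(s)$ with the concave quadratic
\begin{align*}
g(s)=-\kappa_1\gamma\,s^2+\big[\kappa_1^2+\kappa_1\gamma b+(\kappa_2+C_f)^2\big]\,s ,
\end{align*}
and it remains to show $g(s)\le\kappa_1^2a^2$ for all $s\in[0,a^2]$. Evaluating at the endpoint $s=a^2$ produces exactly the hypothesis: $g(a^2)\le\kappa_1^2a^2$ is equivalent (after dividing by $a^2$) to $\kappa_1\gamma(a^2-b)\ge(\kappa_2+C_f)^2$, i.e. $\kappa_1\ge\frac{(\kappa_2+C_f)^2}{\gamma(a^2-b)}$, where $b<a^2$ guarantees the threshold is positive and finite. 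To cover the whole interval I note $g$ is concave with $g(0)=0$, so it suffices that $g$ be nondecreasing on $[0,a^2]$, i.e. that the vertex $s^\ast=\frac{\kappa_1^2+\kappa_1\gamma b+(\kappa_2+C_f)^2}{2\kappa_1\gamma}$ lie to the right of $a^2$; this is where "$a$ big enough" is used, since the cubic part of $f$ forces $C_f\gtrsim\gamma a^2$ and hence the $\kappa_1$-threshold to exceed $\gamma a^2$. Then $\max_{[0,a^2]}g=g(a^2)\le\kappa_1^2a^2$, and restoring the supremum over $\boldsymbol x$ gives the stated bound in $\mathcal Z$.

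The main obstacle is the handling of the cubic term $\beta\,\tr(Q^3)$: it is the genuinely nonlinear, sign-indefinite part, and the whole point of the constant $b$ is that the particular combination of the trace inequality and Young's inequality is precisely what converts it into the dissipative quartic $-\frac{\gamma}{2}s(s-b)$. A secondary, more technical point is ensuring that the concave quadratic in $s$ does not exceed $\kappa_1^2a^2$ in the interior of $[0,a^2]$ rather than only at the endpoints; this monotonicity is exactly what the qualitative requirement "$a$ sufficiently large (depending on $L_1,L_4,\alpha,\beta,\gamma$)" secures.
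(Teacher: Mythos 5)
Your proposal is correct and follows essentially the same route as the paper's proof: the same expansion $\|\mN(Q)\|_F^2=\kappa_1^2\|Q\|_F^2+2\kappa_1(Q,\mN_{\kappa_2}(Q))_F+\|\mN_{\kappa_2}(Q)\|_F^2$, the same Young-type bound on the cubic term producing the constant $b$, the same Lipschitz bound $\|\mN_{\kappa_2}(Q)\|_F\le(\kappa_2+C_f)\|Q\|_F$, and the same reduction to a concave quadratic $g(s)$ that is shown to be nondecreasing on $[0,a^2]$ and then evaluated at $s=a^2$. The only cosmetic difference is that you locate the vertex of $g$ and invoke ``$a$ large enough,'' whereas the paper checks $g'(x)\ge 0$ by a discriminant computation in $\kappa_1$ using the padded constant $C_f=|\alpha|+|\beta|a+\gamma a^2+\gamma|b|$ --- these amount to the same verification.
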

\begin{proof}
	 Using  the compatibility of matrix norms, we have
\begin{align}
	\fnorm{Q^2} \leq  \|Q\|_F^2, \quad
	 Q^2: Q\leq  \|Q\|_F^3,  \quad
	Q^2: Q^2 \leq \|Q\|_F^4.\label{t12}
\end{align}
Considering the term $\beta\|Q\|_F^3$, we have
\begin{align}
	\beta\|Q\|_F^3 - \frac{\beta^2}{2\gamma}\|Q\|_F^2 - \frac{\gamma}{2}\|Q\|_F^4= -(\gamma/2)\|Q\|_F^2 (x-\beta/\gamma)^2 \le 0.\label{t13a}
\end{align}
 Based on the  inequalities \eqref{t12}, we can estimate the term $\beta Q^2:Q$ as follows:
\begin{align}
	\beta Q^2:Q \le |\beta| \|Q\|_F^3
\le \frac{\beta^2}{2\gamma}\|Q\|_F^2 + \frac{\gamma}{2}\|Q\|_F^4.\label{t15}
\end{align}
Then substituting  \eqref{t15} into $\mN_{\kappa_2}(Q): Q$ and using the definition of $b$, we obtain
	\begin{align}
		\mN_{\kappa_2}(Q): Q=&-(\alpha-{\kappa_2 }) \|Q\|_F^2+\beta Q^2: Q-\gamma \|Q\|_F^4\no\\
			 &\leq (\frac{\beta^2}{2\gamma}-(\alpha-{\kappa_2 })) \|Q\|_F^2 - \frac{\gamma}{2}\|Q\|_F^4\no\\
		&=\frac{\gamma}{2}\|Q\|_F^2 (\frac{\beta^2}{\gamma^2}-\frac{2(\alpha-{\kappa_2 })}{\gamma}-\|Q\|_F^2)\no\\&
		=\frac{\gamma}{2}\|Q\|_F^2 (b-\|Q\|_F^2)
		\leq\frac{\gamma b^2}{8}.\label{1.15a}
	\end{align}

Based on the  inequalities \eqref{t12} and the assumption $\|Q\|_F\leq a$, there exists a positive constant ${C_f=|\alpha| +|\beta| a+ \gamma a^2+\gamma |b|}$  such that
\begin{align}
	\|f(Q)\|_F&\leq (|\alpha| +|\beta| a+ \gamma a^2 ) \|Q\|_F\leq {C_f}\|Q\|_F,\label{t13b}\\
	\|\mN_{\kappa_2}(Q)\|_F&=\|\kappa_2 Q+f(Q)\|_F\leq ({C_f}+\kappa_2)\|Q\|_F.\label{t13c}
\end{align}
	Substituting \eqref{1.15a} and \eqref{t13c} into $\|\mN(Q)\|_F^2$, we obtain
\begin{align}
		\|\mN(Q)\|_F^2
	=&\|\kappa_1 Q\|_F^2+2\kappa_1\mN_{\kappa_2}(Q): Q+\|\mN_{\kappa_2}(Q)\|_F^2 \no \\\leq
&\|\kappa_1 Q\|_F^2+\kappa_1\gamma\|Q\|_F^2 (b-\|Q\|_F^2)+(\kappa_2+C_f)^2\|Q\|_F^2.\label{t16}
\end{align}
Then we estimate the right-hand side of \eqref{t16}.
Let $ x=\|Q\|_F^2,x \in [0,a^2]$,  and define a function $g(x)$ as
\begin{align*}
	g(x)=\kappa_1^2x+ \gamma\kappa_1 x(b-x)+(\kappa_2+C_f)^2 x.
\end{align*}
 Thus we have $g'(x)=\kappa_1^2+ \gamma\kappa_1(b-2x)+(\kappa_2+C_f)^2$, which is a quadratic function with respect to $\kappa_1$.
And the discriminant of $g'(x)$ can be calculated as
\begin{align*}
	\operatorname{Disc}(g'(x))=\gamma^2 (b-2x)^2-4(\kappa_2+C_f)^2.
\end{align*}
Here we consider  letting $g'(x)\geq 0$ to estimate the maximum of $g(x)$ on the interval $[0,a^2]$.
Using the conditions $b<a^2$ and $0\leq x\leq a^2$, we can drive
\begin{align}
	 \frac{ 1}{2 }\gamma|b-2x|\leq \frac{ 1}{2 }\gamma (|b|+2|x|)\leq \gamma |b|+\gamma a^2\leq \kappa_2+C_f.\label{t17}
\end{align}
Then we can get $\operatorname{Disc}(g'(x))\leq 0$ and $g'(x)\geq 0$.
Thus for $g(x)$, we have
\begin{align}
	\frac{1}{a^2}g(x)\leq \frac{1}{a^2}g(a^2)\leq\kappa_1^2+ \gamma\kappa_1(b-a^2)+(\kappa_2+C_f)^2. \label{1.1b}
\end{align}
Substituting the conditions $b<a^2,\kappa_1 \geq\frac{(\kappa_2+C_f)^2}{\gamma (a^2-b)}$ into \eqref{t17}, we can get
\begin{align}
	\kappa_1^2+ \gamma\kappa_1(b-a^2)+(\kappa_2+C_f)^2\leq \kappa_1^2.\label{t18}
\end{align}
Combining \eqref{t16} and \eqref{1.1b}, we can get
\begin{align*}
	\|\mN(Q)\|_F^2\leq g(x)\leq \kappa_1^2 a^2,
\end{align*}
which completes the proof.
\end{proof}
\subsection{Continuous maximum bound principle}~
\begin{theorem}[Existence, uniqueness and MBP of the continuous equation \eqref{1.8}]\label{thm2}
	Suppose that $L_1 \geq 0, L_1+L_4 \geq 0, \kappa_1 \geq\frac{(\kappa_2+C_f)^2}{\gamma (a^2-b)},\kappa_2 \geq \frac{ L_4^2}{2 L_1}, b<a^2$ where $a,b$ defined in Lemma \ref{3.2}.
	Then for any $Q_0 \in  \mathcal{Z} $, the equation \eqref{1.8}   has a unique solution $Q(t,\boldsymbol{x}) \in C([0,T]; \mathcal{Z})$ satisfying $\|Q(t,\boldsymbol{x})\|^2_{\mathcal{Z}}\leq a^2$.
\end{theorem}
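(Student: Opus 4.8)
The plan is to recast \eqref{1.8} in its mild (Duhamel) form and to run a Banach fixed-point argument inside the invariant ball
\begin{align*}
  B_a := \left\{ Q \in C([0,T^*];\mathcal{Z}) : \Norm{Q(t)}{\mathcal{Z}} \leq a \ \text{ for all } t \in [0,T^*] \right\},
\end{align*}
where $a$ is taken large enough (as in Lemma \ref{3.2}) and in particular so that $\Norm{Q_0}{\mathcal{Z}} \leq a$. First I would record the decay of the semigroup generated by the full linear operator $\mL$. Writing $\mL = \mL_{\kappa_2} - \kappa_1 I$ gives $e^{t\mL} = e^{-\kappa_1 t}\, e^{t\mL_{\kappa_2}}$, so the contraction property \eqref{1.17b} of Lemma \ref{3.1} upgrades to the exponential estimate $\Norm{e^{t\mL}W}{\mathcal{Z}} \leq e^{-\kappa_1 t}\,\Norm{W}{\mathcal{Z}}$ for all $t \geq 0$ and $W \in \mathcal{Z}$. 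The mild solution is then the fixed point of
\begin{align*}
  \Phi[Q](t) = e^{t\mL}Q_0 + \int_0^t e^{(t-\xi)\mL}\,\mN(Q(\xi))\,d\xi.
\end{align*}

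The core step, which is the MBP itself, is the invariance of $B_a$. For any $Q \in B_a$, Lemma \ref{3.2} provides the pointwise bound $\Norm{\mN(Q(\xi))}{\mathcal{Z}} \leq \kappa_1 a$; combining this with the decay estimate yields
\begin{align*}
  \Norm{\Phi[Q](t)}{\mathcal{Z}} \leq e^{-\kappa_1 t}\,a + \kappa_1 a \int_0^t e^{-\kappa_1(t-\xi)}\,d\xi = e^{-\kappa_1 t}\,a + a\left(1 - e^{-\kappa_1 t}\right) = a,
\end{align*}
so $\Phi$ maps $B_a$ into itself. Next I would show $\Phi$ is a contraction on $B_a$ for $T^*$ small: since $f$ is a fixed quadratic-plus-cubic polynomial in $Q$, it is Lipschitz on $\{\|Q\|_F \leq a\}$, so $\mN$ is Lipschitz on $B_a$ with a constant $L_{\mN}$ depending only on $a$ and the coefficients, whence
\begin{align*}
  \Norm{\Phi[Q_1](t) - \Phi[Q_2](t)}{\mathcal{Z}} \leq L_{\mN} \int_0^t e^{-\kappa_1(t-\xi)}\Norm{Q_1(\xi) - Q_2(\xi)}{\mathcal{Z}}\,d\xi \leq L_{\mN} T^* \sup_{[0,T^*]}\Norm{Q_1 - Q_2}{\mathcal{Z}}.
\end{align*}
Choosing $T^* < 1/L_{\mN}$ makes $\Phi$ a contraction, and Banach's theorem delivers a unique mild solution in $B_a$ on $[0,T^*]$, continuous in time. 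Because the a priori bound $\Norm{Q(t)}{\mathcal{Z}} \leq a$ is uniform and the contraction window $T^*$ depends only on $a$ and the coefficients, I would extend the solution by re-initializing at $T^*, 2T^*, \dots$ (the restarted data is again bounded by $a$) to reach all of $[0,T]$, producing the unique $Q \in C([0,T];\mathcal{Z})$ with $\Norm{Q(t)}{\mathcal{Z}}^2 \leq a^2$.

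The main subtlety lies in the invariance estimate rather than the fixed-point mechanics: the argument works only because the decay rate $\kappa_1$ from Lemma \ref{3.1} and the nonlinear bound $\kappa_1 a$ from Lemma \ref{3.2} share the same constant $\kappa_1$, so that the two contributions recombine exactly to the radius $a$. This matching is precisely what the structural hypotheses enforce: $\kappa_2 \geq L_4^2/(2L_1)$ controls the curl term (via Young's inequality) so that $\mL_{\kappa_2}$ is dissipative in the sup-norm, while $\kappa_1 \geq (\kappa_2 + C_f)^2/(\gamma(a^2-b))$ together with $b < a^2$ forces $\Norm{\mN(Q)}{\mathcal{Z}} \leq \kappa_1 a$. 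That these estimates hold in the $L^\infty$-based space $\mathcal{Z}$, and not merely in $L^2$, is what lets the conclusion be phrased as a genuine maximum bound principle in the sup-norm.
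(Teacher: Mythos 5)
Your proposal is correct and follows essentially the same route as the paper: the invariance estimate $\Norm{\Phi[Q](t)}{\mathcal{Z}}\le e^{-\kappa_1 t}a+a(1-e^{-\kappa_1 t})=a$, built from the splitting $\mL=\mL_{\kappa_2}-\kappa_1 I$ together with Lemma \ref{3.1} and Lemma \ref{3.2}, is exactly the paper's MBP argument. The only difference is that you spell out the Banach fixed-point iteration and the re-initialization to reach $[0,T]$, whereas the paper delegates the existence and uniqueness step to standard semigroup theory via a citation; your version is a legitimate and more self-contained rendering of the same proof.
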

\begin{proof}
    We firstly prove that the solution $Q(t,\boldsymbol{x})$ satisfies MBP.
    Taking the supremum norm $\|\cdot\|_{\mathcal{Z}}$ on both sides of \eqref{1.9a} for the interval $[0,t]$, and using Lemma \ref{3.1} and \ref{3.2}, we obtain
    \begin{align*}
        \|Q(t_{})\|_\mathcal{Z} &= \left\|e^{t\mL} Q(0)+\int_{0}^{t}e^{(t-\xi) \mL}\mN(Q(\xi))d\xi\right\|_\mathcal{Z}\\
        &= \left\|e^{t(\mL_{\kappa_2}-\kappa_1)} Q(0)+\int_{0}^{t}e^{(t-\xi)(\mL_{\kappa_2}-\kappa_1)}\mN(Q(\xi))d\xi\right\|_\mathcal{Z}\\
        &\leq e^{-\kappa_1t}\|Q(0)\|_\mathcal{Z} + \int_{0}^{t} e^{-\kappa_1(t-\xi)} \|\mN(Q(\xi))\|_\mathcal{Z} d\xi\\
        &\leq e^{-\kappa_1t}a + \int_{0}^{t} e^{-\kappa_1(t-\xi)} \kappa_1 a d\xi\\
        &\leq e^{-\kappa_1t}a + \left(\frac{1}{\kappa_1} - \frac{1}{\kappa_1}e^{-\kappa_1t}\right)\kappa_1 a = a,
    \end{align*}
    which verify that $\|Q(t_{})\|_\mathcal{Z}^{2}\leq a^2$ holds for $t>0$.

    Using Lemma \ref{3.1} and \ref{3.2} and the above MBP analysis, we can see that the linear operator $\mL$ generates a strongly continuous semigroup on $\mathcal{Z}$ and the nonlinear operator $\mN$ satisfies the Lipschitz condition on $\mathcal{Z}$.
    Then according to the standard semigroup theory, we can prove the existence and uniqueness of the solution $Q(t,\boldsymbol{x}) \in C([0,T]; \mathcal{Z})$ of \eqref{1.8}. More details can referred to \cite{liu2024maximum}[Theorem 2.5].
\end{proof}
\subsection{Discrete maximum bound principle}~
\begin{theorem}[MBP of ETD schemes]\label{mbp}
For any $Q_0 \in \mathcal{Z} $ satisfing $\|Q_0\|^2_{\mathcal{Z}}\leq a^2$, when $\kappa_1 \geq\frac{(\kappa_2+C_f)^2}{\gamma (a^2-b)},\kappa_2 \geq \frac{ L_4^2}{2 L_1}, b< a^2$, the solution $Q_{m}$ generated by the ETD1 scheme \eqref{etd1} and  ETDRK2 scheme \eqref{etd2} satisfies
	\begin{align}
		\| Q_{m}\|_{\mathcal{Z}}^2\leq a^2, \qquad m \geq 0.
	\end{align}
\end{theorem}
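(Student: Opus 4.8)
The plan is to prove the bound $\|Q_m\|_{\mathcal{Z}}^2 \le a^2$ by induction on $m$, reusing the exact mechanism that drives the continuous estimate in Theorem \ref{thm2}. The base case $m=0$ is the hypothesis $\|Q_0\|_{\mathcal{Z}}^2 \le a^2$. For the inductive step I would assume $\|Q_m\|_{\mathcal{Z}} \le a$ and show the same bound holds one step later. The two structural facts I would lean on are the factorization $e^{t\mL}=e^{-\kappa_1 t}\,e^{t\mL_{\kappa_2}}$, which follows from the decomposition $\mL=\mL_{\kappa_2}-\kappa_1 I$ in \eqref{Lkappa} together with the contraction property $\|e^{t\mL_{\kappa_2}}W\|_{\mathcal{Z}}\le\|W\|_{\mathcal{Z}}$ of Lemma \ref{3.1}; and the pointwise nonlinear bound of Lemma \ref{3.2}, which guarantees $\|\mN(Q)\|_{\mathcal{Z}}\le\kappa_1 a$ whenever $\|Q\|_{\mathcal{Z}}\le a$.

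For the ETD1 scheme \eqref{etd1} I would take the $\|\cdot\|_{\mathcal{Z}}$ norm of the update and split it via the triangle inequality into a semigroup part and an integral part. Inserting the factorization and applying Lemma \ref{3.1} to each factor $e^{(\tau-\xi)\mL_{\kappa_2}}$ yields
\begin{align*}
\|Q_{m+1}\|_{\mathcal{Z}} \le e^{-\kappa_1\tau}\|Q_m\|_{\mathcal{Z}} + \int_0^\tau e^{-\kappa_1(\tau-\xi)}\|\mN(Q_m)\|_{\mathcal{Z}}\,d\xi.
\end{align*}
Since $\|Q_m\|_{\mathcal{Z}}\le a$, Lemma \ref{3.2} gives $\|\mN(Q_m)\|_{\mathcal{Z}}\le\kappa_1 a$, and evaluating $\int_0^\tau e^{-\kappa_1(\tau-\xi)}\,d\xi=\frac{1}{\kappa_1}(1-e^{-\kappa_1\tau})$ collapses the right-hand side to $e^{-\kappa_1\tau}a+(1-e^{-\kappa_1\tau})a=a$. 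This is precisely the discrete analogue of the continuous computation in Theorem \ref{thm2}, with the interval $[0,\tau]$ in place of $[0,t]$.

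For the ETDRK2 scheme \eqref{etd2} I would proceed in two stages. The intermediate value $\widetilde{Q}_{m+1}$ is computed by exactly the ETD1 formula, so the paragraph above already gives $\|\widetilde{Q}_{m+1}\|_{\mathcal{Z}}\le a$. The key observation for the corrector is that the weights multiplying $\mN(Q_m)$ and $\mN(\widetilde{Q}_{m+1})$, namely $1-\tfrac{\xi}{\tau}$ and $\tfrac{\xi}{\tau}$, are nonnegative for $\xi\in[0,\tau]$ and sum to one. Hence for each $\xi$ the convex combination satisfies $\|(1-\tfrac{\xi}{\tau})\mN(Q_m)+\tfrac{\xi}{\tau}\mN(\widetilde{Q}_{m+1})\|_{\mathcal{Z}}\le(1-\tfrac{\xi}{\tau})\kappa_1 a+\tfrac{\xi}{\tau}\kappa_1 a=\kappa_1 a$, applying Lemma \ref{3.2} to both arguments. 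The same triangle-inequality-plus-integral estimate as in the ETD1 case then gives $\|Q_{m+1}\|_{\mathcal{Z}}\le a$, closing the induction.

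I expect the only genuinely delicate point to be the convexity structure of the corrector weights in ETDRK2: preservation of the bound $\kappa_1 a$ through the second stage hinges on both coefficients remaining nonnegative on $[0,\tau]$, so that the triangle inequality does not inflate the constant. Everything else is a mechanical transcription of the contraction estimate (Lemma \ref{3.1}) and the nonlinear bound (Lemma \ref{3.2}); crucially, no smallness condition on $\tau$ enters, which is exactly what renders the MBP unconditional.
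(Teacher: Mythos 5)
Your proposal is correct and follows essentially the same route as the paper: induction on $m$, the splitting $\mL=\mL_{\kappa_2}-\kappa_1 I$ combined with the contraction property of Lemma \ref{3.1} and the nonlinear bound of Lemma \ref{3.2}, and for ETDRK2 the observation that $\widetilde{Q}_{m+1}$ is an ETD1 step together with the convex combination of the two nonlinear evaluations. No substantive difference from the paper's argument.
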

\begin{proof}
	Suppose that $\|Q_{m}\|_F^2\leq a^2$, we will only need to show that $\|Q_{m+1}\|_F^2\leq a^2$ holds.
	Taking the supremum norm $\|\cdot\|_{\mathcal{Z}}$ on both sides of the ETD1 scheme \eqref{etd1}  and using Lemma \ref{3.1} and \ref{3.2}, we obtain
	\begin{align*}
\|Q_{m+1}\|_\mathcal{Z} &= \left\|e^{\tau\mL}Q_{m} + \int_0^\tau e^{(\tau-s)\tau\mL} \mathcal{N}[Q_{m}]\mathrm{d}s\right\|_\mathcal{Z}\\
&= \left\|e^{\tau(\mL_{\kappa_2}-\kappa_1)}Q_{m} + \int_0^\tau e^{(\tau-s)(\mL_{\kappa_2}-\kappa_1)} \mathcal{N}[Q_{m}]\mathrm{d}s\right\|_\mathcal{Z}\\
&\leq e^{-\kappa_1\tau}a + \left(\frac{1}{\kappa_1} - \frac{1}{\kappa_1}e^{-\kappa_1\tau}\right)\kappa_1 a = a.
	\end{align*}

	Taking  $\|\cdot\|_{\mathcal{Z}}$ on both sides of the ETDRK2 scheme \eqref{etd2} and using Lemma \ref{3.1} and \ref{3.2}, we obtain
	 \begin{align*}
\|Q_{m+1}\|_\mathcal{Z} &= \left\|e^{\tau(\mL_{\kappa_2}-\kappa_1)}Q_{m} + \int_0^\tau e^{(\tau-s)(\mL_{\kappa_2}-\kappa_1)} \left(\left(1-\frac{s}{\tau}\right)\mathcal{N}[Q_{m}] + \frac{s}{\tau}\mathcal{N}[\tilde{Q}_{m+1}]\right)\mathrm{d}s\right\|_\mathcal{Z}\\&
\leq e^{-\kappa_1\tau}a + \int_0^\tau e^{-\kappa_1(\tau-s)} \left(\left(1-\frac{s}{\tau}\right)\kappa_1 a + \frac{s}{\tau}\kappa_1 a\right)\mathrm{d}s\\&
\leq e^{-\kappa_1\tau}a + \left(\frac{1}{\kappa_1} - \frac{1}{\kappa_1}e^{-\kappa_1\tau}\right)\kappa_1 a = a,
	 \end{align*}
	  which verify that $\|Q_{m}\|_\mathcal{Z}^{2}\leq a^2$ holds for $m \geq 0$. The proof is complete.
\end{proof}
\begin{remark}
    Letting $|\cdot|_2$ denote the matrix 2-norm, we can derive
    \begin{align}
        |Q|_2 = \max_{i=1,2,3} \lambda_i (Q)\leq \|Q\|_F, \quad	\max_{\boldsymbol{x} \in \bar{\Omega}}|Q|_2\leq \Norm{Q}{\mathcal{Z}},\label{1.25ba}
    \end{align}
    where $\lambda_i (Q)$ denotes the eigenvalues of matrix $Q$,
which yields when the $\Norm{ }{\mathcal{Z}}$ norm of $Q$ satisfies the MBP, the matrix 2-norm of $Q$ also satisfies.
 \end{remark}
 \subsection{Discrete energy dissipation}~\\
In this section, we prove the unconditional energy dissipation of the ETD schemes \eqref{etd1} and \eqref{etd2}.
Using  the norm and inner product of $\mathcal{X}$, we  express the energy functional $E[Q]$ as follows:
\begin{alignat}{2}
	E[Q]= \frac{L_1}{2}\|\nabla Q\|_{\mathcal{X}}^2+\frac{L_4}{2}(Q,\nabla \times Q)_{\mathcal{X}}+\frac{\alpha}{2} \| Q\|_{\mathcal{X}}^2-\frac{\beta}{3} (Q,Q^2)_{\mathcal{X}}+\frac{\gamma}{4} \|Q\|_{\mathcal{X}}^4. \label{discrete_energy}
\end{alignat}
\begin{lemma}\label{3.4}
	 For any $Q_1, Q_2 \in \mathcal{Z}$ satisfing $\|Q_1\|_F\leq a, \|Q_2\|_F \leq a$, when $\kappa_1\geq |\alpha|+|\beta| a^2+2 \gamma a^2$, there holds
	 \begin{align*}
		 (F_b(Q_1) - F_b(Q_2), I)_F +  (Q_1 - Q_2, f(Q_2))_F  \leq \kappa_1\|Q_1 - Q_2\|_F^2,
	 \end{align*}
where $F_b(Q) = \frac{\alpha}{2} \| Q\|_{F}^2-\frac{\beta}{3} (Q,Q^2)+\frac{\gamma}{4} \|Q\|_{F}^4$.
\end{lemma}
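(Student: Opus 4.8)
The plan is to recognize that $f$ is, up to sign, the variational gradient of the bulk density $F_b$, so that the left-hand side is precisely the second-order Taylor remainder of $F_b$ at $Q_2$; a uniform bound on the Hessian of $F_b$ over the ball $\{\|Q\|_F\le a\}$ then delivers the claim. A direct differentiation gives $-\nabla_Q F_b(Q)=-\alpha Q+\beta(Q^2-\tfrac13\operatorname{tr}(Q^2)I)-\gamma\operatorname{tr}(Q^2)Q=f(Q)$, where the traceless projection term $-\tfrac13\operatorname{tr}(Q^2)I$ is invisible to the Frobenius pairing with the increment $H:=Q_1-Q_2$ (itself traceless). Reading the first term as the scalar density difference $F_b(Q_1)-F_b(Q_2)$, I would therefore rewrite the quantity to be estimated as
\[
F_b(Q_1)-F_b(Q_2)+(H,f(Q_2))_F=F_b(Q_1)-F_b(Q_2)-(\nabla F_b(Q_2),H)_F .
\]

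First I would expand this term by term in the three homogeneous pieces of $F_b$ (degree two in $\alpha$, three in $\beta$, four in $\gamma$). The quadratic piece collapses exactly to $\tfrac{\alpha}{2}\|H\|_F^2$. The cubic piece, after the constant- and linear-in-$H$ contributions cancel, reduces to $-\beta\operatorname{tr}(Q_2H^2)-\tfrac{\beta}{3}\operatorname{tr}(H^3)$; this is the only slightly delicate bookkeeping and is cleanest through the identity $\operatorname{tr}((Q+H)^3)=\operatorname{tr}(Q^3)+3\operatorname{tr}(Q^2H)+3\operatorname{tr}(QH^2)+\operatorname{tr}(H^3)$ together with $\operatorname{tr}(Q^3)=(Q,Q^2)$. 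The quartic piece reduces to a purely norm-dependent remainder in $\|Q_1\|_F^2$, $\|Q_2\|_F^2$ and $Q_1{:}Q_2$.

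Then I would estimate the surviving terms by submultiplicativity of the Frobenius norm (the compatibility already used in \eqref{t12}), Cauchy–Schwarz, and the a priori bounds $\|Q_1\|_F,\|Q_2\|_F\le a$, which also give $\|H\|_F\le 2a$ and, by convexity of the ball, keep the whole segment from $Q_2$ to $Q_1$ inside $\{\|Q\|_F\le a\}$. Each term is controlled by a constant times $\|H\|_F^2$: the quadratic by $\tfrac{|\alpha|}{2}\|H\|_F^2$, the cubic by $|\beta|\,\|Q_2+\tfrac13H\|_F\,\|H\|_F^2\le\tfrac53|\beta|a\|H\|_F^2$, and the quartic by $\tfrac32\gamma a^2\|H\|_F^2$. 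Summing and comparing with the stated threshold $\kappa_1\ge|\alpha|+|\beta|a^2+2\gamma a^2$—a deliberately loose but clean sufficient bound, harmless since $a$ is taken large in Lemma \ref{3.2}—closes the estimate.

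The main obstacle I anticipate is not any single inequality but forcing the accumulated constant below $\kappa_1$: the term $\operatorname{tr}(H^3)$ is genuinely cubic in $H$ and can only be absorbed by spending one factor $\|H\|_F\le 2a$, while the quartic remainder likewise requires the ball bound rather than a naive pointwise estimate. An equivalent and perhaps cleaner route is to skip the explicit expansion and write the remainder in integral Taylor form $\int_0^1(1-s)\,(H,\nabla^2F_b(Q_2+sH)H)_F\,ds$, compute
\[
\nabla^2F_b(Q)[H,H]=\alpha\|H\|_F^2-2\beta\operatorname{tr}(QH^2)+2\gamma(Q{:}H)^2+\gamma\|Q\|_F^2\|H\|_F^2,
\]
and bound this uniformly by $2\kappa_1\|H\|_F^2$ on $\{\|Q\|_F\le a\}$. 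Either way, the crux is the uniform Hessian bound.
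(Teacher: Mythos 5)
Your proposal is correct and takes essentially the same route as the paper: both expand $F_b(Q_1)-F_b(Q_2)+(Q_1-Q_2,f(Q_2))_F$ into terms that are at least quadratic in $H=Q_1-Q_2$ and bound each piece via Frobenius-norm compatibility and the a priori bound $\|Q_i\|_F\le a$, which is precisely a uniform second-derivative bound for $F_b$ on the ball of radius $a$. Your Taylor-remainder/Hessian framing is a cleaner organization of the same computation, and your constants ($\tfrac{|\alpha|}{2}+\tfrac53|\beta|a+\tfrac32\gamma a^2$) do fall below the stated threshold $\kappa_1\ge|\alpha|+|\beta|a^2+2\gamma a^2$ under the paper's standing assumption that $a$ is large (in particular $a\ge 2$ suffices), so there is no gap.
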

\begin{proof}
	Based on the definition of $F_b(Q)$ and  $f(Q)$, we can derive the following equality:
	\begin{align}
		( F_b(Q_1) - F_b(Q_2), I )_F &= \frac{\alpha}{2} \| Q_1\|_{F}^2-\frac{\alpha}{2} \| Q_2\|_{F}^2-\frac{\beta}{3} (Q_1,Q_1^2)_F + \frac{\beta}{3} (Q_2,Q_2^2)_F + \frac{\gamma}{4} \|Q_1\|_{F}^4 - \frac{\gamma}{4} \|Q_2\|_{F}^4\no\\
		&= \frac{\alpha}{2} (\| Q_1\|_{F}^2-\| Q_2\|_{F}^2)-\frac{\beta}{3} ((Q_1-Q_2),Q_1^2+(Q_1+Q_2)Q_2)_F + \frac{\gamma}{4} (\|Q_1\|_{F}^4-\|Q_2\|_{F}^4)\no\\
		&= \alpha (Q_1,Q_1-Q_2)_F-\frac{\beta}{3} ((Q_1-Q_2),Q_1^2+(Q_1+Q_2)Q_2)_F \no\\
		&\quad+ \frac{\gamma}{2} (\|Q_1\|_{F}^2+\|Q_2\|_{F}^2)(Q_1,Q_1-Q_2)_F,\label{t19}\\
		(Q_1 - Q_2, f(Q_2))_F &= -\alpha (Q_1 - Q_2, Q_2)_F + \beta (Q_1 - Q_2, Q_2^2 - \frac{1}{3}\text{tr}(Q_2^2)I)_F + \gamma \text{tr}(Q_2^2)(Q_1 - Q_2, Q_2)_F\no\\
		&= -\alpha (Q_1 - Q_2, Q_2)_F + \beta (Q_1 - Q_2, Q_2^2)_F  -\gamma \text{tr}(Q_2^2)(Q_1 - Q_2, Q_2)_F.\label{t20}
	\end{align}
	 Using $\kappa_1 \geq |\alpha|+|\beta| a^2+2 \gamma a^2$ and Lemma \ref{mbp}, and adding \eqref{t19}-\eqref{t20} together, we can obtain
	\begin{align*}
		&\quad( F_b(Q_1) - F_b(Q_2), I )_F +  (Q_1 - Q_2, f(Q_2))_F \\
		&= \alpha (Q_1 - Q_2,Q_1 - Q_2)_F +\frac{|\beta|}{3}(Q_1 - Q_2, Q_1^2 + Q_2^2+Q_1Q_2-3Q_2^2)_F\\
		&\quad+\frac{\gamma}{2} (\|Q_1\|_{F}^2Q_1+\|Q_2\|_{F}^2Q_1-2\|Q_2\|_{F}^2Q_2,Q_1-Q_2)_F \\
		&\leq ((Q_1 - Q_2)(Q_1 - Q_2), \alpha I+\frac{|\beta|}{3}(Q_1 +2 Q_2)_F+\frac{\gamma}{2}(\|Q_2\|_{F}^2+\|Q_1\|_{F}^2+2Q_1Q_2))_F	\\
		&\leq (|\alpha|+|\beta| a^2+2\gamma a^2) \|Q_1-Q_2\|_{F}^2
		\leq {\kappa_1} \|Q_1-Q_2\|_{F}^2,
		\end{align*}
		which completes the proof.
\end{proof}
		\begin{lemma}\label{3.5}
	 For any $Q_1, Q_2 \in \mathcal{X}, L_1>0$, when $\kappa_2\geq \frac{L_4^2}{2L_1}$, there holds
	 \begin{align}
		 (F_e(Q_1) - F_e(Q_2), I)_{\mathcal{X}} +  (Q_1 - Q_2, \mL_0(Q_1))_{\mathcal{X}}  \leq \kappa_2 \|Q_1-Q_2\|_{\mathcal{X}}^2,\label{t21}
	 \end{align}
	 where $F_e(Q) =\frac{L_1}{2}\|\nabla Q\|_{\mathcal{X}}^2+\frac{L_4}{2} \innerx{Q}{\nabla \times Q}$,  $\mL_0(Q) =L_1 \Delta Q- \frac{L_4}{2} (\nabla \times Q + (\nabla \times Q)^T)$.
\end{lemma}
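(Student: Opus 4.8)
The plan is to exploit the fact that $F_e$ is a \emph{quadratic} functional, so that the left-hand side of \eqref{t21} collapses, via polarization, to a single quadratic form evaluated at the difference $W:=Q_1-Q_2$. Concretely, I would introduce the symmetric bilinear form $B(P,R):=L_1\innerx{\nabla P}{\nabla R}+L_4\innerx{P}{\nabla\times R}$, so that $F_e(Q)=\tfrac12 B(Q,Q)$, and I would check that $\innerx{R}{\mL_0 P}=-B(P,R)$. Granting the symmetry $B(P,R)=B(R,P)$, the left-hand side of \eqref{t21} becomes
\begin{align*}
F_e(Q_1)-F_e(Q_2)+\innerx{Q_1-Q_2}{\mL_0 Q_1}
&=\tfrac12 B(Q_1,Q_1)-\tfrac12 B(Q_2,Q_2)-B(Q_1,Q_1-Q_2)\\
&=-\tfrac12 B(W,W)=-F_e(W),
\end{align*}
so the claim reduces to the coercivity estimate $-F_e(W)\le \kappa_2\|W\|_{\mathcal{X}}^2$.

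The symmetry $B(P,R)=B(R,P)$ is the one structural point that requires care, and I would establish it by integration by parts under the periodic boundary condition. The $\nabla$-term is manifestly symmetric; for the curl term I would show $\innerx{P}{\nabla\times R}=\innerx{\nabla\times P}{R}$ by moving the derivative $\partial_k$ off $R$ (no boundary contribution by periodicity) and relabelling the summation indices $i\leftrightarrow l$, using the antisymmetry $\varepsilon_{lki}=-\varepsilon_{ikl}$ of the Levi-Civita symbol. I must also handle the transposed curl appearing in $\mL_0$: since every tensor in $\mathcal{X}$ is symmetric, $\innerx{(\nabla\times Q)^T}{P}=\innerx{\nabla\times Q}{P}$, which lets me combine $\tfrac{L_4}{2}(\nabla\times Q+(\nabla\times Q)^T)$ into the single factor $L_4\,\nabla\times Q$ inside the pairing. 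This symmetrization, together with the index bookkeeping for the matrix curl, is the only genuinely delicate part of the argument; everything downstream is mechanical. I expect this self-adjointness/symmetrization step to be the main obstacle.

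For the remaining coercivity estimate I would expand $-F_e(W)=-\tfrac{L_1}{2}\|\nabla W\|_{\mathcal{X}}^2-\tfrac{L_4}{2}\innerx{W}{\nabla\times W}$ and bound the cross term by Cauchy--Schwarz, followed by the pointwise inequality $\|\nabla\times W\|_F^2\le 2\|\nabla W\|_F^2$ integrated over $\Omega$ (exactly the bound used in Lemma \ref{3.1}) and Young's inequality, giving
\begin{align*}
-\frac{L_4}{2}\innerx{W}{\nabla\times W}\le \frac{|L_4|}{2}\,\|W\|_{\mathcal{X}}\,\|\nabla\times W\|_{\mathcal{X}}
\le |L_4|\,\epsilon\,\|\nabla W\|_{\mathcal{X}}^2+\frac{|L_4|}{8\epsilon}\,\|W\|_{\mathcal{X}}^2.
\end{align*}
Choosing $\epsilon=\tfrac{L_1}{2|L_4|}$ renders the coefficient of $\|\nabla W\|_{\mathcal{X}}^2$ nonpositive and leaves the constant $\tfrac{L_4^2}{4L_1}$ in front of $\|W\|_{\mathcal{X}}^2$. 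Since the hypothesis $\kappa_2\ge \tfrac{L_4^2}{2L_1}$ is even stronger (and consistent with the constant appearing in Lemma \ref{3.1}), the inequality \eqref{t21} follows at once, completing the proof.
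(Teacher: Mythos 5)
Your proposal is correct and follows essentially the same route as the paper: both arguments reduce the left-hand side to $-F_e(Q_1-Q_2)=-\tfrac{L_1}{2}\|\nabla(Q_1-Q_2)\|_{\mathcal{X}}^2-\tfrac{L_4}{2}\bigl(Q_1-Q_2,\nabla\times(Q_1-Q_2)\bigr)_{\mathcal{X}}$ and then close with the same Young-inequality plus curl--gradient bound under $\kappa_2\ge \tfrac{L_4^2}{2L_1}$. Your polarization packaging via the symmetric bilinear form $B(\cdot,\cdot)$ is a tidier rendering of the paper's explicit term-by-term integration by parts (its $I_1$ and $I_2$), and the two delicate points you flag --- self-adjointness of the matrix curl under periodicity and absorbing the transposed curl using the symmetry of the tensors --- are exactly the steps the paper uses as well.
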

\begin{proof}
     Let $\delta Q = Q_1 - Q_2$.
Based on the definition of $F_e(Q)$ and  $\mL_0(Q)$, we can derive the following equality of the left-hand side of \eqref{t21}:
\begin{align}
&\quad ( F_e(Q_1) - F_e(Q_2), I )_{\mathcal{X}} +  (Q_1 - Q_2, \mL_0(Q_1))_{\mathcal{X}} \no\\
&= \frac{L_1}{2} \left( \|\nabla Q_1\|_{\mathcal{X}}^2 - \|\nabla Q_2\|_{\mathcal{X}}^2 \right) + \frac{L_4}{2} \left( (Q_1,\nabla \times Q_1)_{\mathcal{X}} - (Q_2,\nabla \times Q_2)_{\mathcal{X}} \right)\no \\
&\quad + L_1 (Q_1 - Q_2, \Delta Q_1)_{\mathcal{X}} - \frac{L_4}{2} (Q_1 - Q_2, \nabla \times Q_1 + (\nabla \times Q_1)^T)_{\mathcal{X}}\no\\
&:= I_1 + I_2 ,\label{t22}
\end{align}
where
\begin{align*}
I_1 &= \frac{L_1}{2} \left( \|\nabla Q_1\|_{\mathcal{X}}^2 - \|\nabla Q_2\|_{\mathcal{X}}^2 \right)+L_1 (\delta Q, \Delta Q_1)_{\mathcal{X}}, \\
I_2 &= \frac{L_4}{2} \left( (Q_1,\nabla \times Q_1)_{\mathcal{X}} - (Q_2,\nabla \times Q_2)_{\mathcal{X}}  - (\delta Q, \nabla \times Q_1 + (\nabla \times Q_1)^T)_{\mathcal{X}}\right).
\end{align*}
 For $I_1,$ using integration by parts and the periodic boundary condition, we have
\begin{align}
I_1 &= \frac{L_1}{2} \left(  (Q_2, \Delta Q_2)_{\mathcal{X}} -(Q_1, \Delta Q_1)_{\mathcal{X}} \right) +L_1 (\delta Q, \Delta Q_1)_{\mathcal{X}} \no \\
&= \frac{L_1}{2} \left( (Q_1 - \delta Q, \Delta Q_1 - \Delta\delta Q)_{\mathcal{X}} - (Q_1, \Delta Q_1)_{\mathcal{X}} \right) +L_1 (\delta Q, \Delta Q_1)_{\mathcal{X}}\no\\
&= \frac{L_1}{2} \left( -2(\delta Q, \Delta Q_1)_{\mathcal{X}} - \|\nabla\delta Q\|_{\mathcal{X}}^2 \right) +L_1 (\delta Q, \Delta Q_1)_{\mathcal{X}}=  - \frac{L_1}{2} \|\nabla\delta Q\|_{\mathcal{X}}^2.\label{t23a}
\end{align}
 For $I_2$, using integration by parts and the symmetry of $Q_1$ and $Q_2$, we have
\begin{align}
I_2 &= \frac{L_4}{2} \left( \innerx{Q_1}{\gradcurl{Q_1}}  - \innerx{Q_2}{\gradcurl{Q_2}} - \inner{\delta Q}{\gradcurl{Q_1} + (\gradcurl{Q_1})^T}_{\mathcal{X}}\right)  \no\\
%
%
&=\frac{L_4}{2} \left( \innerx{Q_1}{\gradcurl{Q_1}} - \innerx{Q_1-\delta Q}{\gradcurl{(Q_1-\delta Q)}}   - \inner{\delta Q}{\gradcurl{Q_1} + (\gradcurl{Q_1})^T}_{\mathcal{X}} \right) \no\\
%
%
&=\frac{L_4}{2} \left( (Q_1,\gradcurl{\delta Q})_{\mathcal{X}} - \inner{\delta Q}{(\gradcurl{Q_1})^T}_{\mathcal{X}} - \inner{\delta Q}{\gradcurl{\delta Q}}_{\mathcal{X}}\right)= - \frac{L_4}{2}\inner{\delta Q}{\gradcurl{\delta Q}}_{\mathcal{X}}.\label{t23b}
\end{align}
Based on the vector-calculus identity, we have
\begin{align}
	\|\nabla  \delta Q\|_{\mathcal{X}}^2=\|\nabla \times \delta Q\|_{\mathcal{X}}^2+\|\nabla \cdot \delta Q\|_{\mathcal{X}}^2.\label{t24}
\end{align}
Using Young's inequality and the vector-calculus identity \eqref{t24}, for $I_2$ we can derive
\begin{align}
 I_2
= -\frac{L_4}{2}  ( \delta Q, \nabla\times \delta Q )_{\mathcal{X}}\leq \frac{|L_4| \epsilon}{4}\|\nabla \times \delta Q\|_{\mathcal{X}}^2+\frac{|L_4| }{4\epsilon} \| \delta Q\|_{\mathcal{X}}^2\leq \frac{|L_4| \epsilon}{4}\|\nabla \delta Q\|_{\mathcal{X}}^2+\frac{|L_4| }{4\epsilon} \| \delta Q\|_{\mathcal{X}}^2.\label{t23}
\end{align}
Substituting \eqref{t23a} and \eqref{t23}  into \eqref{t22}, we obtain
\begin{align*}
	(F_e(Q_1) - F_e(Q_2), I)_{\mathcal{X}} +  (\delta Q, \mL_0(Q_2))_{\mathcal{X}}
	\leq \frac{|L_4|}{4\epsilon}\| \delta Q\|_{\mathcal{X}}^2+
	  (\frac{-L_1}{2} +\frac{|L_4| \epsilon}{4})\|\nabla  \delta Q\|_{\mathcal{X}}^2.
\end{align*}
Letting $\frac{-L_1}{2} +\frac{|L_4| \epsilon}{4}\leq 0$ and using $\kappa_2 \geq \frac{L_4^2}{2L_1}$, we can choose a constant $\epsilon$ such that
\begin{align*}
	\epsilon \leq \frac{2L_1}{|L_4|},\quad
 \frac{L_4^2}{8L_1} \leq \frac{|L_4|}{4\epsilon} \leq \kappa_2,
\end{align*}
 which completes the proof.
\end{proof}
	\begin{theorem}[Unconditional Energy Dissipation]\label{energy_stability}
	For any $Q_0 \in \mathcal{X}\cap \mathcal{Z}$ satisfing $\|Q_0\|^2_{\mathcal{Z}}\leq a^2$,   let $\{Q_m\}_{m\ge 0}$ be the numerical solution generated by  the ETD1 scheme \eqref{etd1} and the ETDRK2 scheme \eqref{etd2}. When $\kappa_1\geq \max\{\frac{(\kappa_2+C_f)^2}{\gamma (a^2-b)},|\alpha|+|\beta| a^2+2 \gamma a^2\}$ and $\kappa_2\geq \frac{L_4^2}{2L_1}$, for $m \geq 0$, we have  the energy dissipation law:
	\begin{align}
		E(Q_{m+1})\leq E(Q_{m}).\no
	\end{align}
	\end{theorem}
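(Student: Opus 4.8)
The plan is to reduce both schemes to one common energy inequality and then finish by spectral calculus, treating ETDRK2 as the genuinely harder case. First, by Theorem~\ref{mbp} every iterate generated by either scheme, including the intermediate stage $\widetilde{Q}_{m+1}$ of ETDRK2, obeys the pointwise bound $\|\cdot\|_F\leq a$, so the hypotheses of Lemmas~\ref{3.4} and~\ref{3.5} hold along the whole trajectory. Splitting the energy into the elastic part governed by Lemma~\ref{3.5} and the bulk part governed by the integrated form of Lemma~\ref{3.4}, I apply both with $Q_1=Q_{m+1}$, $Q_2=Q_m$ and $v:=Q_{m+1}-Q_m$, and add the two estimates. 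Using $\mL_0 Q_{m+1}=\mL_0 Q_m+\mL_0 v$ and setting $g:=\mL_0 Q_m+f(Q_m)$, the stabilization term $(\kappa_1+\kappa_2)\|v\|_{\mathcal{X}}^2$ combines with $-(v,\mL_0 v)_{\mathcal{X}}$ into $-(v,\mL v)_{\mathcal{X}}$, where $\mL=\mL_0-(\kappa_1+\kappa_2)\mathrm{I}$, yielding the scheme-independent bound
\[
E(Q_{m+1})-E(Q_m)\leq -(v,g)_{\mathcal{X}}-(v,\mL v)_{\mathcal{X}},
\]
which is valid for either scheme since it involves $Q_{m+1}$ only through $v$.

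Next I set up the spectral framework. On the periodic space $\mathcal{X}$ the Laplacian is self-adjoint, and the symmetrized curl is the gradient of the symmetric bilinear form $(Q,\nabla\times P)_{\mathcal{X}}=(P,\nabla\times Q)_{\mathcal{X}}$ (integration by parts plus antisymmetry of the Levi-Civita symbol), so $\mL_0$, and hence $\mL$, is self-adjoint. Integrating the pointwise dissipativity established in Lemma~\ref{3.1} gives $\mL_{\kappa_2}=\mL_0-\kappa_2\mathrm{I}\preceq 0$, so that $\mL=\mL_{\kappa_2}-\kappa_1\mathrm{I}\preceq-\kappa_1\mathrm{I}\prec 0$. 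I also record the identity $\varphi_0(z)=1+z\varphi_1(z)$.

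For ETD1, the relation $\varphi_0(\tau\mL)-\mathrm{I}=\tau\mL\varphi_1(\tau\mL)$ lets me write the increment as $v=\tau\varphi_1(\tau\mL)g$. Diagonalizing $\mL$ through its spectral measure, with all spectral values $\mu\leq-\kappa_1$, each mode contributes
\[
-\tau\,\varphi_1(\tau\mu)\bigl[1+\tau\mu\,\varphi_1(\tau\mu)\bigr]|g_\mu|^2=-\tau\,\varphi_1(\tau\mu)\,\varphi_0(\tau\mu)\,|g_\mu|^2\leq 0,
\]
because $\varphi_1>0$ and $\varphi_0(\tau\mu)=e^{\tau\mu}>0$; integrating over the spectrum gives $E(Q_{m+1})\leq E(Q_m)$.

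The hard part is ETDRK2, where $v=u+p$ with $u=\tau\varphi_1(\tau\mL)g$ the ETD1 increment and $p=\tau\varphi_2(\tau\mL)w$, $w:=\mN(\widetilde{Q}_{m+1})-\mN(Q_m)$, $\widetilde{Q}_{m+1}-Q_m=u$. Expanding the common bound and using self-adjointness of $\mL$, the right-hand side splits as $[-(u,g)_{\mathcal{X}}-(u,\mL u)_{\mathcal{X}}]-(p,g)_{\mathcal{X}}-2(p,\mL u)_{\mathcal{X}}-(p,\mL p)_{\mathcal{X}}$; the bracket is the nonpositive ETD1 quantity, but $-(p,\mL p)_{\mathcal{X}}\geq\kappa_1\|p\|_{\mathcal{X}}^2$ carries the wrong sign and the cross terms are indefinite. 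The obstacle is that $w$ is a genuinely nonlinear object, not diagonal in the eigenbasis of $\mL$, so the clean mode-by-mode cancellation of ETD1 is unavailable. The plan is to control $w$ by the first stage through the Lipschitz continuity of $\mN$ on the ball $\{\|\cdot\|_F\leq a\}$ (valid by the MBP), giving $\|w\|_{\mathcal{X}}\lesssim\|u\|_{\mathcal{X}}$, and then to bound the $p$-dependent terms by operator estimates relating $\varphi_2(\tau\mL)$ to $\varphi_1(\tau\mL)$, using $0<\varphi_2(z)\leq\varphi_1(z)$ and the decay of $(-\mu)\,\varphi_2(\tau\mu)^2$ as $\mu\to-\infty$. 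Showing that these contributions are absorbed by the negative ETD1 term $-\tau\,(\varphi_1(\tau\mL)\varphi_0(\tau\mL)g,g)_{\mathcal{X}}$ uniformly in $\tau>0$ is the crux of the argument, and where the precise $\varphi$-function inequalities enter.
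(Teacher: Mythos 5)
Your ETD1 argument is correct and is essentially the paper's: the common bound $E(Q_{m+1})-E(Q_m)\le -(v,g)_{\mathcal{X}}-(v,\mL v)_{\mathcal{X}}$ obtained from Lemmas \ref{3.4}--\ref{3.5} is exactly the paper's $\prodt{Q_{m+1}-Q_m}{LQ_{m+1}-\mN[Q_m]}_{\mathcal X}$, and your mode-wise identity $-\tau\varphi_1(\tau\mu)\varphi_0(\tau\mu)|g_\mu|^2\le 0$ is the same fact as the non-positivity of $\Delta_1=-\tfrac1\tau y_1(-\tau L)$. The ETDRK2 case, however, is not proved: you explicitly leave "the crux" open, and the absorption strategy you sketch cannot close it unconditionally. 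The reason is quantitative: for a stiff mode $\mu\le-\kappa_1$ the available negative term behaves like $\tau\varphi_1(\tau\mu)e^{\tau\mu}|g_\mu|^2$, which decays \emph{exponentially} as $\tau|\mu|\to\infty$, whereas the positive remainders you must absorb are controlled only through $\|p\|_{\mathcal X}^2\lesssim \tau^2\varphi_2(-\tau\kappa_1)^2K^2\|u\|_{\mathcal X}^2$ with $\|u\|_{\mathcal X}^2=\tau^2\sum_\mu\varphi_1(\tau\mu)^2|g_\mu|^2$ decaying only \emph{algebraically}; the ratio $\tau\varphi_1(\tau\mu)/e^{\tau\mu}$ blows up as $\tau\mu\to-\infty$, so no $\tau$-uniform absorption of an $O(\|u\|_{\mathcal X}^2)$ quantity by the ETD1 negative term is possible. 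Moreover, since $w=\mN(\widetilde Q_{m+1})-\mN(Q_m)$ is not diagonal in the eigenbasis of $\mL$, a global Lipschitz bound destroys exactly the mode-by-mode structure your ETD1 argument relies on.

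The paper avoids this entirely by a different decomposition: it splits $E(Q_{m+1})-E(Q_m)=[E(Q_{m+1})-E(V)]+[E(V)-E(Q_m)]$ with $V=\widetilde Q_{m+1}$ and applies Lemmas \ref{3.4}--\ref{3.5} \emph{twice}, once to the pair $(V,Q_m)$ and once to $(Q_{m+1},V)$. In each inequality the nonlinear term ($\mN[Q_m]$, respectively $\mN[V]$) is then eliminated by inverting the scheme's own update formula, so that both energy differences become purely quadratic forms in the increments, governed by the operators $\Delta_1=-\tfrac1\tau y_1(-\tau L)$ and $\Delta_2=L-\tau(e^{-\tau L}-\mathcal I+\tau L)^{-1}L^2$. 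A completion of the square reduces everything to the non-positivity of $\Delta_1$ and of $\Delta_2-\tfrac12\Delta_1=-\tfrac1\tau y_2(-\tau L)$, the latter resting on the scalar inequality $y_2(x)=x+\tfrac{x^2}{e^x-1-x}y_1(x)\ge0$ cited from the literature. That two-stage elimination of the nonlinearity, together with the $y_2\ge0$ inequality, is the missing idea in your proposal; without it the ETDRK2 half of the theorem remains unproved.
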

	 \begin{proof}
	 We first consider the ETD1 scheme \eqref{etd1}. Based on the  Lemma \ref{3.4} and Lemma \ref{3.5}, we can derive
\begin{align*}
 ( F_b(Q_{m+1}) - F_b(Q_{m}), I )_{\mathcal{X}}
&\le  ( Q_{m+1}-Q_{m}, -f(Q_{m}) )_{\mathcal{X}} + \kappa_1  \|Q_{m+1}-Q_{m}\|_\mathcal{X}^2  \\
&= -  ( Q_{m+1}-Q_{m}, \mN[Q_{m}] )_{\mathcal{X}} +  ( Q_{m+1}-Q_{m}, (\kappa_1+\kappa_2) Q_{m} )_{\mathcal{X}} + \kappa_1  \|Q_{m+1}-Q_{m}\|_\mathcal{X}^2,\\
(F_e(Q_{m+1}) - F_e(Q_{m}), I)_{\mathcal{X}} &\leq  (Q_{m+1}-Q_{m}, -\mL_0(Q_{m}))_{\mathcal{X}} +\kappa_2 \| Q_{m+1}-Q_{m}\|_{\mathcal{X}}^2  \\
&=  ( Q_{m+1}-Q_{m}, L Q_{m+1} )_{\mathcal{X}} -  ( Q_{m+1}-Q_{m}, (\kappa_1+\kappa_2) Q_{m+1} )_{\mathcal{X}} +\kappa_2\| Q_{m+1}-Q_{m}\|_{\mathcal{X}}^2,
\end{align*}
where $L=-\mL=-\mL_0+\kappa_1+\kappa_2$.
Adding the right-hand sides of the above two inequalities and subtracting  $\mN[Q_{m}]=L(\mathcal{I} - e^{-\tau L})^{-1} (Q_{m+1} - e^{-\tau L} Q_{m})$, we obtain
\begin{align}
    \innerx{Q_{m+1} - Q_{m}}{LQ_{m+1} -  \N[Q_{m}]} &= \innerx{Q_{m+1} - Q_{m}}{LQ_{m+1} - L(I - e^{-\tau L})^{-1}(Q_{m+1} - e^{-\tau L}Q_{m})} \nonumber \\
    &= \innerx{Q_{m+1} - Q_{m}}{L (I - e^{-\tau L})^{-1} \left(  - e^{-\tau L}Q_{m+1}  + e^{-\tau L}Q_{m} \right)} \nonumber \\
	&= \innerx{Q_{m+1} - Q_{m}}{-L\frac{e^{-\tau L}}{I - e^{-\tau L}}(Q_{m+1} - Q_{m})}
	    \nonumber \\
	&= \innerx{Q_{m+1} - Q_{m}}{\frac{-L}{e^{\tau L}-I}(Q_{m+1} - Q_{m})}
	    \nonumber
    \label{eq:intermediate_form}
\end{align}
Based on the definition of $E(Q)$ in \eqref{discrete_energy} and the above inequalities, we can obtain
\begin{align*}
		E(Q_{m+1}) - E(Q_{m})   &= ( F_b(Q_{m+1}) - F_b(Q_{m}), I )_{\mathcal{X}} + (F_e(Q_{m+1}) - F_e(Q_{m}), I)_{\mathcal{X}} \\
	     &\leq  ( Q_{m+1} - Q_{m}, \Delta_1 (Q_{m+1} - Q_{m}) )_{\mathcal{X}},
\end{align*}
where $\Delta_1 = \frac{-L}{e^{\tau L}-I}$.
Letting  $y_1(x) =  \frac{x}{e^{x}-1} $ for all $x \in \mathbb{R}$ (here we make a convention that $y_1(0) = 1$).
Then we know that $y_1(x)\geq 0$
and $\Delta_1 = -\frac{1}{\tau} y_1(-\tau L)$, we obtain that the operator $\Delta_1$ is non-positive definite which indicates that the energy is decreasing from $Q_{m}$ to $Q_{m+1}$. Thus, we have proved that the ETD1 scheme \eqref{etd1} satisfies the energy dissipation law.

Consider the ETDRK2 scheme \eqref{etd2}. Let $V = \widetilde{Q}^{n+1}$  obtained from the ETD1 scheme. The same as  the previous analysis, we can obtain
\begin{equation}
	E(V) - E(Q_{m}) \leq  ( V - Q_{m}, \Delta_1(V - Q_{m}) )_{\mathcal{X}}.\label{m3.19}
\end{equation}
Now $Q_{m+1}$ is obtained from the ETDRK2 scheme \eqref{etd2}.  Similar to the previous analysis, we can compute the energy difference between $Q_{m+1}$ and $V$:
\begin{align} E(Q_{m+1}) - E(V) &\leq  ( Q_{m+1} - V, LQ_{m+1} - \mathcal{N}[V] )_{\mathcal{X}} \no\\ &=  ( Q_{m+1} - V, LQ_{m+1} - \mathcal{N}[Q_{m}] - \tau (e^{-\tau L} - \mathcal{I} + \tau L^2)^{-1} L^2 (Q_{m+1} - V) )_{\mathcal{X}} \no\\
    &=  ( Q_{m+1} - V, \Delta_1(V - Q_{m})_{\mathcal{X}} + (L - \tau (e^{-\tau L} - \mathcal{I} + \tau L^2)^{-1} L^2) (Q_{m+1} - V) )_{\mathcal{X}} \no\\ &=  ( Q_{m+1} - V, \Delta_1(V - Q_{m}) )_{\mathcal{X}} +  ( Q_{m+1} - V, \Delta_2(Q_{m+1} - V) )_{\mathcal{X}}.\label{m3.20}
\end{align}
where $\Delta_2 = L - \tau (e^{-\tau L} - \mathcal{I} + \tau L)^{-1} L^2$. Combing \eqref{m3.19} with \eqref{m3.20}, we can obtain
\begin{align} E(Q_{m+1}) - E(Q_{m}) &\leq  ( V - Q_{m}, \Delta_1(V - Q_{m}) )_{\mathcal{X}} +  ( Q_{m+1} - V, \Delta_1(V - Q_{m}) )_{\mathcal{X}} \no\\ &\quad +  ( Q_{m+1} - V, \Delta_2(Q_{m+1} - V) )_{\mathcal{X}} \no\\ &= \frac{1}{2}  ( V - Q_{m}, \Delta_1(V - Q_{m}) )_{\mathcal{X}} +  ( Q_{m+1} - V, \left(\Delta_2 - \frac{1}{2}\Delta_1\right)(Q_{m+1} - V) )_{\mathcal{X}} \no\\ &\quad + \frac{1}{2}  \left[ ( V - Q_{m}, \Delta_1(V - Q_{m}) )_{\mathcal{X}}+ 2 ( Q_{m+1} - V, \Delta_1(V - Q_{m}) )_{\mathcal{X}}\right. \no\\ &\quad \left. + ( Q_{m+1} - V, \Delta_1(Q_{m+1} - V) )_{\mathcal{X}}\right]  \quad  \no\\ &= \frac{1}{2}  ( V - Q_{m}, \Delta_1(V - Q_{m}) )_{\mathcal{X}} +  ( Q_{m+1} - V, \left(\Delta_2 - \frac{1}{2}\Delta_1\right)(Q_{m+1} - V) )_{\mathcal{X}} \no\\ &\quad + \frac{1}{2}  ( Q_{m+1} - Q_{m}, \Delta_1(Q_{m+1} - Q_{m}) )_{\mathcal{X}}.\label{m3.21} \end{align}
Since $\Delta_1$ is non-positive definite, both the first and third terms of \eqref{m3.21} are non-positive. For the second term of \eqref{m3.21}, it is already proved in \cite{fu2022energy} that
$$ y_2(x) = x + \frac{x^2}{e^x - 1 - x} y_1(x) = \frac{x(e^x(x-3)+2)}{2(e^x-1)(e^x-1-x)} \geq 0, \quad \forall x \in \mathbb{R}, \quad  $$
where we make a convention that $y_2(0) = \frac{3}{2}$. Thus, the operator $\Delta_2 - \frac{1}{2}\Delta_1 = -\frac{1}{\tau} y_2(-\tau L)$ is non-positive definite. The second term of \eqref{m3.21} is also non-positive. Therefore, we have $E(Q_{m+1}) - E(Q_{m}) \leq 0$.
 \end{proof}
\section{Fully discrete exponential time differencing schemes}\label{section4}
\subsection{Spatial discretization}~\\
In this section, we summarize some notations  for the spectral collocation approximations of some spatial operators in the three-dimensional space with $\Omega = (-X, X) \times (-Y, Y) \times (-Z, Z)$.
 Let $N_x$, $N_y$, $N_z$ be three positive even numbers, $h_x = 2X/N_x$, $h_y = 2Y/N_y$,  $h_z = 2Z/N_z$ be the uniform mesh sizes in each direction, and $h = \max\{h_x, h_y, h_z\}$.
 We define the index sets
\begin{align*}
	S_h &= \{(x_i, y_j, z_k)|x_i = -X + ih_x, y_j = -Y + jh_y, z_k = -Z + kh_z| 1 \le i \le N_x, 1 \le j \le N_y, 1 \le k \le N_z\},\\
	\hat{S}_h &= \{(\frac{\pi n_1 }{X},\frac{\pi n_2 }{Y},\frac{\pi n_3 }{Z})  | -\frac{N_x}{2} + 1 \le n_1 \le \frac{N_x}{2}, -\frac{N_y}{2} + 1 \le n_2 \le \frac{N_y}{2}, -\frac{N_z}{2} + 1 \le n_3 \le \frac{N_z}{2}\}.
\end{align*}
All of the periodic grid functions defined on $S_h$ are denoted by $\mathcal{M}_h$, that is,
\[
\mathcal{M}_h = \{\mathbf{M}(\boldsymbol{x}): S_h \to \mathbb{R}^{3\times 3} | \boldsymbol{x} \in S_h\}.
\]
For any $\mathbf{M},\mathbf{N} \in \mathcal{M}_h$, the discrete $L^2$ inner product $ \il{\cdot}{\cdot}$, discrete $L^2$ norm $\|\cdot\|_2$, and discrete $L^\infty$ norm $\|\cdot\|_\infty$ are respectively defined by
\begin{equation}\label{for1}
    \begin{aligned}
   \il{\mathbf{M}}{\mathbf{N}} &= h_x h_y h_z \sum_{\boldsymbol{x}\in S_h} (\mathbf{M}(\boldsymbol{x}):\mathbf{N}(\boldsymbol{x})),\quad
  \|\mathbf{M}\|_2 = \sqrt{\il{\mathbf{M}}{\mathbf{M}}}, \\  \|\mathbf{M}\|_\infty &= \max_{\boldsymbol{x}\in S_h} \|\mathbf{M}(\boldsymbol{x})\|_F.
    \end{aligned}
\end{equation}
Let $\widehat{\mathcal{M}}_h=\{\mathcal{F} f \mid f\in\mathcal{M}_h\}$ be the discrete Fourier space, where $\mathcal{F}$ is the discrete Fourier transform operator. For any $\widehat{A}_{\boldsymbol{k}}, \widehat{B}_{\boldsymbol{k}} \in \widehat{\mathcal{M}}_h$, the standard Hermitian inner product is denoted by
\begin{align*}
    \langle \widehat{A}_{\boldsymbol{k}}, \widehat{B}_{\boldsymbol{k}} \rangle_F &= \sum_{\boldsymbol{k} \in \hat{S}_h} \widehat{A}_{\boldsymbol{k}} : \overline{\widehat{B}_{\boldsymbol{k}}}, \quad \text{where } \overline{\widehat{B}_{\boldsymbol{k}}} \text{ is the complex conjugate of } \widehat{B}_{\boldsymbol{k}}.
\end{align*}
\subsection{Discrete  Laplace and curl operators}
Let $\boldsymbol{k} \in \hat{S}_h$ be the wave number vector and $\boldsymbol{x} \in S_h$ be the spatial variable.
For any $Q_h \in \mathcal{M}_h$, we denote by $\widehat{Q}_{\boldsymbol{k}}$ the discrete Fourier transform of $Q_h$, which is given by
\begin{align*}
	\widehat{Q}_{\boldsymbol{k}} = (\mathcal{F} Q_h)_{\boldsymbol{k}} = \sum_{\boldsymbol{x} \in S_h}  Q_h(\boldsymbol{x}) \, e^{-i\boldsymbol{k}\cdot\boldsymbol{x}}, \quad \forall \boldsymbol{k} \in \hat{S}_h
\end{align*}
and the inverse discrete Fourier transform is given by
\begin{align*}
	 Q_h(\boldsymbol{x})  =\mathcal{F}^{-1} \widehat{Q}_{\boldsymbol{k}} = \frac{1}{N_x N_y N_z} \sum_{\boldsymbol{k} \in \hat{S}_h} \widehat{Q}_{\boldsymbol{k}} e^{i\boldsymbol{k}\cdot\boldsymbol{x}}, \quad  \boldsymbol{x} \in S_h.
\end{align*}
The discrete Laplace operator $\Delta_h=: \mathcal{M}_h \to \mathcal{M}_h$ is defined via its action in the Fourier space as:
\begin{align*}
	\mathcal{F}(\Delta_h  Q_h(\boldsymbol{x}))  = - |\boldsymbol{k}|^2\widehat{Q}_{\boldsymbol{k}}, \quad \boldsymbol{k} \in \hat{S}_h.
\end{align*}
Let the discrete curl operator, $\nabla_h \times: \mathcal{M}_h \to \mathcal{M}_h$, be defined row-wise via its symbol in the Fourier space.  Let $\widehat{\boldsymbol{q}}_{r,\boldsymbol{k}}$ for $r = 1, 2, 3$ be the Fourier transform of the $r$-th row vector field of $Q_h$. The action of the curl operator in Fourier space is given by:
\begin{align*}
    (\mathcal{F}(\nabla_h \times Q_h))_{\boldsymbol{k}}= i\boldsymbol{k} \times_r \widehat{Q}_{\boldsymbol{k}} =
    \begin{pmatrix}
        i\boldsymbol{k} \times \widehat{\boldsymbol{q}}_{1,\boldsymbol{k}} \\
        i\boldsymbol{k} \times \widehat{\boldsymbol{q}}_{2,\boldsymbol{k}} \\
        i\boldsymbol{k} \times \widehat{\boldsymbol{q}}_{3,\boldsymbol{k}}
    \end{pmatrix},
    \text{where} \times_r \text{denotes the row-wise cross product.}
\end{align*}
We then define the operator $\mathcal{C}_h: \mathcal{M}_h \to \mathcal{M}_h$ as the sum of the curl and its transpose and the Fourier multiplier corresponding to $\mathcal{C}_h$, denoted by $\widehat{\mathcal{C}}(\boldsymbol{k})$, acts on a matrix $\widehat{Q}_{\boldsymbol{k}} \in \mathbb{C}^{3\times3}$ as follows:
\begin{align*}
    \mathcal{C}_h Q_h := (\nabla_h \times Q_h) + (\nabla_h \times Q_h)^T,\quad\widehat{\mathcal{C}}(\boldsymbol{k})[\widehat{Q}_{\boldsymbol{k}}] := i\boldsymbol{k} \times_r \widehat{Q}_{\boldsymbol{k}} + (i\boldsymbol{k} \times_r \widehat{Q}_{\boldsymbol{k}})^T.
\end{align*}
The action of the full operator in Fourier space is thus
    $(\mathcal{F}(\mathcal{C}_h Q_h))_{\boldsymbol{k}} = \widehat{\mathcal{C}}(\boldsymbol{k})[\widehat{Q}_{\boldsymbol{k}}].$
We denote by $\mathcal{L}_h: \mathcal{M}_h \to \mathcal{M}_h$ and $\mathcal{L}_{h,\kappa_2}: \mathcal{M}_h \to \mathcal{M}_h$ the discrete linear operator defined by
\begin{align}
	\mathcal{L}_h  Q_h = L_1 \Delta_h  Q_h - \frac{L_4}{2} {\mathcal{C}}_h Q_h - \kappa_1  Q_h - \kappa_2  Q_h,~~
    	\mathcal{L}_{h,\kappa_2}  Q_h = L_1 \Delta_h  Q_h - \frac{L_4}{2} {\mathcal{C}}_h Q_h - \kappa_2  Q_h.\label{a5_1}
\end{align}
 The discrete nonlinear operator $\mathcal{N}_h: \mathcal{M}_h \to \mathcal{M}_h$ is defined by
\begin{align}
	\mathcal{N}_h( Q_h) = -\alpha  Q_h + \beta \left(  Q_h^2 - \frac{1}{3} \text{tr}( Q_h^2) I \right) - \gamma \text{tr}( Q_h^2)  Q_h + \kappa_1  Q_h + \kappa_2  Q_h.\label{nonlinear_operator}
\end{align}

Using the above notations, we can express the semi-discrete system of \eqref{1.8} as follows:
\begin{align}
	\frac{d Q_h}{dt} = \mathcal{L}_h  Q_h + \mathcal{N}_h( Q_h), \quad t > 0, \quad  Q_h(0,\boldsymbol{x}) =  Q_{0}(\boldsymbol{x}), \quad \forall \boldsymbol{x} \in S_h. \label{semi-discrete}
\end{align}
We denote by $\{Q_h^{m}\}_{m \geq 0}$ the numerical solution at $t_m = m \tau$ with the time step size $\tau > 0$. The fully discrete ETD1 scheme is given by
\begin{align}
	{Q}^{m+1}_{h} = \phi_0(\mathcal{L}_h \tau) {Q}^{m}_{h} + \tau \phi_1(\mathcal{L}_h \tau) {\mathcal{N}_h(Q_h^m)},\quad m \ge 0,\quad Q^0(\boldsymbol{x}) = Q_0(\boldsymbol{x})\quad \forall \boldsymbol{x} \in S_h. \label{etd1h}
\end{align}
 The fully discrete ETDRK2 scheme is given by
\begin{equation}\label{etd2h}
	\begin{aligned}
	\widetilde{Q}^{m+1}_{h} &= \phi_0(\mathcal{L}_h \tau) {Q}^{m}_{h} + \tau \phi_1(\mathcal{L}_h \tau) {\mathcal{N}_h(Q_h^m)} \\
	{Q}^{m+1}_{h} &=\widetilde{Q}^{m+1}_{h}+  \tau \phi_2(\mathcal{L}_h \tau) {(\mathcal{N}_h(\widetilde{Q}^{m+1}_{h})-\mathcal{N}_h(Q_h^m))}, \quad m \ge 0, \quad Q^0(\boldsymbol{x}) = Q_0(\boldsymbol{x})\quad \forall \boldsymbol{x} \in S_h.
	\end{aligned}
\end{equation}

The above ETD schemes can be efficiently implemented by using the fast Fourier transform (FFT) and its inverse transform. The computational cost of each time step is $\mathcal{O}(N^3 \log N)$.
    \subsection{Efficient implementations of the ETD schemes}
In this subsection, we will give the efficient implementations of the fully discrete ETD schemes   based on the discrete Fourier transform.  We denote by $\widehat{\mathcal{L}}_{\boldsymbol{k}},\widehat{\mathcal{L}}_{\boldsymbol{k},\kappa_2}$ respectively the Fourier symbol of the operator $\mathcal{L}_h,\mathcal{L}_{h,\kappa_2}$:
\begin{align}
	\widehat{\mathcal{L}}_{\boldsymbol{k}} = -L_1 |\boldsymbol{k}|^2 I - \frac{L_4}{2} \widehat{\mathcal{C}}(\boldsymbol{k}) - \kappa_1 I- \kappa_2 I,\quad \widehat{\mathcal{L}}_{\boldsymbol{k},\kappa_2} = -L_1 |\boldsymbol{k}|^2 I - \frac{L_4}{2} \widehat{\mathcal{C}}(\boldsymbol{k}) - \kappa_2 I. \label{fourier_symbol}
\end{align}
Then the key process of calculating $Q_{m+1}$ from the schemes  is the efficient implementation of the actions of the operator exponentials,
$\phi_\gamma(\mathcal{L}_h \tau) V$, $\gamma = 0, 1, 2$. Since $\mathcal{L}_h$ comes from the discretization of $\mathcal{L}$ with the periodic
boundary condition, the exponentials $\phi_\gamma(\mathcal{L}_h \tau)$ can be implemented by the 3D discrete
Fourier transform (DFT). However, the operator $\mathcal{L}_h$ cannot be diagonalized directly by the 3D DFT due to the presence of the discrete curl operator $\nabla_h \times$ and its transform. To address this challenge, we employ a spectral decomposition based on the helicity basis.
This basis provides a tensor-valued generalization of the Helmholtz decomposition, separating the tensor field into five independent modes, each with a definite helicity ($\lambda = 0, \pm 1, \pm 2$). For brevity and to emphasize its properties, we will refer to our constructed basis as the Helmholtz Symmetric Trace-Free \textbf{(HSTF)} Basis.

Here we will give a detailed description of how to construct the HSTF Basis in the Fourier space.

\subsubsection{ Construction of the Helmholtz Symmetric Trace-Free  Basis}~\\
For any given wavevector $\vect{k}\in \hat{S}_h \neq \vect{0}$, we first construct a local right-handed orthonormal coordinate system $\{\vu{e}_1, \vu{e}_2, \vu{k}\}$, where $\vu{k}= \frac{\vect{k}}{|\vect{k}|}$ is the unit vector aligned with $\vect{k}$. Then we choose a fixed, arbitrary vector $\vect{v}$ (e.g., $\vect{v}=(1,0,0)$) that is not parallel to $\vu{k}$. Using the Gram-Schmidt process, the first transverse vector $\vu{e}_1$ is obtained by projecting $\vect{v}$ onto the plane normal to $\vu{k}$ and normalizing the result:
\begin{equation}
    \vect{e}_1' = \vect{v} - (\vect{v} \cdot \vu{k}) \vu{k}, \quad \text{and} \quad \vu{e}_1 = \frac{\vect{e}_1'}{|\vect{e}_1'|}.
    \end{equation}
    If $\vu{k}$ is nearly parallel to $\vect{v}$, a different vector (e.g., $\vect{v}=(0,1,0)$) is chosen to avoid numerical instability.

  Using the cross product, the second transverse vector $\vu{e}_2$ is defined as:
    \begin{equation}
        \vu{e}_2 = \vu{k} \times \vu{e}_1,
    \end{equation}
	which is orthogonal to both $\vu{k}$ and $\vu{e}_1$ and completes the right-handed orthonormal basis.
	Then, using the vector triple product identity, we can obtain:
	\[
	\vu{k} \times\vu{e}_2 =  \vu{k} \times (\vu{k} \times \vu{e}_1) = (\vu{k} \cdot \vu{e}_1)\vu{k} - (\vu{k} \cdot \vu{k})\vu{e}_1 = -\vu{e}_1.
\]
It is clear that $\vu{e}_1$ and $\vu{e}_2$ are not still the eigenvectors of the curl operator. Finally, we make a rotation to construct the complex transverse basis vectors $\bm{m}_{\pm}(\hat{\bm{k}})$ as follows:
\begin{equation}
    \bm{m}_{\pm}(\hat{\bm{k}}) = \frac{1}{\sqrt{2}}(\hat{\bm{e}}_1 \pm i\hat{\bm{e}}_2),
\end{equation}
which satisfy
		$\bm{k} \times \bm{m}_{+} = -i|\bm{k}|\bm{m}_{+}, \bm{k} \times \bm{m}_{-} = i|\bm{k}|\bm{m}_{-}. $

 Through the dyadic products of the local basis vectors $\{\bm{m}_{+}, \bm{m}_{-}, \vu{k}\}$, we can obtain nine mutually orthogonal basis tensors:
$\{\bm{m}_{+} \otimes \bm{m}_{+}, \bm{m}_{+} \otimes \bm{m}_{-},\ldots, \vu{k} \otimes \vu{k}\}$, where $\otimes$ denotes the tensor product: $(\boldsymbol{a} \otimes \boldsymbol{b})_{ij} = a_i b_j$ for any vectors $\boldsymbol{a}, \boldsymbol{b} \in \mathbb{R}^3$.
However, we are specifically interested in the subspace of symmetric trace-free  tensors, which is 5-dimensional, satisfing the symmetry ($\tensor{Q} = \tensor{Q}^T$) and a zero trace ($\text{Tr}(\tensor{Q}) = 0$). Let $ \Gamma= \{0,+k,-k,+2k,-2k\}$.
Through linear combinations of the nine basis tensors, we can construct the HSTF Basis $\{\tensor{T}_j(\vu{k})\}_{j \in \Gamma}$ as follows:
    \begin{align}
        \tensor{T}_0(\vu{k}) &= \frac{1}{\sqrt{6}} \left( \vu{e}_1 \otimes \vu{e}_1 + \vu{e}_2 \otimes \vu{e}_2 - 2 \vu{k} \otimes \vu{k} \right) = \frac{1}{\sqrt{6}} (\tensor{I} - 3\vu{k}\otimes\vu{k}),
\\
        \tensor{T}_{\pm k}(\vu{k}) &= \frac{1}{\sqrt{2}} \left( \vect{m}_{\pm}(\vu{k}) \otimes \vu{k} + \vu{k} \otimes \vect{m}_{\pm}(\vu{k}) \right),
\\
        \tensor{T}_{\pm 2k}(\vu{k}) &= \vect{m}_{\pm}(\vu{k}) \otimes \vect{m}_{\pm}(\vu{k}).
    \end{align}
We can verify that the HSTF bases are symmetric, trace-free and mutually orthogonal:
\begin{align*}
	\tensor{T}_{  j}^T &= \tensor{T}_{  j}, \quad \text{Tr}(\tensor{T}_{  j}) = 0,\quad   \langle \tensor{T}_{  j}, \tensor{T}_{  l} \rangle_F=\delta_{jl}, \quad \forall j,l \in \Gamma.
\end{align*}
\subsubsection{Diagonalization of the discrete $\mL_N$ operator}~\\
Let's first consider the action on $\tensor{T}_{+2k} = \vect{m}_{+} \otimes \vect{m}_{+}$. Using the tensor identity $\boldsymbol{a} \timesr (\boldsymbol{b} \otimes \boldsymbol{c}) = (\boldsymbol{a} \times \boldsymbol{b}) \otimes \boldsymbol{c}$, we have:
\begin{align}
    \boldsymbol{k} \timesr \tensor{T}_{+2k} &= \boldsymbol{k} \timesr (\vect{m}_{+} \otimes \vect{m}_{+}) = (\boldsymbol{k} \times \vect{m}_{+}) \otimes \vect{m}_{+} \nonumber \\
    &= (-i|\boldsymbol{k}|\vect{m}_{+}) \otimes \vect{m}_{+} = -i|\boldsymbol{k}| \tensor{T}_{+2k}. \label{eq:curl_on_T2k}
\end{align}
Since $\tensor{T}_{+2k}$ is symmetric, $(\tensor{T}_{+2k})^T = \tensor{T}_{+2k}$. Substituting \eqref{eq:curl_on_T2k} into the operator symbol's definition:
\begin{align}
    \widehat{\mathcal{C}}(\boldsymbol{k})[\tensor{T}_{+2k}] &= i\left( (-i|\boldsymbol{k}|\tensor{T}_{+2k}) + (-i|\boldsymbol{k}|\tensor{T}_{+2k})^T \right) \nonumber \\
    &= i\left( -i|\boldsymbol{k}|\tensor{T}_{+2k} - i|\boldsymbol{k}|\tensor{T}_{+2k} \right) = 2|\boldsymbol{k}|\tensor{T}_{+2k}.
\end{align}
Similarly, for $\tensor{T}_{-2k} = \vect{m}_{-} \otimes \vect{m}_{-}$, we find $\boldsymbol{k} \timesr \tensor{T}_{-2k} = +i|\boldsymbol{k}| \tensor{T}_{-2k}$, which leads to:
\begin{equation}
    \widehat{\mathcal{C}}(\boldsymbol{k})[\tensor{T}_{-2k}] = -2|\boldsymbol{k}|\tensor{T}_{-2k}.
\end{equation}

Now consider the action on $\tensor{T}_{+k} = \frac{1}{\sqrt{2}}(\vect{m}_{+} \otimes \vu{k} + \vu{k} \otimes \vect{m}_{+})$. We analyze the two terms separately:
\begin{align}
    \boldsymbol{k} \timesr (\vect{m}_{+} \otimes \vu{k}) &= (\boldsymbol{k} \times \vect{m}_{+}) \otimes \vu{k} = -i|\boldsymbol{k}|(\vect{m}_{+} \otimes \vu{k}). \label{eq:curl_on_Tk}\\
    \boldsymbol{k} \timesr (\vu{k} \otimes \vect{m}_{+}) &= (\boldsymbol{k} \times \vu{k}) \otimes \vect{m}_{+} = \boldsymbol{0}.
\end{align}
The basis tensor $\tensor{T}_{+k}$ is symmetric by construction. Substituting \eqref{eq:curl_on_Tk} into the operator definition:
\begin{align}
    \widehat{\mathcal{C}}(\boldsymbol{k})[\tensor{T}_{+k}] &= i\left( \left(-\frac{i|\boldsymbol{k}|}{\sqrt{2}}(\vect{m}_{+} \otimes \vu{k})\right) + \left(-\frac{i|\boldsymbol{k}|}{\sqrt{2}}(\vect{m}_{+} \otimes \vu{k})\right)^T \right) \nonumber \\
    &= \frac{|\boldsymbol{k}|}{\sqrt{2}} \left( \vect{m}_{+} \otimes \vu{k} + \vu{k} \otimes \vect{m}_{+} \right) = |\boldsymbol{k}|\tensor{T}_{+k}.
\end{align}
By a similar calculation for $\tensor{T}_{-k}$, using $\boldsymbol{k} \times \vect{m}_{-} = +i|\boldsymbol{k}|\vect{m}_{-}$, we obtain:
\begin{equation}
    \widehat{\mathcal{C}}(\boldsymbol{k})[\tensor{T}_{-k}] = -|\boldsymbol{k}|\tensor{T}_{-k}.
\end{equation}
This completes the verification that the four complex HSTF basis tensors are indeed eigentensors of the curl operator symbol, with eigenvalues proportional to their helicity.
Then the eigenvalues of the operator $\widehat{\mathcal{L}}_{\boldsymbol{k}}$ with respect to the HSTF basis can be given by
\begin{equation}\label{eigenvalues}
    \begin{aligned}
	\lambda_{\boldsymbol{k}}^{(j)} &= -L_1 |\boldsymbol{k}|^2 +\frac{j L_4 }{2} |\boldsymbol{k}|- \kappa_1-\kappa_2, \quad j \in \Gamma.
    \end{aligned}
\end{equation}

For any given wavevector $\boldsymbol{k} \neq \boldsymbol{0}$,   any discrete Fourier mode $\widehat{Q}_{\boldsymbol{k}}$ can be expanded in the HSTF basis $\{\tensor{T}_0, \tensor{T}_{\pm k}, \tensor{T}_{\pm 2k}\}$ as follows:
\begin{equation*}
    \widehat{Q}_{\boldsymbol{k}} = \sum_{j \in \Gamma} c_j(\boldsymbol{k}) \tensor{T}_j(\vu{k}), \quad \text{where } c_j(\boldsymbol{k}) = (\widehat{Q}_{\boldsymbol{k}}, \tensor{T}_j(\vu{k}))_F.
\end{equation*}
Then the Fourier symbol $\widehat{\mathcal{L}}_{\boldsymbol{k}}$ of the operator $\mathcal{L}_h$ can be diagonalized in the HSTF basis as follows:
\begin{align*}
	\widehat{\mathcal{L}}_{\boldsymbol{k}} \widehat{Q}_{\boldsymbol{k}}= \sum_{j \in \Gamma} \lambda_{\boldsymbol{k}}^{(j)} c_j(\boldsymbol{k}) \tensor{T}_j(\vu{k}).
\end{align*}
\subsubsection{Dissipation of the discrete linear operator $\mathcal{L}_{h,\kappa_2}$}~\\
In this subsection, we will prove that the discrete linear operator $\mathcal{L}_{h,\kappa_2}$ defined in \eqref{a5_1} is dissipative under certain conditions, which is crucial for the stability analysis of the ETD schemes.
\begin{lemma}
\label{5_1}
Let $Q_h \in \mathcal{M}_h$ be any periodic grid function. Let $\mathcal{L}_{h,\kappa_2}$ be defined in \eqref{a5_1}. If $L_1 > 0$ and the stabilization parameter is chosen such that $\kappa_2 \geq \frac{L_4^2}{2L_1}$, then the operator $\mathcal{L}_{h,\kappa_2}$ is dissipative (negative semi-definite) in the discrete $L^2$ inner product, i.e.,
\begin{equation*}
   \il{\mathcal{L}_{h,\kappa_2}Q_h}{Q_h} \leq 0.
\end{equation*}
And the operator $\mathcal{L}_{h,\kappa_2}$ generates a contraction semigroup in the discrete $L^2$ norm, i.e.,
\begin{equation*}
    \|\mathrm{e}^{\mathcal{L}_{h,\kappa_2} t} Q_h\|_2 \leq \|Q_h\|_2, \quad \forall t \geq 0.
\end{equation*}
\end{lemma}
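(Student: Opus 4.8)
The plan is to transfer the quadratic form $\il{\mathcal{L}_{h,\kappa_2}Q_h}{Q_h}$ into Fourier space and exploit the HSTF diagonalization established in the preceding subsection. First I would invoke the discrete Parseval identity so that, up to the positive normalization factor $\frac{h_xh_yh_z}{N_xN_yN_z}$, the form decouples across wavevectors:
\[
\il{\mathcal{L}_{h,\kappa_2}Q_h}{Q_h} = \frac{h_xh_yh_z}{N_xN_yN_z}\sum_{\boldsymbol{k}\in\hat{S}_h}\langle \widehat{\mathcal{L}}_{\boldsymbol{k},\kappa_2}\widehat{Q}_{\boldsymbol{k}},\widehat{Q}_{\boldsymbol{k}}\rangle_F.
\]
Since $Q_h$ is real-valued and $\mathcal{L}_{h,\kappa_2}$ maps real grid functions to real grid functions, the left-hand side is real, and it suffices to show that every modal term on the right is nonpositive.

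For each $\boldsymbol{k}\neq\boldsymbol{0}$ I would expand $\widehat{Q}_{\boldsymbol{k}}=\sum_{j\in\Gamma}c_j(\boldsymbol{k})\tensor{T}_j(\vu{k})$ in the orthonormal HSTF basis. The curl computations above show that each $\tensor{T}_j$ is an eigentensor of $\widehat{\mathcal{C}}(\boldsymbol{k})$ with a real eigenvalue and that the basis is orthonormal, so $\widehat{\mathcal{L}}_{\boldsymbol{k},\kappa_2}$ is Hermitian on the five-dimensional symmetric trace-free subspace with real spectrum $\{\lambda_{\boldsymbol{k},\kappa_2}^{(j)}\}$. Consequently the modal form collapses to $\sum_{j\in\Gamma}\lambda_{\boldsymbol{k},\kappa_2}^{(j)}|c_j(\boldsymbol{k})|^2$ with $\lambda_{\boldsymbol{k},\kappa_2}^{(j)}=-L_1|\boldsymbol{k}|^2+\tfrac{jL_4}{2}|\boldsymbol{k}|-\kappa_2$, and it remains to prove $\lambda_{\boldsymbol{k},\kappa_2}^{(j)}\le 0$ for all $j$ and all $\boldsymbol{k}$. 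The worst case is provided by the maximal-helicity modes $\tensor{T}_{\pm 2k}$, with the sign chosen to match that of $L_4$, which gives $-L_1|\boldsymbol{k}|^2+|L_4||\boldsymbol{k}|-\kappa_2$; completing the square in $|\boldsymbol{k}|$ (equivalently, maximizing this downward parabola at $|\boldsymbol{k}|=|L_4|/(2L_1)$) bounds it by $\tfrac{L_4^2}{4L_1}-\kappa_2$, which is nonpositive under the hypothesis $\kappa_2\ge\tfrac{L_4^2}{2L_1}$. The zero mode is handled separately: there $\widehat{\mathcal{C}}(\boldsymbol{0})=0$ and $|\boldsymbol{k}|=0$, so $\widehat{\mathcal{L}}_{\boldsymbol{0},\kappa_2}=-\kappa_2\mathrm{I}$ contributes $-\kappa_2\|\widehat{Q}_{\boldsymbol{0}}\|_F^2\le 0$. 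Summing over $\boldsymbol{k}$ then yields $\il{\mathcal{L}_{h,\kappa_2}Q_h}{Q_h}\le 0$.

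For the contraction-semigroup claim, since we work in finite dimensions I would bypass the Lumer-Phillips machinery and argue directly: setting $y(t)=\mathrm{e}^{\mathcal{L}_{h,\kappa_2}t}Q_h$, which solves $y'(t)=\mathcal{L}_{h,\kappa_2}y(t)$, the dissipativity just proved gives $\frac{d}{dt}\|y(t)\|_2^2 = 2\il{\mathcal{L}_{h,\kappa_2}y(t)}{y(t)}\le 0$, so $\|y(t)\|_2$ is nonincreasing and $\|\mathrm{e}^{\mathcal{L}_{h,\kappa_2}t}Q_h\|_2\le\|Q_h\|_2$ for all $t\ge 0$. I expect the main obstacle to be the bookkeeping in the modal reduction, namely confirming that $\widehat{\mathcal{L}}_{\boldsymbol{k},\kappa_2}$ is genuinely Hermitian on the symmetric trace-free subspace (so that the cross terms $c_j\overline{c_l}$ drop out and the form is the real weighted sum of the $|c_j|^2$), together with the correct Parseval normalization and the separate treatment of $\boldsymbol{k}=\boldsymbol{0}$. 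A fully equivalent alternative, mirroring Lemma \ref{3.1} and Lemma \ref{3.5}, is to stay on the grid: discrete summation by parts gives $L_1\il{\Delta_h Q_h}{Q_h}=-L_1\|\nabla_h Q_h\|_2^2$, the symmetry of $Q_h$ reduces the curl contribution to $-L_4\il{\nabla_h\times Q_h}{Q_h}$, and Young's inequality combined with the Fourier-space estimate $\|\nabla_h\times Q_h\|_2^2\le 2\|\nabla_h Q_h\|_2^2$ recovers precisely the threshold $\kappa_2\ge\tfrac{L_4^2}{2L_1}$.
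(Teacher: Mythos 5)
Your proposal is correct and follows essentially the same route as the paper's proof: discrete Parseval, expansion of each Fourier mode in the orthonormal HSTF basis so that the quadratic form reduces to $\sum_{j}\lambda_{\boldsymbol{k},\kappa_2}^{(j)}|c_j(\boldsymbol{k})|^2$, and completing the square in $|\boldsymbol{k}|$ to bound the worst eigenvalue by $\tfrac{L_4^2}{4L_1}-\kappa_2\le 0$. The only deviations are minor and sound: you conclude the contraction property by the direct finite-dimensional energy identity $\tfrac{d}{dt}\|y\|_2^2=2\il{\mathcal{L}_{h,\kappa_2}y}{y}\le 0$ rather than via the Lumer--Phillips resolvent estimate the paper uses, and you explicitly treat the $\boldsymbol{k}=\boldsymbol{0}$ mode (where the HSTF basis is undefined), a detail the paper leaves implicit.
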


\begin{proof}
    Based on \eqref{eigenvalues} and the definition in \eqref{fourier_symbol}, we  have that the eigenvalues of the operator $\widehat{\mathcal{L}}_{\boldsymbol{k},\kappa_2}$, denoted by $\lambda_{\boldsymbol{k},\kappa_2}$, with respect to the HSTF basis are given by
    \begin{equation*}
        \lambda_{\boldsymbol{k},\kappa_2}^{(j)} = -L_1 |\boldsymbol{k}|^2 +\frac{j L_4 }{2} |\boldsymbol{k}|- \kappa_2, \quad j \in \Gamma.
    \end{equation*}
By applying the discrete Parseval's theorem, for a discrete wavevector $\boldsymbol{k} \in  \hat{S}_k$, we can express the discrete $L^2$ inner product in terms of the Discrete Fourier Transform (DFT) coefficients of the grid function $Q_h$:
\begin{equation*}
   \il{\mathcal{L}_{h,\kappa_2}Q_h}{Q_h} = C_N  \left\langle\widehat{(\mathcal{L}_{h,\kappa_2}Q_h)}_{\boldsymbol{k}} , \overline{\widehat{(Q_h)}_{\boldsymbol{k}}}\right\rangle_F =C_N\sum_{\boldsymbol{k} \in  \hat{S}_k} \left( \sum_{j \in \Gamma} \lambda_{\boldsymbol{k},\kappa_2}^{(j)}c_j(\boldsymbol{k}) \tensor{T}_j(\vu{k}) \right) : \left( \sum_{l \in \Gamma} \overline{c_l(\boldsymbol{k})} \overline{\tensor{T}_l(\vu{k})} \right),
\end{equation*}
where $C_N$ is a positive normalization constant.
Since the HSTF basis is orthonormal, and it consists of eigenvectors of $\hat{\mathcal{L}}_{\kappa_2}(\boldsymbol{k})$ with corresponding real eigenvalues $\lambda_{\boldsymbol{k}}^{(j)}$, the inner product term simplifies to:
\begin{align}
  \il{\mathcal{L}_{h,\kappa_2}Q_h}{Q_h}
    &= C_N\sum_{\boldsymbol{k} \in  \hat{S}_k}\left\langle \sum_{j \in \Gamma} c_j(\boldsymbol{k}) \lambda_{\boldsymbol{k},\kappa_2}^{(j)}\mathbf{T}_j(\boldsymbol{k}), \sum_{l \in \Gamma} c_l(\boldsymbol{k}) \mathbf{T}_l(\boldsymbol{k}) \right\rangle_F \quad \no \\
    &= C_N\sum_{\boldsymbol{k} \in  \hat{S}_k}\sum_{j,l \in \Gamma} c_j(\boldsymbol{k}) \overline{c_l(\boldsymbol{k})} \lambda_{\boldsymbol{k},\kappa_2}^{(j)}\left\langle \mathbf{T}_j(\boldsymbol{k}), \mathbf{T}_l(\boldsymbol{k}) \right\rangle_F \quad  \no\\
    &= C_N\sum_{\boldsymbol{k} \in  \hat{S}_k}\sum_{j,l \in \Gamma} c_j(\boldsymbol{k}) \overline{c_l(\boldsymbol{k})} \lambda_{\boldsymbol{k},\kappa_2}^{(j)}\delta_{jl} = C_N\sum_{\boldsymbol{k} \in  \hat{S}_k}\sum_{j \in \Gamma} \lambda_{\boldsymbol{k},\kappa_2}^{(j)}|c_j(\boldsymbol{k})|^2.\label{eq_proof_discrete_sum}
\end{align}

By completing the square and using the condition that $L_1 > 0,\kappa_2>\frac{L_4^2}{2L_1}$, we have
\begin{equation*}
   \max_{j\in \Gamma} \lambda_{\boldsymbol{k},\kappa_2}^{(j)}=-L_1 |\boldsymbol{k}|^2 + |L_4||\boldsymbol{k}|-\kappa_2 = -L_1\left(|\boldsymbol{k}| - \frac{|L_4|}{2L_1}\right)^2 + \frac{L_4^2}{4L_1}-\kappa_2\leq0,
\end{equation*}
which implies that $\lambda_{\boldsymbol{k},\kappa_2}^{(j)} \leq 0$ for all discrete wavevectors $\boldsymbol{k} \in  \hat{S}_k$ and for all $j \in \Gamma$.
Since every term in the sum \eqref{eq_proof_discrete_sum} is a product of a non-positive eigenvalue $\lambda_{\boldsymbol{k},\kappa_2}^{(j)}$ and a non-negative squared modulus $|c_j(\boldsymbol{k})|^2$, the entire sum must be non-positive.
Thus, we conclude that
\begin{align*}
   \il{\mathcal{L}_{h,\kappa_2}Q_h}{Q_h} &\leq 0,
\end{align*}
which establishes the dissipativity of the discrete operator $\mathcal{L}_{h,\kappa_2}$.
Then for any $\mu >0$, we have
\begin{align*}
    \|(\mu I - \mathcal{L}_{h,\kappa_2})Q_h\|_2^2 &= \il{(\mu I - \mathcal{L}_{h,\kappa_2})Q_h}{(\mu I - \mathcal{L}_{h,\kappa_2})Q_h} \\
    &= \mu^2 \|Q_h\|_2^2 - 2\mu \il{\mathcal{L}_{h,\kappa_2}Q_h}{Q_h} + \|\mathcal{L}_{h,\kappa_2}Q_h\|_2^2 \geq \mu^2 \|Q_h\|_2^2.
\end{align*}
By the Lumer-Phillips theorem and the process  stated in Lemma \ref{3.1} and \cite{du2021}, we can drive that $\mathcal{L}_{h,\kappa_2}$ generates a $C_0$ semigroup of contractions on the Hilbert space $(\mathcal{M}_h)$. Therefore, we have
\begin{align*}
    \|e^{\tau\mathcal{L}_{h,\kappa_2} }Q_h\|_2 \leq \|Q_h\|_2, \quad \forall \tau \geq 0.
\end{align*}
This completes the proof and establishes the dissipativity of the discrete operator $\mathcal{L}_{h,\kappa_2}$.
\end{proof}

\subsubsection{ Efficient Implementation of the ETD Schemes}
Let  $\varphi$ functions be defined in \eqref{var_functions}.
Using the above notations, we can express the actions of the operator exponentials $\varphi_\gamma(\mathcal{L}_h \tau)$, $\gamma = 0, 1, 2$ as follows:
\begin{align*}
	\varphi_\gamma(\mathcal{L}_h \tau) \widehat{Q}_{\boldsymbol{k}} &=  \left( \varphi_\gamma(\widehat{\mathcal{L}}_{\boldsymbol{k}} \tau) \widehat{Q}_{\boldsymbol{k}} \right) = \left( \sum_{j\in \Gamma}\varphi_\gamma(\lambda_{\boldsymbol{k}}^{(j)} \tau) (\widehat{Q}_{\boldsymbol{k}}^{}, \tensor{T}_j(\vu{k}))_F\tensor{T}_j(\vu{k})\right), \quad \gamma = 0, 1, 2.
\end{align*}
Then, for the ETD1 scheme \eqref{etd1h}  implemented with the HSTF basis  in Fourier space,   we have
\begin{align*}
	\widehat{Q}^{m+1}_{\boldsymbol{k}} = \sum_{j\in \Gamma}\lambda_{\boldsymbol{k}}^{(j)} (\widehat{Q}_{\boldsymbol{k}}^{}, \tensor{T}_j(\vu{k}))_F\tensor{T}_j(\vu{k}) + \tau \sum_{j\in \Gamma}\varphi_1(\lambda_{\boldsymbol{k}}^{(j)} \tau) (\widehat{\mathcal{N}_h(Q_{m})}_{\boldsymbol{k}}^{}, \tensor{T}_j(\vu{k}))_F\tensor{T}_j(\vu{k}).
\end{align*}
And for the ETDRK2 scheme \eqref{etd1h}  implemented with the HSTF basis  in Fourier space,   we have
\begin{align*}
	\widehat{\widetilde{Q}}^{m+1}_{\boldsymbol{k}} &= \sum_{j\in \Gamma}\lambda_{\boldsymbol{k}}^{(j)} (\widehat{Q}_{\boldsymbol{k}}^{}, \tensor{T}_j(\vu{k}))_F\tensor{T}_j(\vu{k}) + \tau \sum_{j\in \Gamma}\varphi_1(\lambda_{\boldsymbol{k}}^{(j)} \tau) (\widehat{\mathcal{N}_h(Q_{m})}_{\boldsymbol{k}}^{}, \tensor{T}_j(\vu{k}))_F\tensor{T}_j(\vu{k}) \\
	\widehat{Q}^{m+1}_{\boldsymbol{k}} &= \widehat{\widetilde{Q}}^{m+1}_{\boldsymbol{k}} + \tau \sum_{j\in \Gamma}\varphi_2(\lambda_{\boldsymbol{k}}^{(j)} \tau) \left( (\widehat{\mathcal{N}_h(\widetilde{Q}^{m+1})}_{\boldsymbol{k}}^{}, \tensor{T}_j(\vu{k}))_F - (\widehat{\mathcal{N}_h(Q_{m})}_{\boldsymbol{k}}^{}, \tensor{T}_j(\vu{k}))_F \right)\tensor{T}_j(\vu{k}).
\end{align*}
\begin{remark}[Physical Interpretation of the HSTF Basis]
\label{rem:physical_interpretation}
The HSTF basis provides a direct physical interpretation of the elementary structural motifs of an order parameter field in Fourier space. Each mode corresponds to a distinct pattern of spatial modulation along the wavevector $\vu{k}$:
\begin{itemize}
    \item The \textbf{$\lambda=0$} mode represents a \textbf{compressional mode}, where the principal axis is transverse to $\vu{k}$ while the degree of order is modulated longitudinally.

    \item The \textbf{$\lambda=\pm 1$} modes describe a \textbf{conical-helical} structure, where the principal axis is tilted with respect to $\vu{k}$ and precesses around it.

    \item The \textbf{$\lambda=\pm 2$} modes correspond to the quintessential \textbf{double-twist} structure. Here, the principal axis lies in the transverse plane and rotates by $4\pi$ over one spatial period. This mode is the fundamental building block of chiral liquid crystal phases, such as cholesterics and blue phases.
\end{itemize}
Thus, complex structures can be understood as a superposition of these few fundamental modes. For example, the cubic blue phases emerge from the interference of a discrete set of double-twist modes oriented along specific crystallographic axes.
\end{remark}
\section{Fully discrete  energy dissipation and error analysis}\label{section5}~\\
In this section, we present the error analysis and energy dissipation for the fully discrete ETD1 scheme \eqref{etd1h} and ETDRK2 scheme \eqref{etd2h}. The main results are presented in the following theorems.
\begin{theorem}\label{energy_d}
    Define the discrete form of the free energy \eqref{free_energy}  as follows:
\begin{align}
	E_h[Q_h]=  \frac{L_1}{2}\|\nabla_h Q_h\|_2^2+\frac{L_4}{2}\il{Q_h}{\nabla_h\times Q_h}+\frac{\alpha}{2} \| Q_h\|_2^2-\frac{\beta}{3}  \il{Q_h}{Q_h^2}+\frac{\gamma}{4} \|Q_h\|_2^4.\label{eq:fully_discrete_energy_physical}
\end{align}
Suppose   $\{Q_h^m\}_{m \geq 0}$ generated by the fully discrete ETD1 scheme \eqref{etd1h} and  ETDRK2 scheme \eqref{etd2h}. Then when $\kappa_2 \geq \frac{L_4^2}{2 L_1}$, $m \geq 0$, we have the following energy dissipation law:
       \begin{align*}
      E_h[Q_h^{m+1}] \leq E_h[Q_h^m].
       \end{align*}
\end{theorem}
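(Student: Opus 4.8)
The plan is to replay the semi-discrete argument of Theorem~\ref{energy_stability} at the fully discrete level, with the continuous operators replaced by their spectral-collocation counterparts from Section~\ref{section4} and the inner product $(\cdot,\cdot)_{\mathcal X}$ replaced by the discrete one $\il{\cdot}{\cdot}$ of \eqref{for1}. First I would establish the two structural estimates that drove the continuous proof: a discrete \emph{bulk} inequality (the analogue of Lemma~\ref{3.4}) and a discrete \emph{elastic} inequality (the analogue of Lemma~\ref{3.5}). The bulk estimate is purely algebraic and pointwise---the manipulations of Lemma~\ref{3.4} hold verbatim at each node $\boldsymbol x\in S_h$---so summing against the positive weight $h_xh_yh_z$ and using the discrete maximum bound $\|Q_h^m\|_\infty\le a$ (inherited from Theorem~\ref{mbp}, which also forces the carried-over condition $\kappa_1\ge|\alpha|+|\beta|a^2+2\gamma a^2$) yields
\[
\il{F_b(Q_h^{m+1})-F_b(Q_h^{m})}{I}+\il{Q_h^{m+1}-Q_h^{m}}{f(Q_h^{m})}\le \kappa_1\|Q_h^{m+1}-Q_h^{m}\|_2^2 .
\]
The elastic estimate is the one that requires the explicitly stated hypothesis $\kappa_2\ge L_4^2/(2L_1)$: it rests on exact discrete summation-by-parts, which the Fourier spectral operators satisfy since $\widehat{\Delta}_{\boldsymbol k}=-|\boldsymbol k|^2I$ is real and $\widehat{\mathcal C}(\boldsymbol k)$ is Hermitian, combined with the dissipativity already proved in Lemma~\ref{5_1}.

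Next I would add the two estimates. Writing $L_h=-\mathcal{L}_h$ and using $\mathcal{N}_h(Q)=(\kappa_1+\kappa_2)Q+f(Q)$ from \eqref{nonlinear_operator} together with $L_1\Delta_h-\tfrac{L_4}{2}\mathcal{C}_h=-L_h+(\kappa_1+\kappa_2)\mathcal{I}$, the stabilization contributions cancel exactly as in the continuous proof: the cross term $(\kappa_1+\kappa_2)\il{Q_h^{m+1}-Q_h^{m}}{Q_h^{m}-Q_h^{m+1}}$ absorbs both penalty terms $\kappa_1\|\cdot\|_2^2$ and $\kappa_2\|\cdot\|_2^2$, leaving
\[
E_h[Q_h^{m+1}]-E_h[Q_h^{m}]\le \il{Q_h^{m+1}-Q_h^{m}}{L_hQ_h^{m+1}-\mathcal{N}_h(Q_h^{m})} .
\]
Inverting the ETD1 relation \eqref{etd1h} gives $\mathcal{N}_h(Q_h^m)=L_h(\mathcal{I}-e^{-\tau L_h})^{-1}(Q_h^{m+1}-e^{-\tau L_h}Q_h^m)$, and substituting reduces the right-hand side to $\il{Q_h^{m+1}-Q_h^m}{\Delta_1(Q_h^{m+1}-Q_h^m)}$ with $\Delta_1=-L_h/(e^{\tau L_h}-\mathcal{I})$, in complete parallel with the semi-discrete derivation.

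The decisive step is to show $\Delta_1$ is negative semi-definite in $\il{\cdot}{\cdot}$. Here I would \emph{not} appeal to self-adjointness in the canonical basis---$\mathcal{L}_h$ is not symmetric because of the curl---but instead to the HSTF diagonalization of Section~\ref{section4}: $\mathcal{L}_h$ is simultaneously diagonalized by the orthonormal eigentensors $\tensor{T}_j(\vu k)$ with the \emph{real} eigenvalues $\lambda_{\boldsymbol k}^{(j)}$ of \eqref{eigenvalues}. Consequently, by the discrete Parseval identity used in the proof of Lemma~\ref{5_1},
\[
\il{v}{\Delta_1 v}=-\frac{C_N}{\tau}\sum_{\boldsymbol k\in\hat S_h}\sum_{j\in\Gamma} y_1\!\left(-\tau\lambda_{\boldsymbol k}^{(j)}\right)|c_j(\boldsymbol k)|^2\le 0 ,
\]
since $y_1(x)=x/(e^x-1)\ge0$, which settles the ETD1 case. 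For ETDRK2 I would set $V=\widetilde Q_h^{m+1}$, split $E_h[Q_h^{m+1}]-E_h[Q_h^m]$ into the ETD1-type increment $E_h[V]-E_h[Q_h^m]$ and the correction $E_h[Q_h^{m+1}]-E_h[V]$, and reproduce the algebraic identity \eqref{m3.21}; the only new operator is $\Delta_2-\tfrac12\Delta_1=-\tfrac1\tau y_2(-\tau L_h)$ with $\Delta_2=L_h-\tau(e^{-\tau L_h}-\mathcal{I}+\tau L_h)^{-1}L_h^2$, whose negativity again follows mode-by-mode from $y_2\ge0$ (from \cite{fu2022energy}) via the same HSTF/Parseval argument.

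The main obstacle is precisely this operator-functional-calculus point: the curl contribution makes $L_h$ non-symmetric in the matrix-component basis, so the scalar inequalities $y_1,y_2\ge0$ cannot be lifted to operator inequalities by spectral-theorem boilerplate. The resolution is that the HSTF basis renders $L_h$ normal with purely real spectrum relative to $\il{\cdot}{\cdot}$, so every operator inequality collapses to the scalar inequalities evaluated at the real numbers $-\tau\lambda_{\boldsymbol k}^{(j)}$. A secondary, routine check is that the discrete energy $E_h$ in \eqref{eq:fully_discrete_energy_physical} splits exactly as bulk plus elastic parts under $\il{\cdot}{\cdot}$ and that the summation-by-parts identities in the discrete analogue of Lemma~\ref{3.5} carry no boundary remainder; both are immediate from periodicity and the Fourier symbols.
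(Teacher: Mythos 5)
Your overall strategy is the same as the paper's: reduce the fully discrete statement to discrete analogues of Lemmas~\ref{3.4} and \ref{3.5}, then replay the operator algebra of Theorem~\ref{energy_stability} with $\Delta_1=-L_h/(e^{\tau L_h}-\mathcal I)$ and $\Delta_2-\tfrac12\Delta_1$ shown negative semi-definite mode-by-mode. On the functional-calculus point you are in fact more explicit than the paper, which simply asserts that the semi-discrete argument carries over; you correctly observe that the HSTF diagonalization of Section~\ref{section4} (real eigenvalues $\lambda_{\boldsymbol k}^{(j)}$, orthonormal eigentensors, discrete Parseval) is what allows the scalar inequalities $y_1,y_2\ge 0$ to be lifted to the non-symmetric operator $L_h$.

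The one genuine gap is the source of the uniform bound $\|Q_h^m\|_\infty\le a$ that your discrete bulk inequality requires. You claim it is ``inherited from Theorem~\ref{mbp}'', but that theorem concerns the time-discrete, space-continuous schemes and does not transfer to the Fourier-collocation discretization: $e^{\tau\mathcal L_h}$ is not a contraction in the discrete $L^\infty$ norm, and the paper itself states in Section~\ref{section6} that a fully discrete MBP proof ``is challenging due to the characteristics of spectral methods.'' The paper instead invokes the bounds \eqref{etd1_bound} and \eqref{etd2_bound}, which come out of the bootstrap/error analysis of Section~\ref{section5} and therefore hold only under the CFL-type restriction $\tau\lesssim h^{3/(2(3-r))}$ and the regularity hypotheses of Theorems~\ref{etd_error1}--\ref{etd_error2}. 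So the fully discrete energy dissipation is, strictly speaking, conditional on those hypotheses, and your proof should cite \eqref{etd1_bound} and \eqref{etd2_bound} (with $M_Q+1$ in place of $a$ and $\kappa_1$ enlarged accordingly) rather than Theorem~\ref{mbp}. With that substitution, the rest of your argument goes through as written and matches the paper's proof.
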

\begin{proof}
    It is easy to find that the most process of Theorem \ref{energy_stability} are Lemma \ref{3.4} and Lemma \ref{3.5}. If the fully discrete level of Lemma \ref{3.4} and Lemma \ref{3.5} hold, then the energy dissipation of Theorem \ref{energy_d} can be proved as the semi-discrete case.
    Using \eqref{etd1_bound} and \eqref{etd2_bound}, we can get the uniform $L_\infty$ bound of the numerical solution $Q_h^m$ generated by the ETD1 scheme \eqref{etd1h} and the ETDRK2 scheme \eqref{etd2h}.  Then the discrete analogue of Lemma~\ref{3.4} holds:
	 \begin{align*}
		 \il{F_b(Q_h^{m+1}) - F_b(Q_h^m)}{ I} +  \il{Q_h^{m+1} - Q_h^m}{ f(Q_h^m)}  \leq \kappa_1\|Q_h^{m+1} - Q_h^m\|_2^2,
	 \end{align*}
Based on Lemma \ref{5_1}, when $\kappa_2 \geq \frac{L_4^2}{4 \tilde{L}_1 }= \frac{L_4^2}{2 L_1 }$, we can  show that the discrete analogue of Lemma~\ref{3.5} holds:
	 \begin{align*}
		 &\il{F_e(Q_h^{m+1}) - F_e(Q_h^m)}{I} +  \il{Q_h^{m+1} - Q_h^m}{\mL_0(Q_h^{m+1})} \\ &\leq - \frac{L_1}{2} \|\nabla_h(Q_h^{m+1}-Q_h^m)\|_{2}^2- \frac{L_4}{2}\il{Q_h^{m+1}-Q_h^m}{\nabla_h\times{(Q_h^{m+1}-Q_h^m)}} \\ &\leq  \tilde{L}_1 \il{Q_h^{m+1}-Q_h^m}{\Delta_h(Q_h^{m+1}-Q_h^m)} - \frac{L_4}{2}\il{Q_h^{m+1}-Q_h^m}{\nabla_h\times(Q_h^{m+1}-Q_h^m)}\\&\leq \il{Q_h^{m+1}-Q_h^m}{\mL_{h,\kappa_2}(Q_h^{m+1}-Q_h^m)}+\kappa_2 \|Q_h^{m+1}-Q_h^m\|_{2}^2 \leq \kappa_2 \|Q_h^{m+1}-Q_h^m\|_{2}^2,
	 \end{align*}
where $\tilde{L}_1 = \frac{L_1}{2}$.
Thus, the energy dissipation of Theorem \ref{energy_d} holds.
\end{proof}
\begin{theorem}\label{etd_error1}
	Suppose the exact solution \(Q(t)\) of the  BPs equation \eqref{1.6} is sufficiently smooth,  \(Q \in C^4([0,T]; H^s_{\text{per}}(\Omega))\) with \(s > 3/2\) and $m \geq 0$. Let \(Q_h^m\) be the numerical solution of the ETD1 scheme \eqref{etd1}.  Then when $\tau \le (2C_{inv}C_{err,1})^{-\frac{1}{3-r}} h^{\frac{3}{2(3-r)}}, r \in [0,3)$, there exists a positive constant \(C\), independent of \(N\) and \(\tau\), such that
	\begin{align}
		\|Q(t_m) - Q_h^m\|_{2} \leq C (\tau + h^s).
	\end{align}
And when $\tau \le (2C_{inv}C_{err,1})^{-\frac{1}{3-r}} h^{\frac{3}{2(3-r)}}, r \in [1,3)$, there exists a positive constant \(C\), independent of \(N\) and \(\tau\), such that
	\begin{align}
		\|Q(t_m) - Q_h^m\|_{\infty} \leq C (\tau + h^{s-\frac{3}{2}}).
	\end{align}
\end{theorem}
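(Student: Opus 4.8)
The plan is to decompose the total error with the Fourier spectral projection $\PN$ and control the two resulting pieces by different mechanisms. Writing $Q(t_m)-Q_h^m = \big(Q(t_m)-\PN Q(t_m)\big)+\big(\PN Q(t_m)-Q_h^m\big)=:\eta^m+e^m$, the projection part is dispatched by standard spectral approximation theory: for $Q\in H^s$ one has $\|\eta^m\|_2\le Ch^{s}\|Q(t_m)\|_{H^s}$, and since $s>3/2$ the Sobolev-type estimate $\|\eta^m\|_\infty\le Ch^{s-3/2}\|Q(t_m)\|_{H^s}$ holds as well. All the real work is in the auxiliary error $e^m$, which is a trigonometric polynomial and is therefore amenable to inverse inequalities; this is what ultimately ties the two estimates to the step-size restriction.

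To obtain a recursion for $e^m$, I would subtract the ETD1 update \eqref{etd1h} from the Duhamel representation \eqref{1.9a} applied to $\PN Q$, yielding
\begin{align}
 e^{m+1}=e^{\tau\mathcal{L}_h}e^{m}+\int_0^\tau e^{(\tau-\xi)\mathcal{L}_h}\big(\mathcal{N}_h(\PN Q(t_m))-\mathcal{N}_h(Q_h^m)\big)\dd\xi+R^m,\no
\end{align}
where $R^m$ gathers the temporal consistency error from freezing $\mathcal{N}$ at $t_m$ together with the commutator between $\PN$ and the nonlinearity. The $C^4([0,T];H^s)$ regularity and a Taylor expansion in time give $\|R^m\|_2\le C(\tau^2+\tau h^s)$, i.e.\ a local truncation of order $\tau^2$ plus a projection contribution. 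Since $\mathcal{L}_h=\mathcal{L}_{h,\kappa_2}-\kappa_1 I$ and Lemma \ref{5_1} shows $\mathcal{L}_{h,\kappa_2}$ generates a discrete $L^2$-contraction semigroup, the propagator obeys $\|e^{\tau\mathcal{L}_h}\|_2\le e^{-\kappa_1\tau}$, which provides the contractive prefactor that absorbs the zeroth-order stabilization built into $\mathcal{N}_h$.

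The crux, and the main obstacle, is the nonlinear term. In contrast to the semi-discrete case, the Fourier spectral discretization does \emph{not} preserve the maximum bound principle pointwise, so the uniform $L^\infty$ bound on $Q_h^m$ needed to make the cubic $\mathcal{N}_h$ locally Lipschitz is unavailable a priori and must be bootstrapped. I would proceed by induction on $m$: assuming $\|e^j\|_2\le C_{err,1}(\tau+h^s)$ for $j\le m$, the inverse inequality $\|e^{m}\|_\infty\le C_{inv}h^{-3/2}\|e^{m}\|_2$ and the $L^\infty$ bound on $\PN Q$ confine $Q_h^m$ to a fixed ball $\{\|Q\|_F\le a+1\}$ provided $C_{inv}h^{-3/2}C_{err,1}(\tau+h^s)\le 1$; this is precisely where the restriction $\tau\le(2C_{inv}C_{err,1})^{-1/(3-r)}h^{3/(2(3-r))}$ enters, the exponents arising from balancing the cubic growth of $f$ against the three-dimensional inverse estimate. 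On that ball the polynomial nonlinearity is $L^2$-Lipschitz, so the recursion reduces to $\|e^{m+1}\|_2\le(1+C\tau)\|e^m\|_2+C(\tau^2+\tau h^s)$, and the discrete Gronwall lemma closes the induction and gives $\|e^m\|_2\le C(\tau+h^s)$; combining with $\|\eta^m\|_2$ proves the $L^2$ bound for $r\in[0,3)$.

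For the $L^\infty$ estimate I would run the analogous recursion in the discrete maximum norm, which forces a fully-discrete $L^\infty$-stability of the propagators $\varphi_\gamma(\tau\mathcal{L}_h)$ uniform in $h$ and $\tau$, together with the $L^\infty$-Lipschitz bound on the confined region and the projection estimate $\|\eta^m\|_\infty\le Ch^{s-3/2}$. Establishing this maximum-norm stability is the most delicate ingredient: the diagonalization in the HSTF basis gives real nonpositive eigenvalues $\lambda_{\boldsymbol{k}}^{(j)}\le-\kappa_1$ from \eqref{eigenvalues}, but the curl contribution $\tfrac{jL_4}{2}|\boldsymbol{k}|$ makes the operator matrix-valued per wavevector and obstructs the per-mode maximum-principle argument of Lemma \ref{3.1}, so one must instead exploit the orthonormality and explicit structure of the HSTF eigentensors. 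Taming the cubic term directly in $\|\cdot\|_\infty$ requires one further application of the inverse inequality, which is what narrows the admissible parameters to $r\in[1,3)$ and yields the rate $C(\tau+h^{s-3/2})$.
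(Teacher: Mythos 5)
There is a genuine gap: your bootstrap cannot be closed under the time-step restriction stated in the theorem. You work directly with the discrete error $e^m=\PN Q(t_m)-Q_h^m$, for which the best available $L^2$ bound is first order, $\|e^m\|_2\le C_{err,1}(\tau+h^s)$. Feeding this into the 3D inverse inequality gives $\|e^m\|_\infty\le C_{inv}h^{-3/2}C_{err,1}(\tau+h^s)$, and making this $\le 1$ forces $\tau\lesssim h^{3/2}$ — a far more restrictive condition than the hypothesis $\tau\le (2C_{inv}C_{err,1})^{-1/(3-r)}h^{3/(2(3-r))}$, which for $r=0$ only demands $\tau\lesssim h^{1/2}$. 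The paper's way around this is precisely the device you omit: it constructs a modified solution $\check{Q}(t)=Q(t)+\tau Q_{\tau,1}(t)+\tau^2 Q_{\tau,2}(t)$, with the correction functions defined by auxiliary linear ODEs chosen to cancel the leading consistency errors, so that $\check{Q}$ satisfies the ETD1 scheme up to a residual of order $\mathcal{O}(\tau^3+h^s)$. The modified error $\check{e}^m=Q_h^m-\check{Q}_N(t_m)$ is then $\mathcal{O}(\tau^3+h^s)$ in $L^2$, and it is $h^{-3/2}\tau^3\le\tfrac12$ — not $h^{-3/2}\tau$ — that produces the exponent $\frac{3}{2(3-r)}$. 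Your heuristic that the exponents come from "balancing the cubic growth of $f$ against the inverse estimate" is therefore not the actual mechanism; the $3$ is the elevated consistency order, and without the constructed solution the inductive $L^\infty$ confinement of $Q_h^m$ fails under the stated hypothesis. The final first-order rate is recovered at the end by writing $e^m=\check{e}^m+\tau P_N Q_{\tau,1}+\tau^2 P_N Q_{\tau,2}$.

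A secondary problem is your treatment of the $L^\infty$ estimate. You propose to rerun the recursion in the discrete maximum norm, which would require uniform $L^\infty$-stability of the matrix-valued propagators $\varphi_\gamma(\tau\mathcal{L}_h)$; you correctly observe that the curl contribution obstructs a per-mode maximum-principle argument, but you do not supply the missing stability, so this branch of the proof is not actually carried out. The paper never needs it: the $L^\infty$ error bound is obtained entirely from the inverse inequality applied to the $L^2$ bound of the third-order-accurate modified error (together with the $L^\infty$ bounds on $P_N Q_{\tau,1}$, $P_N Q_{\tau,2}$ and the spectral projection estimate $\|\eta^m\|_\infty\le Ch^{s-3/2}$), with the range $r\in[1,3)$ entering only so that the $\tau^r$ term absorbed in the bootstrap is dominated by $\tau$. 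Your projection-error estimates and the $L^2$ contraction of $e^{\tau\mathcal{L}_h}$ via Lemma \ref{5_1} are fine, but the core of the argument — the higher-order consistency construction — is absent, and the proof as proposed does not establish the theorem under its stated conditions.
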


\begin{theorem}\label{etd_error2}
	Suppose the exact solution \(Q(t)\) of the  BPs equation \eqref{1.6} is sufficiently smooth, \(Q \in C^4([0,T]; H^s_{\text{per}}(\Omega))\) with \(s > 3/2\) and $m \geq 0$. Let \(Q_h^m\) be the numerical solution of the ETDRK2 scheme \eqref{etd2}.  Then when $\tau \le (2C_{inv}C_{err,2})^{-\frac{1}{3-r}} h^{\frac{3}{2(3-r)}}, r \in [0,3)$, there exists a positive constant \(C\), independent of \(N\) and \(\tau\), such that

	\begin{align}
		\|Q(t_m) - Q_h^m\|_{2} \leq C (\tau^2 + h^s).
	\end{align}
And when $\tau \le (2C_{inv}C_{err,2})^{-\frac{1}{3-r}} h^{\frac{3}{2(3-r)}}, r \in [2,3)$, there exists a positive constant \(C\), independent of \(N\) and \(\tau\), such that
	\begin{align}
		\|Q(t_m) - Q_h^m\|_{\infty} \leq C (\tau^2 + h^{s-\frac{3}{2}}).
	\end{align}
\end{theorem}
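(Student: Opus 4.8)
The plan is to prove Theorem \ref{etd_error2} by splitting the error into a spatial (spectral) part and a temporal part, deriving the local truncation error of the two-stage scheme, and closing a discrete Gronwall inequality; the passage to the $L^\infty$ norm and the preservation of an a priori bound on the numerical solution are then handled by an inverse-inequality bootstrap governed by the stated time-step restriction. Let $P_N$ denote the $L^2$-orthogonal projection onto the discrete Fourier space $\widehat{\mathcal{M}}_h$ and write
\[
Q(t_m)-Q_h^m=\big(Q(t_m)-P_NQ(t_m)\big)+\big(P_NQ(t_m)-Q_h^m\big)=:\rho^m+\theta^m .
\]
Since $Q\in C^4([0,T];H^s_{\mathrm{per}})$ with $s>3/2$, standard spectral approximation theory bounds the projection part by $\|\rho^m\|_2\le Ch^s$, and the three-dimensional Sobolev embedding $H^s\hookrightarrow L^\infty$ gives $\|\rho^m\|_\infty\le Ch^{s-3/2}$. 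It remains to control the finite-dimensional error $\theta^m\in\widehat{\mathcal{M}}_h$.

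Next I would establish the consistency of ETDRK2. Feeding the exact solution into Duhamel's formula \eqref{1.9a} on $[t_m,t_{m+1}]$ and Taylor-expanding $\mathcal N(Q(t_m+\xi))$ to first order in $\xi$, the two-stage quadrature of \eqref{etd2} — which replaces $\mathcal N$ by its linear interpolant between the stages and integrates it exactly against $\varphi_1$ and $\varphi_2$ — reproduces the integral up to a remainder $R^m$ with $\|R^m\|_2\le C\tau^3$; the $C^4$ regularity supplies the required bound on the second time-derivative of $\mathcal N(Q)$, and it is precisely the exact action of $\varphi_2$ that upgrades the naive $O(\tau^2)$ interpolation defect to $O(\tau^3)$. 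Applying $P_N$ and commuting it with the Fourier-diagonal operators $\varphi_\gamma(\tau\mathcal L_h)$, one obtains a perturbed ETDRK2 recursion satisfied by $P_NQ(t_m)$; subtracting the scheme \eqref{etd2h} yields a closed two-stage recursion for $\theta^m$ forced by $R^m$, by the projection defect $\mathcal N_h(P_NQ)-P_N\mathcal N(Q)=O(h^s)$, and by the differences of $\mathcal N_h$ at the two stages.

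For the $L^2$ estimate I would bound the propagators using Lemma \ref{5_1}: since $\mathcal L_h=\mathcal L_{h,\kappa_2}-\kappa_1 I$ with $\mathcal L_{h,\kappa_2}$ dissipative, one has $\|\varphi_0(\tau\mathcal L_h)\|_2\le e^{-\kappa_1\tau}\le1$ and $\|\tau\varphi_1(\tau\mathcal L_h)\|_2,\ \|\tau\varphi_2(\tau\mathcal L_h)\|_2\le C\tau$. The continuous MBP of Theorem \ref{thm2} keeps $Q(t_m)$, hence $P_NQ(t_m)$ up to an $O(h^{s-3/2})$ overshoot, inside the ball $\|\cdot\|_\infty\le a$, and the uniform bounds \eqref{etd1_bound} and \eqref{etd2_bound} do the same for $Q_h^m$, so the polynomial nonlinearity is Lipschitz with a uniform constant $L$ on this ball. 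Substituting these bounds into the recursion gives $\|\theta^{m+1}\|_2\le(1+C\tau)\|\theta^m\|_2+C\tau(\tau^2+h^s)$, and the discrete Gronwall lemma together with $\theta^0=0$ yields $\|\theta^m\|_2\le C(\tau^2+h^s)$ for $t_m\le T$; combined with the bound on $\rho^m$ this proves the $L^2$ estimate.

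Finally, the $L^\infty$ estimate is obtained from the inverse inequality $\|\theta^m\|_\infty\le C_{inv}h^{-3/2}\|\theta^m\|_2$ together with the same recursion run in the discrete $L^\infty$ norm, the spatial contribution entering through $\|\rho^m\|_\infty\le Ch^{s-3/2}$ and the temporal contribution through the $O(\tau^2)$ consistency; the time-step restriction $\tau\le(2C_{inv}C_{err,2})^{-1/(3-r)}h^{3/(2(3-r))}$ is exactly calibrated so that this transfer keeps $Q_h^m$ in the MBP ball and produces the stated order, the more demanding range $r\in[2,3)$ reflecting the finer $\tau$-$h$ balance required for the second-order scheme. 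The main obstacle I anticipate is that everything above must be carried out as a single induction on $m$: because $f$ is only locally Lipschitz, the Lipschitz estimate for $\mathcal N_h$ is available only while $Q_h^m$ remains in the ball of radius $a$, yet that membership is itself a consequence of the error bound one is trying to prove. Closing this bootstrap — using the inverse inequality and the time-step condition to promote the $L^2$ error into the $L^\infty$ control that re-certifies $\|Q_h^m\|_\infty\le a$ at each step, while simultaneously preserving the second-order consistency of the two-stage update — is the delicate core of the argument.
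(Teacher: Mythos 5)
Your overall skeleton (projection error plus discrete error, $\mathcal{O}(\tau^3)$ local consistency, discrete Gronwall, inverse-inequality bootstrap) matches the paper's, but there is a genuine gap at precisely the step you flag as delicate: the bootstrap cannot be closed with the error decomposition you propose. Working directly with $\theta^m=P_NQ(t_m)-Q_h^m$, the best global bound your Gronwall argument yields is $\|\theta^m\|_2\le C(\tau^2+h^s)$, so the inverse inequality \eqref{for37} gives $\|\theta^m\|_\infty\le C_{inv}h^{-3/2}C(\tau^2+h^s)$. Keeping this quantity of order one (which is what re-certifies $\|Q_h^m\|_\infty\le M_Q+1$ and hence the uniform Lipschitz constant for the polynomial nonlinearity at the next step) forces $\tau\lesssim h^{3/4}$. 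The theorem's hypothesis, however, only assumes $\tau\le(2C_{inv}C_{err,2})^{-1/(3-r)}h^{3/(2(3-r))}$, which for $r$ near $0$ is the much weaker $\tau\lesssim h^{1/2}$; taking $\tau=h^{1/2}$ makes your bound $h^{-3/2}\tau^2=h^{-1/2}$ blow up as $h\to 0$. So your induction does not close under the stated time-step restriction, and the claim that the restriction is ``exactly calibrated'' for your decomposition is not borne out by the exponents.

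The missing idea is the paper's constructed (corrected) solution: one perturbs the exact solution to $\breve{Q}(t)=Q(t)+\tau^2Q_{\tau,3}(t)$ as in \eqref{l5_26}, where $Q_{\tau,3}$ solves an auxiliary linear ODE designed to cancel the $\mathcal{O}(\tau^2)$ term $\widetilde{G}_3$ of the truncation error, so that $\breve{Q}_N$ satisfies the ETDRK2 recursion with residual $\mathcal{O}(\tau^3+h^s)$ and the modified error $\breve{e}^m=Q_h^m-\breve{Q}_N(t_m)$ obeys an $L^2$ bound of order $\tau^3+h^s$ (Theorem \ref{thm:h1_error_etd2} and the end of its proof). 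The bootstrap then only requires $C_{inv}C_{err,2}h^{-3/2}\tau^3\le\tfrac12$, which is exactly the stated condition \eqref{for38}, and the final $\mathcal{O}(\tau^2)$ rate is recovered afterwards because $\|Q_N(t_m)-\breve{Q}_N(t_m)\|_2=\tau^2\|P_NQ_{\tau,3}(t_m)\|_2=\mathcal{O}(\tau^2)$. A secondary subtlety you would also need to handle on this route: the natural definition of $Q_{\tau,3}$ involves $Q_N(t_{m+1})$ and is therefore implicit; the paper replaces it by the ETD1 predictor $\widetilde{Q}_N(t_{m+1})$ so that the correction ODE remains explicit and well-posed.
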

The proofs of Theorems \ref{etd_error1}-\ref{etd_error2} will be given in the subsequent subsections.
\subsection{Convergence Analysis for the ETD1 Scheme}~

\textbf{High-Order Consistency Analysis via Constructed Solutions.}
A standard error analysis for the first-order ETD1 scheme yields a local truncation error of \(\mathcal{O}(\tau^2)\). This low-order consistency is insufficient for establishing a uniform \(L^\infty\) bound without imposing an overly restrictive time step constraint. To remedy this, we introduce a higher-order approximate solution denoted by \(\check{Q}(t)\).
Let \(Q(t)\) be the exact solution to \eqref{1.8}. The constructed solution \(\check{Q}(t)\) for the ETD1 scheme associated with time step \(\tau\) is defined as
\begin{equation}
    \check{Q}(t) = Q(t) +  \tau  Q_{\tau,1}(t) +  \tau ^2 Q_{\tau,2}(t),
\end{equation}
where the correction terms \(Q_{\tau,1}(t)\) and \(Q_{\tau,2}(t)\) depend  on \(Q(t)\) and its time derivatives, which  are constructed to ensure that \(\check{Q}(t)\) satisfies the numerical scheme up to a high-order residual. We now  provide the specific construction process of auxiliary functions \(Q_{\tau,1}\) and \(Q_{\tau,2}\).

\textbf{Step 1:} Construct the correction function \(Q_{\tau,1}\).
Let \(Q_N(t) = \PN Q(t)\) be the standard Fourier projection of the exact solution \(Q(t)\). Taking the Fourier projection of   \eqref{1.8} over the interval \([t_m, t_{m+1}]\), we derive the evolution equation satisfied by \(Q_N(t)\):
\begin{equation}
	\frac{d Q_N}{dt} = \Lh Q_N + \Nh(Q_N(t)) + \mathcal{O}\left(h^{s}\right),\quad t \in [t_m, t_{m+1}].\label{f6}
\end{equation}
To analyze the temporal error, we expand the nonlinear term \(\Nh(Q_N(t))\) around \(t=t_m\):
\begin{equation}
    \Nh(Q_N(t)) = \Nh(Q_N(t_m)) +  \tau G_1(t_m) + \mathcal{O}( \tau ^2),\label{fiv5}
\end{equation}
 where \(G_1(t_m) := \frac{d}{dt}\Nh(Q_N(t))\Big|_{t=t_m} = \Nh'(Q_N(t_m))Q_{N,t}(t_m)\).
Substituting \eqref{fiv5} into \eqref{f6}, we obtain
\begin{equation}
    \frac{d Q_N}{dt} = \Lh Q_N + \Nh(Q_N(t_m)) +  \tau  G_1(t_m) + \mathcal{O}(\dt^2 + h^s),\quad t \in [t_m, t_{m+1}]. \label{eq:q_n_expanded}
\end{equation}

The first-order temporal correction function \(Q_{\tau,1}(t)\) is designed to cancel the leading-order error term \( \tau  G_1(t_m)\) in the above equation.
Therefore, \(Q_{\tau,1}(t)\) is defined as the solution to the following linear ordinary differential equation (ODE) system for its spectral coefficients:
\begin{equation} \label{eq:q1_ode}
	\begin{cases}
		\frac{d Q_{\tau,1}(t)}{dt} = \Lh Q_{\tau,1}(t) + \Nh'(Q_N(t_m))Q_{\tau,1}(t_m) - G_1(t_m), \quad t \in [t_m, t_{m+1}], \\
		Q_{\tau,1}(0) = \mathbf{0}.
	\end{cases}
\end{equation}
This is a linear system with constant coefficients over the interval \([t_m, t_{m+1}]\).
The existence and uniqueness of its solution are standard. The solution \(Q_{\tau,1}(t)\) depends only on the state \(Q_N(t_m)\).

\textbf{Step 2:} Construct the correction function $Q_{\tau,2}$.
Now, let us define a first-order corrected solution \(\check{Q}_{\tau,1}(t) = Q_N(t) + \dt P_N Q_{\tau,1}(t)\). We now derive the evolution equation satisfied by \(\check{Q}_{\tau,1}(t)\). Differentiating \(\check{Q}_{\tau,1}(t)\) with respect to time and substituting the equations for \(Q_N(t)\) and \(Q_{\tau,1}(t)\), we obtain:
\begin{equation}
    \frac{d \check{Q}_{\tau,1}}{dt} = \Lh \check{Q}_{\tau,1} + \Nh(\check{Q}_{\tau,1}(t_m)) + \dt^2 G_2(t_m) + \mathcal{O}(\dt^3 + h^s), \quad t \in [t_m, t_{m+1}], \label{eq:u1_hat_evolution}
\end{equation}
where \(G_2(t_m)\) represents the next-order error term  that depends on \(Q_N(t_m)\) and its derivatives.

Following the same logic, the second-order temporal correction function, \(Q_{\tau,2}(t)\), is designed to cancel the next-order error term \(\dt^2 G_2(t_m)\) from the evolution of \(\check{Q}_{\tau,1}(t)\).

Thus, \(Q_{\tau,2}(t)\) is defined as the solution to the following linear ODE system:
\begin{equation} \label{eq:q2_ode}
    \begin{cases}
        \frac{d Q_{\tau,2}(t)}{dt} = \Lh Q_{\tau,2}(t) + \Nh'(Q_N(t_m))Q_{\tau,2}(t_m) - G_2(t_m), \quad t \in [t_m, t_{m+1}], \\
        Q_{\tau,2}(0) = \mathbf{0}.
    \end{cases}
\end{equation}
This system is also linear with constant coefficients over the interval \([t_m, t_{m+1}]\). The existence and uniqueness of its solution are guaranteed under standard assumptions. The solution \(Q_{\tau,2}(t)\) depends only on the state \(Q_N(t_m)\) and the previously constructed correction \(Q_{\tau,1}(t)\).

\textbf{Step 3:}  Construct the final constructed solution \(\check{Q}(t)\).
Define the grid function \(\check{Q}(t) := \check{Q}_{\tau,1}(t) + \dt^2 P_N Q_{\tau,2}(t)\). Combining \eqref{eq:u1_hat_evolution} and \eqref{eq:q2_ode} leads to the evolution equation for \(\check{Q}(t)\):
\begin{equation}
    \frac{d \check{Q}}{dt} = \Lh \check{Q} + \Nh(\check{Q}(t_m)) + \mathcal{O}(\dt^3 + h^s), \quad t \in [t_m, t_{m+1}]. \label{u_final_evolution}
\end{equation}
To arrive at this result, we have used the fact that
\begin{align*}
    \Nh(\check{Q}(t)) &= \Nh(\check{Q}_{\tau,1}(t) + \dt^2 Q_{\tau,2}(t))
	= \Nh(\check{Q}_{\tau,1}(t)) + \dt^2 \Nh'(\check{Q}_{\tau,1}(t)) Q_{\tau,2}(t) + \mathcal{O}(\dt^4) \\
	&= \Nh(\check{Q}_{\tau,1}(t_m)) + \dt^2 \Nh'(\check{Q}_{\tau,1}(t_m)) Q_{\tau,2}(t) + \mathcal{O}(\dt^3) \\
	&= \Nh(\check{Q}(t_m)) + \mathcal{O}(\dt^3).
\end{align*}
The evolution equation \eqref{u_final_evolution} demonstrates that our constructed solution \(\check{Q}(t)\) satisfies the semi-discrete PDE up to a residual of order \(\mathcal{O}(\dt^3 + h^s)\). This is precisely the high-order consistency required to proceed with the main error analysis.

\begin{remark}
To facilitate a high-order error analysis, we employ a constructive approach to define a modified solution $\check{Q}(t)$. This method is systematic and hierarchical, ensuring the existence, uniqueness, and smoothness of the required correction functions, $Q_{\tau,1}(t)$ and $Q_{\tau,2}(t)$, provided the exact solution $Q(t)$ is sufficiently regular and standard assumptions on the operator $\mathcal{L}_h + \mathcal{N}_h'(Q)$ hold. The key outcome of this construction is that the local truncation error of the ETD1 scheme with respect to $\check{Q}(t)$ is elevated to order $\mathcal{O}(\Delta t^3)$, which is the cornerstone of our subsequent analysis.
\end{remark}

\textbf{\texorpdfstring{$L^\infty$}{L-infinity} Boundedness and Error Analysis.}
We define the  discrete error as \({e}^m :=  Q_h^m-{Q}_N(t_m) \) and
the modified discrete error as \(\check{e}^m :=  Q_h^m-\check{Q}_N(t_m) \).
Then we can obtain the relationship between the two errors:
\begin{align}
    {e}^m = \check{e}^m + \tau P_N Q_{\tau,1}(t_m) + \tau^2 P_N Q_{\tau,2}(t_m). \label{f21}
\end{align}
Integrating the semi-discrete equation \eqref{u_final_evolution} from \(t_m\) to \(t_m + \tau\), we obtain:
\begin{align}
	\check{Q}_N(t_m + \tau) = e^{\tau \Lh} \check{Q}_N(t_m) + \varphi_1( \tau \Lh)[ \Nh(\check{Q}_N(t_m))+ \check{\mathbf{R}}_{h\tau}^{(1)} ], \label{f8}
\end{align}
where \(\check{\mathbf{R}}_{h\tau}^{(1)}\) is the local truncation error term. And  through \eqref{u_final_evolution}, we also have
\begin{equation}
	\|\check{\mathbf{R}}_{h\tau}^{(1)}\|_{2} \le C_{R_1}(\tau^3 + h^s), \label{for3}
\end{equation}
with \(C_{R_1}\) being a constant depending on the exact solution \(Q\) and the final time \(T\).

The main steps of our analysis are as follows:
 We first assume a uniform \(L^\infty\) bound for the modified discrete error \(\check{e}^m\) up to time \(t_m\). This assumption allows us to establish the \(L^\infty\) boundedness of the numerical solution \(Q_h^m\) itself. With this boundedness, we can then derive a high-order estimate for the \(L^2\) norm of the error \(\check{e}^m\). Finally, we verify that the initial bootstrap assumption holds true under a suitable CFL condition on the time step \( \tau \) and spatial mesh size \( h \).

\textbf{Step 1:} We make a bootstrap assumption for the modified discrete error \(\check{e}^m\).
Assume there exists a time \(T > 0\) such that for all \(m \tau  \le T\), the modified discrete error satisfies
\begin{equation}
    \max_{0 \le m \le n} \|\check{e}^m\|_{\infty} \le 1.\label{for35}
\end{equation}
A direct consequence of this hypothesis is the \(L^\infty\) boundedness of the numerical solution \(Q_h^m\). By the triangle inequality and the definition of \(\check{e}^m\), we have
\begin{align}
	\|Q_h^m\|_{\infty} = \|\check{Q}_N(t_m) - \check{e}^m\|_{\infty} \le \|\check{Q}_N(t_m)\|_{\infty} + \|\check{e}^m\|_{\infty}.\label{f3}
\end{align}
Since \(Q\) is smooth, \(\check{Q}_N(t_m)\) is uniformly bounded, i.e., there exists a constant \(M_Q\) such that
\begin{equation}
	\|\check{Q}_N(t_m)\|_{\infty} \le M_Q, \quad \text{for all } t_m \le T.\label{f2}
\end{equation}
Then subtracting \eqref{f2} into \eqref{f3} and  using the bootstrap assumption \eqref{for35}, we can get
\begin{align}
	\|Q_h^m\|_{\infty} &\le \|\check{Q}_N(t_m)\|_{\infty} + \|\check{e}^m\|_{\infty} \le M_Q + 1. \label{for36}
\end{align}
Finally, for \(\Nh\) defined in \eqref{nonlinear_operator}, based on \eqref{f2} and \eqref{for36}, we can establish the following Lipschitz condition:
\begin{equation}
	\|\Nh(\check{Q}_N(t_m)) - \Nh(Q_h^m)\|_{2} \le K \|\check{e}^m\|_{2}, \label{eq:Nh_lipschitz}
\end{equation}
where the constant \(K\) depends on \(M_Q\) and the coefficients of \(\Nh\).

\textbf{Step 2:} We now establish a high-order estimate for the \(L^2\) norm of the modified discrete error \(\check{e}^m\).
\begin{theorem}[$L^2$ Error Estimate] \label{thm:h1_error}
Suppose the exact solution \(Q(t)\) of the  BPs equation \eqref{1.6} is sufficiently smooth,  \(Q \in C^4([0,T]; H^s_{\text{per}}(\Omega))\) with \(s > 3/2\). Let \(Q_h^m\) be the numerical solution of the ETD1 scheme \eqref{etd1}.   The modified discrete error of the ETD1 scheme \eqref{etd1h} satisfies the following estimate:
\begin{equation}
    \max_{0 \le m \le n} \|\check{e}^m\|_{2} \le C_{err,1}( \tau ^3 + h^s),
\end{equation}
for \(m \tau  \le T\), provided \(\check{e}^0=0\).
\end{theorem}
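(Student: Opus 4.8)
The plan is to derive a one-step recursion for the modified error $\check{e}^m$ and close it with a discrete Gronwall inequality. First I would write the ETD1 update \eqref{etd1h} for $Q_h^m$ and the exact one-step identity \eqref{f8} for the constructed solution $\check{Q}_N(t_m)$ over the same interval $[t_m,t_{m+1}]$, and subtract them. Since both share the linear propagator $e^{\tau\Lh}$ and the weight $\tau\varphi_1(\tau\Lh)$, all that survives is the nonlinear increment and the residual:
\begin{equation*}
\check{e}^{m+1} = e^{\tau\Lh}\check{e}^m + \tau\varphi_1(\tau\Lh)\bigl[\Nh(Q_h^m) - \Nh(\check{Q}_N(t_m))\bigr] - \tau\varphi_1(\tau\Lh)\check{\mathbf{R}}_{h\tau}^{(1)},
\end{equation*}
with $\check{e}^0 = 0$.

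Second, I would control the two operator factors in the discrete $L^2$ norm through the HSTF diagonalization. Writing $\Lh = \mathcal{L}_{h,\kappa_2} - \kappa_1 I$ and invoking the contraction property of $e^{\tau\mathcal{L}_{h,\kappa_2}}$ from Lemma \ref{5_1}, the shift yields $\|e^{\tau\Lh}\|_2 \le e^{-\kappa_1\tau}$. In the HSTF eigenbasis every eigenvalue obeys $\lambda_{\boldsymbol{k}}^{(j)} \le -\kappa_1 < 0$, so $\|\tau\varphi_1(\tau\Lh)\|_2 = \max_j \tfrac{1 - e^{\tau\lambda_{\boldsymbol{k}}^{(j)}}}{|\lambda_{\boldsymbol{k}}^{(j)}|} \le \tfrac{1 - e^{-\kappa_1\tau}}{\kappa_1} \le \tau$, using that $\mu \mapsto (1-e^{-\tau\mu})/\mu$ is decreasing. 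To avoid degrading the estimate through the large Lipschitz constant, I would split $\Nh(Q_h^m) - \Nh(\check{Q}_N(t_m)) = (\kappa_1+\kappa_2)\check{e}^m + (f(Q_h^m) - f(\check{Q}_N(t_m)))$ and fold the $\kappa_1$ piece into the propagator; a scalar check in the eigenbasis shows $\|e^{\tau\Lh} + \kappa_1\tau\varphi_1(\tau\Lh)\|_2 \le 1$, leaving only the milder effective constant $K' := \kappa_2 + L_f$, where $L_f$ is the local Lipschitz constant of $f$.

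Third, I would invoke the Lipschitz bound \eqref{eq:Nh_lipschitz}, which is legitimate precisely because the bootstrap hypothesis \eqref{for35} forces $\|Q_h^m\|_\infty \le M_Q + 1$ for all $m \le n$ and hence makes $L_f$ uniform; combined with the consistency estimate \eqref{for3} this gives
\begin{equation*}
\|\check{e}^{m+1}\|_2 \le (1 + K'\tau)\|\check{e}^m\|_2 + \tau\, C_{R_1}(\tau^3 + h^s).
\end{equation*}
A discrete Gronwall inequality with $\check{e}^0 = 0$ then sums the geometric factors $(1+K'\tau)^j \le e^{K'T}$ over $m\tau \le T$ and delivers $\max_{0\le m\le n}\|\check{e}^m\|_2 \le C_{err,1}(\tau^3 + h^s)$ with $C_{err,1} = C_{R_1}(e^{K'T}-1)/K'$, independent of $\tau$ and $h$.

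The main obstacle is the uniform operator bounds: showing that the stabilized propagator $e^{\tau\Lh} + \kappa_1\tau\varphi_1(\tau\Lh)$ is a contraction in $L^2$ and that $\tau\varphi_1(\tau\Lh)$ has norm $\mathcal{O}(\tau)$. Both rest on the HSTF diagonalization and the negativity of the shifted eigenvalues from Lemma \ref{5_1}; the delicate point is that these bounds must be genuinely uniform over all five helicity modes and all wavevectors $\boldsymbol{k}$, so that the constants do not deteriorate as $h \to 0$. By contrast, the verification that the bootstrap hypothesis \eqref{for35} legitimizes the Lipschitz step is routine once the operator estimates are in place.
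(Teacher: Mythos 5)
Your proposal is correct and follows essentially the same route as the paper: subtract the ETD1 update from the constructed-solution identity \eqref{f8}, bound $e^{\tau\Lh}$ and $\tau\varphi_1(\tau\Lh)$ via Lemma \ref{5_1}, invoke the bootstrap-enabled Lipschitz bound \eqref{eq:Nh_lipschitz} and the residual estimate \eqref{for3}, and close with discrete Gronwall. Your folding of the $\kappa_1$-stabilization into the propagator at the operator level is just a mode-wise restatement of the paper's scalar identity $e^{-\kappa_1\tau}+\kappa_1^{-1}(1-e^{-\kappa_1\tau})K = 1+\kappa_1^{-1}(1-e^{-\kappa_1\tau})(K-\kappa_1)$, and yields the same effective constant $K'$.
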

\begin{proof}
Subtracting \eqref{etd1h} from \eqref{f8}, we obtain the error equation for \(\check{e}^m\):
\begin{equation}
	\check{e}^{m+1} = e^{ \tau \Lh} \check{e}^m +  \tau  \varphi_1( \tau \Lh) \left[ \Nh(\check{Q}_N(t_m)) - \Nh(Q_h^m)+\check{\mathbf{R}}_{h\tau}^{(1)} \right]. \label{eq:error_eqn}
\end{equation}
Taking the \(L^2\) norm defined by \eqref{for1} on both sides of \eqref{eq:error_eqn} and using Lemma \ref{5_1}, \eqref{eq:Nh_lipschitz} and the triangle inequality, we can derive the following estimate:
\begin{align*}
	\|\check{e}^{m+1}\|_{2} &\le \|e^{ \tau \Lh} \check{e}^m\|_{2} +  \tau  \|\varphi_1( \tau \Lh) \left[ \Nh(\check{Q}_N(t_m)) - \Nh(Q_h^m) \right]\|_{2} + \|\tau \varphi_1( \tau \Lh)\check{\mathbf{R}}_{h\tau}^{(1)}\|_{2}\\
	&\le e^{-{\kappa_1} \tau} \|\check{e}^m\|_{2} +  \kappa_1^{-1}(1 - e^{-{\kappa_1} \tau}) \|\Nh(\check{Q}_N(t_m)) - \Nh(Q_h^m)\|_{2} + C_{R_1}\tau( \tau ^3 + h^s)\\	&\le e^{-{\kappa_1} \tau} \|\check{e}^m\|_{2} +  \kappa_1^{-1}(1 - e^{-{\kappa_1} \tau}) K \|\check{e}^m\|_{2} + C_{R_1}\tau( \tau ^3 + h^s)\\&\le  \|\check{e}^m\|_{2} +  \kappa_1^{-1}(1 - e^{-{\kappa_1} \tau}) (K - \kappa_1) \|\check{e}^m\|_{2} + C_{R_1}\tau( \tau ^3 + h^s).
\end{align*}
Since \(x^{-1}(1-e^{-x}) \le 1\) for all \(x > 0\), we have
\begin{align*}
	\|\check{e}^{m+1}\|_{2}
	&\le \|\check{e}^m\|_{2} +  \tau (K - \kappa_1  )\|\check{e}^m\|_{2} + C_{R_1}\tau( \tau ^3 + h^s)\\
	&\le \|\check{e}^m\|_{2} +  \tau K' \|\check{e}^m\|_{2} + C_{R_1}\tau( \tau ^3 + h^s),\\&\le (1+K' \tau )\|\check{e}^m\|_{2} + C_{R_1}\tau( \tau ^3 + h^s),
\end{align*}
where \(K' = K - \kappa_1\).
Based on  \(\check{e}^0 = 0\), a standard application of the discrete Gronwall's inequality for \(m \tau  \le T\) gives
\[
\|\check{e}^m\|_{2} \le \frac{C_{R_1}\tau( \tau ^3 + h^s)}{K' \tau } (e^{K'T} - 1) := C_{err,1}( \tau ^3 + h^s).
\]
\end{proof}

\textbf{Step 3:}  We verify the bootstrap assumption and establish the uniform \(L^\infty\) bound for the numerical solution \(Q_h^m\).

From \cite{Cai2017}, we have the following inverse inequality for any \(v_N \in \VN\):
\begin{align}
	\|v_N\|_{\infty} &\le C_{inv} N^{d/2} \|v_N\|_{2} \le C_{inv} h^{-\frac{d}{2}} \|v_N\|_{2}, \quad \forall v_N \in \VN, \label{for37}
\end{align}
where \(C_{inv}\) is a constant independent of \(N\) and \(h\), \(h \sim \frac{1}{N}\), and \(d\) is the spatial dimension.
Based on the inequality \eqref{for37} when $d=3$, for our error function \(\check{e}^{m+1}\), we have
\begin{align}
\|\check{e}^{m+1}\|_{\infty} &\le C_{inv} h^{-\frac{3}{2}} \|\check{e}^{m+1}\|_{2}
    \le C_{inv} h^{-\frac{3}{2}}  C_{err,1}( \tau ^3 + h^s). \label{eq:Linf_error_bound_final}
\end{align}
To close the bootstrap argument, we must show that the right-hand side of \eqref{eq:Linf_error_bound_final} is less than or equal to 1. This requires
\[
	C_{inv} C_{err,1} h^{-\frac{3}{2}}  ( \tau ^3 + h^s) \le 1.
\]
We split this condition into spatial and temporal parts and obtain the following inequalities:
\begin{align*}
	C_{inv} C_{err,1} h^{s-\frac{3}{2}}  &\le \frac{1}{2}, \quad
	C_{inv} C_{err,1} h^{-\frac{3}{2}}  \tau ^3 \le \frac{1}{2}.
\end{align*}

The spatial part requires \(h^{s-\frac{3}{2}}  \le \frac{1}{2 C_{inv} C_{err,1}}\), which is satisfied if \(s \ge \frac{3}{2}\).
The temporal part imposes the critical constraint:
\begin{align}
	2C_{inv} C_{err,1} h^{-\frac{3}{2}}  \tau ^3 &\le 1 \no \\
	\implies 2 C_{inv} C_{err,1} h^{-\frac{3}{2}}  \tau ^3 &\le \tau^r  \le 1,\text{with}\quad r \in [0,3),\no \\
	\implies \tau^{3-r} &\le (2C_{inv}C_{err,1})^{-1} h^{\frac{3}{2}}\no\\
	\implies \tau &\le (2C_{inv}C_{err,1})^{-\frac{1}{3-r}} h^{\frac{3}{2(3-r)}},\quad r \in [0,3). \label{for34}
\end{align}
Then we find that for a sufficiently small choice of \( \tau \) and \(h\) satisfying the CFL condition \eqref{for34}, there exists  \(\norm{\check{e}^{m+1}}{}_\infty \le 1\).
Consequently, the uniform \(L^\infty\) bound for the numerical solution follows directly from:
\begin{align}
    \|Q_h^{m+1}\|_{\infty} \le \|\check{Q}_N(t_{m+1})\|_{\infty} + \|\check{e}^{m+1}\|_{\infty} \le M_Q + 1. \label{etd1_bound}
\end{align}

These results validate our initial bootstrap hypothesis \eqref{for35} and \eqref{for36}, which therefore hold for all time \(m \tau  \le T\).

\textbf{Step 4:} We now present the detailed error analysis for Theorem \ref{etd_error1}.
The total error \(e_h^m = Q(t_m) - Q_h^m\) can be decomposed into the approximation error \(\eta^m = Q(t_m) - \PN Q(t_m)\) and the discrete error \(e^m = \PN Q(t_m) - Q_h^m= Q_N(t_m) - Q_h^m \). Based on the triangle inequality, we can obtain the final error estimate of \(L_2\) and \(L_\infty\) norm.

    For \(\eta^m\), using the standard approximation theory for spectral methods and the Sobolev embedding theorem, we have
    \begin{align}
        \|\eta^m\|_{2} &= \|Q(t_m) - \PN Q(t_m)\|_{2} \le C_p h^s. \label{l5_24}\\
        \|\eta^m\|_{\infty}& = \|Q(t_m) - \PN Q(t_m)\|_{\infty}\le C' h^{s-\frac{3}{2}} \|Q(t_m)\|_{H^{\frac{3}{2}}} \le C_p h^{s-\frac{3}{2}}.\label{l5_25}
    \end{align}
    For the \(L_2\) norm  of \(e_h^m\), based on \eqref{f21}, \eqref{l5_24} and Theorem \ref{thm:h1_error}, we have
\begin{align*}
	\|e_h^m\|_{2} =& \|Q_N(t_m) - Q_h^m \|_2+\|\eta^m\|_{2}\\
	=& \|Q_N(t_m) -\check{Q}_N(t_m)\|_2+\|\check{Q}_N(t_m)- Q_h^m\|_2 +C_p h^s\\
    	=& \tau \|\PN Q_{\tau,1}(t_m)\|_2+\tau^2 \|\PN Q_{\tau,2}(t_m)\|_2+\|\check{e}^m\|_2+C_p h^s\\
	=& \tau \|\PN Q_{\tau,1}(t_m)\|_2+\tau^2 \|\PN Q_{\tau,2}(t_m)\|_2+ C_{err,1}( \tau ^3 + h^s)+C_p h^s\\
    \le& C (\tau + h^{s}).
\end{align*}
For the \(L^\infty\) norm \(e_h^m\), when \(r \in [1,3)\), based on  \eqref{f21} \eqref{eq:Linf_error_bound_final}  and \eqref{l5_25}, we have
\begin{align*}
	\|e_h^m\|_{\infty} =& \|Q_N(t_m) - Q_h^m \|_\infty+\|\eta^m\|_\infty\\
	=& \|Q_N(t_m) -\check{Q}_N(t_m)\|_\infty+\|\check{Q}_N(t_m)- Q_h^m\|_\infty+C_p h^{s-\frac{3}{2}} \\
	=& \tau \|\PN Q_{\tau,1}(t_m)\|_\infty+\tau^2 \|\PN Q_{\tau,2}(t_m)\|_\infty+\tau^r+C_{inv}  C_{err,1} h^{s-\frac{3}{2}}+C_p h^{s-\frac{3}{2}}\\
	\le& C (\tau +  h^{s-\frac{3}{2}}).
\end{align*}
Then we get the conclusion of Theorem \ref{etd_error1}.
\subsection{Convergence Analysis for the ETDRK2 Scheme}~\\
In this section, we extend the previous analysis to the second-order ETDRK2 scheme \eqref{etd2h}.
 The main goal is to establish a uniform \(L^\infty\) bound for the numerical solution \(Q_h^m\) generated by the ETDRK2 scheme, under a suitable CFL-type condition on the time step \( \tau \).
 Additionally, we aim to derive an optimal \(L^2\) and \(L^\infty\) error estimate between the exact solution and the numerical solution.

\textbf{Step 1:}
 We first perform a high-order consistency analysis via constructed solutions.
Let \(Q(t)\) be the exact solution to \eqref{1.8}. We define the constructed solution \(\breve{Q}(t)\) for the ETDRK2 scheme as
\begin{equation}
	\breve{Q}(t) = Q(t) +  \tau ^2 Q_{\tau,3}(t),\label{l5_26}
\end{equation}
where the correction terms \(Q_{\tau,3}(t)\) depends  on \(Q(t)\) and its time derivatives, which  are constructed to ensure that \(\breve{Q}(t)\) satisfies the numerical scheme up to a high-order residual.
  Taking the Fourier projection of   \eqref{1.8} over the interval \([t_m, t_{m+1}]\) and using  a linear interpolation of the nonlinear term \(\Nh(Q_N(t))\) at \(t_m\) and \(t_{m+1}\), we derive the evolution equation satisfied by \(Q_N(t)\):
\begin{equation}
	\frac{d Q_N}{dt} = \Lh Q_N + (1 - \theta) \Nh(Q_N(t_m)) + \theta \Nh(Q_N(t_{m+1})) + \tau^2 G_3(t_m) + \mathcal{O}( \tau ^3+h^s),\quad\theta= \frac{\sigma}{\tau}.\label{for10}
\end{equation}

 Specially, here we replace  \(Q_N(t_{m+1})\) with \(\widetilde{Q}_N(t_{m+1})\), defined as
 \begin{align}
	\widetilde{Q}_N(t_{m+1})=e^{\tau \Lh} Q_N(t_m) + \int_{0}^{\tau} e^{(\tau - \sigma) \Lh}   \Nh(Q_N(t_m)) d\sigma.\no
 \end{align}
 Then for \eqref{f6}, we can express \(\Nh(Q_N(t))\) as
\begin{align}
	\Nh(Q_N(t)) =\Nh(Q_N(t_m+\sigma))= (1 - \frac{\sigma}{\tau}) \Nh(Q_N(t_m)) + \frac{\sigma}{\tau} \Nh(\widetilde{Q}_N(t_{m+1})) + \widetilde{\mathbf{R}}_{h\tau}^{(2)}(\sigma),
\end{align}
where $
	\widetilde{\mathbf{R}}_{h\tau}^{(2)}(\sigma)  =\Nh({Q}_N(t_m+\sigma)) -(1 - \frac{\sigma}{\tau}) \Nh({Q}_N(t_m)) - \frac{\sigma}{\tau} \Nh(\widetilde{Q}_N(t_{m+1})), \sigma \in [0,\tau].$

Then we will prove that the new truncation error term \(\widetilde{\mathbf{R}}_{h\tau}^{(2)}(\sigma)\) is still of order \(\mathcal{O}(\tau^2)\).
Letting  $\widetilde{Q}_N(t_{m+1}) $ minus $Q_N(t_{m+1})$, we can get
\begin{align}
	\widetilde{Q}_N(t_{m+1}) - Q_N(t_{m+1}) = & \int_{0}^{\tau} e^{(\tau - \sigma) \Lh} \left[ \Nh(Q_N(t_m)) - \Nh(Q_N(t_m + \sigma)) \right] d\sigma  \no\\
	= & \int_{0}^{\tau} e^{(\tau - \sigma) \Lh} \left[ -\sigma \Nh'(Q_N(t_m))Q_{t}(t_m) +\mathcal{O}(\tau^2) \right] d\sigma  \no\\
	= & -\int_{0}^{\tau} e^{(\tau - \sigma) \Lh} \sigma \Nh'(Q_N(t_m))Q_{t}(t_m) d\sigma  +\mathcal{O}(\tau^3) \no\\
	= & -\tau^2 \varphi_2(\tau \Lh) \Nh'(Q_N(t_m))Q_{t}(t_m) +\mathcal{O}(\tau^3).\label{for16}
\end{align}
Subtracting the Taylor expansion for \(Q_N(t_{m+1})\) around \(Q_N(t_m)\) into \eqref{for16}, we obtain
\begin{align}
	\widetilde{Q}_N(t_{m+1}) = & Q_N(t_m) + \tau Q_{t}(t_m) + \frac{\tau^2}{2} Q_{tt}(t_m) -\tau^2 \varphi_2(\tau \Lh) \Nh'(Q_N(t_m))Q_{t}(t_m) +\mathcal{O}(\tau^3).\label{for31}
\end{align}
Subtracting the Taylor expansion for \(\Nh(Q_N(t_m+\sigma))\) around \(\Nh(Q_N(t_m))\) and \eqref{for31} into \(\widetilde{\mathbf{R}}_{h\tau}^{(2)}(\sigma)\), we can get
\begin{align}
	\widetilde{\mathbf{R}}_{h\tau}^{(2)}(\sigma) =& \Nh({Q}_N(t_m+\sigma)) -\Nh({Q}_N(t_m)) - \frac{\sigma}{\tau} \left[ \Nh(\widetilde{Q}_N(t_{m+1})) - \Nh({Q}_N(t_m)) \right] \no\\
	=& \Nh'({Q}_N(t_m)) \left[ {Q}_N(t_m+\sigma) - {Q}_N(t_m) - \frac{\sigma}{\tau} \left( \widetilde{Q}_N(t_{m+1}) - {Q}_N(t_m) \right) \right]  +\mathcal{O}(\sigma^3) \no\\
	=& \Nh'({Q}_N(t_m)) \left[ \sigma Q_{t}(t_m) + \frac{\sigma^2}{2} Q_{tt}(t_m) - \frac{\sigma}{\tau} \left( \tau Q_{t}(t_m) + \frac{\tau^2}{2} Q_{tt}(t_m) \right. \right.\no\\ &\left. \left.-\tau^2 \varphi_2(\tau \Lh) \Nh'(Q_N(t_m))Q_{t}(t_m) \right) \right]
	  +\mathcal{O}(\sigma^3) \no\\
	=& \Nh'({Q}_N(t_m)) \left[  \left( \frac{\sigma^2}{2} - \frac{\sigma \tau}{2} \right) Q_{tt}(t_m) + \sigma \tau \varphi_2(\tau \Lh) \Nh'(Q_N(t_m))Q_{t}(t_m) \right]  +\mathcal{O}(\sigma^3) \no\\
	:= & \widetilde{G}_3(t_m) \tau^2 + \mathcal{O}(\tau^3)\label{for32}
\end{align}

Then substituting  \eqref{for32} into \eqref{for19} and substituting \eqref{for19} into the evolution equations \eqref{f6}, we can get the two steps evolution equations for \(Q_N(t)\) defined on the interval \([t_m, t_{m+1}]\):
\begin{align}
	\begin{cases}
			\frac{d \widetilde{Q}_N}{dt} = \Lh Q_N +  \Nh(Q_N(t_m)),\no\\
	\frac{d Q_N}{dt} = \Lh Q_N + (1 - \theta) \Nh(Q_N(t_m)) + \theta \Nh(\widetilde{Q}_N(t_{m+1})) + \tau^2 \widetilde{G}_3(t_m) + \mathcal{O}( \tau ^3+h^s),\no
	\end{cases}
\end{align}
 Thus, we define \(Q_{\tau,3}(t)\) as the solution to the following linear ODE system defined on the interval \([t_m, t_{m+1}]\):
\begin{equation*}
	\begin{cases}
				\frac{d \widetilde{Q}_{\tau,3}(t)}{dt} = \Lh Q_{\tau,3}(t) + \Nh'(Q_N(t_m))Q_{\tau,3}(t_m),  \\
		\frac{d Q_{\tau,3}(t)}{dt} = \Lh Q_{\tau,3}(t) + (1 - \theta)\Nh'(Q_N(t_m))Q_{\tau,3}(t_m) + \theta\Nh'(\widetilde{Q}_N(t_{m+1}))\widetilde{Q}_{\tau,3}(t_{m+1})- \widetilde{G}_3(t_{m}), \\
		Q_{\tau,3}(0) = \mathbf{0}.
	\end{cases}
\end{equation*}
This system is linear with constant coefficients over the interval \([t_m, t_{m+1}]\). The existence and uniqueness of its solution are guaranteed under standard assumptions. The solution \(Q_{\tau,3}(t)\) depends only on the state \(Q_N(t_m)\) and the previously constructed correction terms.
\begin{remark}
    Here we present why we didn't use  \(Q_N(t_{m+1})\) in \eqref{for10}  for the auxiliary function \(Q_{\tau,3}(t)\) that is more directer.
    We now  provide the ODE system  of  \(Q_{\tau,3}\) defined on the interval \([t_m, t_{m+1}]\) with \(Q_N(t_{m+1})\):
\begin{equation*}
	\begin{cases}
		\frac{d Q_{\tau,3}(t)}{dt} = \Lh Q_{\tau,3}(t) + (1 - \theta)\Nh'(Q_N(t_m))Q_{\tau,3}(t_m) + \theta\Nh'(Q_N(t_m))Q_{\tau,3}(t_{m+1})- G_3(t_{m}),  \\
		Q_{\tau,3}(0) = \mathbf{0}.
	\end{cases}
\end{equation*}
Notice that the above system is a implicit system since \(Q_{\tau,3}(t_{m+1})\) appears on the right-hand side and we can't make sure the existence and uniqueness of its solution. So inspired by the ETDRK2 scheme, we replace \(Q_N(t_{m+1})\) with \(\widetilde{Q}_N(t_{m+1})\) in \eqref{for10} to define the auxiliary function \(Q_{\tau,3}(t)\).
\end{remark}

Then  we define the constructed solution \(\breve{Q}(t) = Q(t) +  \tau ^2 Q_{\tau,3}(t)\)  and its Fourier projection \(\breve{Q}_N(t) = \PN \breve{Q}(t)\).
And we can get the following evolution equation for \(\breve{Q}_N(t)\):
\begin{align}
	\frac{d \breve{Q}_N}{dt} = \Lh \breve{Q}_N + (1 - \theta) \Nh(\breve{Q}_N(t_m)) + \theta \Nh(\widetilde{\breve{Q}}_N(t_{m+1})) + \mathcal{O}( \tau ^3+h^s),\quad t \in [t_m, t_{m+1}].\label{for33}
\end{align}
Integrating the equation \eqref{for33} from \(t_m\) to \(t_m + \tau\), we obtain:
\begin{align}
	\breve{Q}_N(t_{m+1}) =& e^{\tau \Lh} \breve{Q}_N(t_m) + \int_{0}^{\tau} e^{(\tau - \sigma) \Lh} \left[ (1 - \frac{\sigma}{\tau}) \Nh(\breve{Q}_N(t_m)) + \frac{\sigma}{\tau} \Nh(\widetilde{\breve{Q}}_N(t_{m+1})) +\breve{\mathbf{R}}_{h\tau}^{(2)}(\sigma) \right] d\sigma, \label{for17}
\end{align}
where \(\breve{\mathbf{R}}_{h\tau}^{(2)}(\sigma)\) is the truncation error term satisfying
\begin{equation}
	\|\breve{\mathbf{R}}_{h\tau}^{(2)}(\sigma)\|_{2} \le C_{R_2}( \tau ^3 + h^s). \label{for19}
\end{equation}

\textbf{Step 2:} We now present the error analysis for the constructed solution \(\breve{Q}_N(t)\) and the numerical solution \(Q_h^m\) generated by the ETDRK2 scheme \eqref{etd2h}.
The modified discrete error  is defined as \(\breve{e}^m :=  Q_h^m-\breve{Q}_N(t_m) \) and  the  discrete error   \({e}^m =  Q_h^m-{Q}_N(t_m) \).
Then we have the relation \({e}^m = \breve{e}^m - \tau^2 Q_{\tau,3}(t_m)\).

Similar to the ETD1 case, we proceed with a bootstrap argument to establish the uniform \(L^\infty\) bound of the error \({e}^m\) and consequently the numerical solution \(Q_h^m\) which satisfy
\begin{align}
	\max_{0 \le m \le n} \|\breve{e}^m\|_{\infty} \le 1,
	\max_{0 \le m \le n} \|Q_h^m\|_{\infty} \le M_0+1.\label{l5_36}
\end{align}

\begin{theorem}\label{thm:h1_error_etd2}
	Under the bootstrap assumption, the  discrete error of the ETDRK2 scheme \eqref{etd2h} satisfies
\begin{equation}
    \max_{0 \le m \le n} \|\breve{e}^m\|_{2} \le C_{err,2}( \tau ^2 + h^s),
\end{equation}
for \(m \tau  \le T\), provided \(\breve{e}^0=0\).
\end{theorem}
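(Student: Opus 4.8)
The plan is to transcribe the ETD1 argument of Theorem~\ref{thm:h1_error} to the two-stage setting, the only genuinely new ingredient being the coupling between the inner ETD1 stage and the outer stage. First I would put the scheme \eqref{etd2h} into Duhamel form,
\[
  Q_h^{m+1} = e^{\tau\Lh}Q_h^m + \int_0^\tau e^{(\tau-\sigma)\Lh}\!\left[\left(1-\tfrac{\sigma}{\tau}\right)\Nh(Q_h^m) + \tfrac{\sigma}{\tau}\Nh(\widetilde{Q}_h^{m+1})\right]\dd\sigma,
\]
so that it lines up term by term with the constructed-solution identity \eqref{for17}. Subtracting the two yields the outer error equation for $\breve{e}^{m+1}=Q_h^{m+1}-\breve{Q}_N(t_{m+1})$, whose forcing consists of the two nonlinear increments $\Nh(Q_h^m)-\Nh(\breve{Q}_N(t_m))$ and $\Nh(\widetilde{Q}_h^{m+1})-\Nh(\widetilde{\breve{Q}}_N(t_{m+1}))$, together with the residual $\breve{\mathbf{R}}_{h\tau}^{(2)}(\sigma)$ controlled by \eqref{for19}.

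Before estimating the outer step I would control the inner error $\widetilde{\breve{e}}^{m+1}:=\widetilde{Q}_h^{m+1}-\widetilde{\breve{Q}}_N(t_{m+1})$. Since both $\widetilde{Q}_h^{m+1}$ and $\widetilde{\breve{Q}}_N(t_{m+1})$ are exact ETD1 half-steps (of $Q_h^m$ and $\breve{Q}_N(t_m)$ respectively), their difference carries \emph{no} residual,
\[
  \widetilde{\breve{e}}^{m+1} = e^{\tau\Lh}\breve{e}^m + \tau\varphi_1(\tau\Lh)\bigl[\Nh(Q_h^m)-\Nh(\breve{Q}_N(t_m))\bigr].
\]
Using the discrete contraction estimate $\|e^{t\Lh}v\|_2\le e^{-\kappa_1 t}\|v\|_2$ (Lemma~\ref{5_1} together with $\Lh=\mathcal{L}_{h,\kappa_2}-\kappa_1\Id$), the bound $\|\tau\varphi_1(\tau\Lh)\|_2\le\kappa_1^{-1}(1-e^{-\kappa_1\tau})\le\tau$, and the $\Nh$-Lipschitz estimate for the increment $\Nh(Q_h^m)-\Nh(\breve{Q}_N(t_m))$ (which needs only the $L^\infty$ control of $Q_h^m$ from the bootstrap hypothesis \eqref{l5_36} and the smoothness of $\breve{Q}_N$), I obtain $\|\widetilde{\breve{e}}^{m+1}\|_2\le(1+C\tau)\|\breve{e}^m\|_2$. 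The $L^\infty$ bound on $\widetilde{Q}_h^{m+1}$ needed later for the outer Lipschitz estimate then follows by combining this inner bound with the inverse inequality \eqref{for37} and the uniform boundedness of the smooth intermediate $\widetilde{\breve{Q}}_N(t_{m+1})$, exactly as $Q_h^m$ was bounded in \eqref{for36}.

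I would then take the $L^2$ norm of the outer error equation, bound $e^{(\tau-\sigma)\Lh}$ by $e^{-\kappa_1(\tau-\sigma)}$, apply the Lipschitz estimates to both increments, and substitute the inner bound. Because the quadrature weights $1-\sigma/\tau$ and $\sigma/\tau$ are nonnegative and integrate against $e^{-\kappa_1(\tau-\sigma)}$ to factors $\le\tau$, and since the residual obeys \eqref{for19}, this produces the one-step recursion
\[
  \|\breve{e}^{m+1}\|_2 \le (1+K'\tau)\|\breve{e}^m\|_2 + C_{R_2}\tau(\tau^3+h^s).
\]
With $\breve{e}^0=0$, the discrete Gronwall inequality over $m\tau\le T$ cancels the $\tau$ prefactor and gives $\|\breve{e}^m\|_2\le C_{err,2}(\tau^3+h^s)$, which in particular yields the stated bound $C_{err,2}(\tau^2+h^s)$.

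The main obstacle is the coupling introduced by the inner stage: the outer estimate cannot close until the inner error $\widetilde{\breve{e}}^{m+1}$ is itself bounded by $\breve{e}^m$ with no loss of order, and until $\widetilde{Q}_h^{m+1}$ is confined to the $L^\infty$-ball on which $\Nh$ is Lipschitz. The delicate points are therefore (i) observing that the inner half-step contributes no new truncation error, so that $\widetilde{\breve{e}}^{m+1}$ is driven solely by $\breve{e}^m$, and (ii) propagating the $L^\infty$ control to the intermediate state via the inverse inequality; once these are in place, the Gronwall closure is a verbatim repeat of the ETD1 case.
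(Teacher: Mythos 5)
Your proposal is correct and follows essentially the same route as the paper: subtract the scheme from the constructed-solution Duhamel identity \eqref{for17}, bound the inner-stage error in terms of $\breve{e}^m$ using the ETD1 structure and the Lipschitz estimate under the bootstrap $L^\infty$ bound, invoke the contraction property from Lemma~\ref{5_1}, and close with discrete Gronwall. The only (cosmetic) difference is that you compare the inner stage against $\widetilde{\breve{Q}}_N(t_{m+1})$ so that no residual appears there, whereas the paper compares against $\breve{Q}_N(t_{m+1})$ and then discards the resulting residual; your bookkeeping is in fact the cleaner of the two.
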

\begin{proof}
Subtracting \eqref{etd2h} from \eqref{for17}, we obtain the error equation for \(\breve{e}^m\):
\begin{align}
	\breve{e}^{m+1} =& e^{ \tau \Lh} \breve{e}^m +  \int_{0}^{\tau} e^{(\tau - \sigma) \Lh} (1-\frac{\sigma}{\dt})(\Nh(Q_h^m)-\Nh(\breve{Q}_N(t_m)))\no\\\quad+&\frac{\sigma}{\dt}(\Nh(\widetilde{Q}_h^{m+1})-\Nh(\breve{Q}_N(t_{m+1})))+\widetilde{\mathbf{R}}_{h\tau}^{(2)}(\sigma)d\sigma. \label{fiv22}
\end{align}
The smoothness of the exact solution \(Q\) and the boundedness of \(\mathcal N_h\) and its derivatives ensure that \(\sup_{t\in[0,T]}\|F_N''(t)\|_2\) is bounded by a constant \(C_{R_2}\) independent of \(N\) and \(\tau\).
Thus, we have established the bound for the truncation error:
\begin{equation}
	\|\widetilde{\mathbf{R}}_{h\tau}^{(2)}(\sigma)\|_{2} \le C_{R_2}( \tau ^2 + h^s). \label{for5}
\end{equation}

Then we will estimate the term \(\Nh(\widetilde{Q}_h^{m+1})-\Nh(\widetilde{\breve{Q}}_N(t_{m+1}))\). Note that \(\widetilde{Q}_h^{m+1}\) is obtained by the ETD1 scheme.
Let \(\widetilde{e}^{m+1} = \widetilde{Q}_h^{m+1}-\breve{Q}_N(t_{m+1})\). Then we have
\begin{align}
	   \widetilde{e}^{m+1} &= e^{ \tau \Lh} \breve{e}^m +  \tau  \varphi_1( \tau \Lh) \left[ \Nh(\breve{Q}_N(t_m)) - \Nh(Q_h^m)+\widetilde{\mathbf{R}}_{h\tau}^{(1)}(\sigma) \right]\no\\
	   &\le e^{-{\kappa_1} \tau} \|\breve{e}^m\|_{2} +  \kappa_1^{-1}(1 - e^{-{\kappa_1} \tau}) K \|\breve{e}^m\|_{2} )\no\\
	   &\le \|\breve{e}^m\|_{2} +  \tau K'\|\breve{e}^m\|_{2}.\label{for13}
\end{align}
Then we can get
\begin{align}
	   \Nh(\widetilde{Q}_h^{m+1})-\Nh(\breve{Q}_N(t_{m+1}))&\leq K(\|\breve{e}^m\|_{2} +  \tau K'\|\breve{e}^m\|_{2}).\no\\ &\leq K\|\breve{e}^m\|_{2} +  \tau KK'\|\breve{e}^m\|_{2}. \label{for14}
\end{align}
So we have
\begin{align}
	&(1-\frac{\sigma}{\dt})(\Nh(Q_h^m)-\Nh(\breve{Q}_N(t_m)))+\frac{\sigma}{\dt}(\Nh(\widetilde{Q}_h^{m+1})-\Nh(\breve{Q}_N(t_{m+1})))\no\\
	&\leq (1-\frac{\sigma}{\dt})K \|\breve{e}^m\|_{2} + \frac{\sigma}{\dt}(K\|\breve{e}^m\|_{2} +  \tau KK'\|\breve{e}^m\|_{2} )\no\\
	&\leq K \|\breve{e}^m\|_{2} +  \sigma KK'\|\breve{e}^m\|_{2}.\label{for15}
\end{align}

Taking the \(L^2\) norm  on both sides of \eqref{fiv22}, subtracting \eqref{for13}-\eqref{for15} into it, and using Lemma \ref{5_1}, we can derive the following estimate:
\begin{align}
	\|\breve{e}^{m+1}\|_{2} &\le \|e^{ \tau \Lh} \breve{e}^m\|_{2} +  \int_{0}^{\tau} \|e^{(\tau - \sigma) \Lh} (1-\frac{\sigma}{\dt})(\Nh(Q_h^m)-\Nh(\breve{Q}_N(t_m)))\no\\
	&\quad+\frac{\sigma}{\dt}(\Nh(\widetilde{Q}_h^{m+1})-\Nh(\breve{Q}_N(t_{m+1})))\|_{2}d\sigma + \int_{0}^{\tau}e^{ \tau \Lh}\|\breve{\mathbf{R}}_{h\tau}^{(2)}(\sigma)\|_{2}d\sigma\no\\
	&\le e^{-{\kappa_1} \tau} \|\breve{e}^m\|_{2} +  \int_{0}^{\tau} e^{-{\kappa_1} (\tau - \sigma)} (K \|\breve{e}^m\|_{2}+ \sigma K K'\|\breve{e}^m\|_{2} )d\sigma + C_{R_2}\tau( \tau ^3 + h^s)\no\\
	&\le e^{-{\kappa_1} \tau} \|\breve{e}^m\|_{2} +  \kappa_1^{-1}(1 - e^{-{\kappa_1} \tau}) K \|\breve{e}^m\|_{2} + \kappa_1^{-2}(e^{-{\kappa_1} \tau}-1+{\kappa_1} \tau )K K'\|\breve{e}^m\|_{2}\no\\
	&\quad + C_{R_2}\tau( \tau ^3 + h^s)\no\\
		&\le  \|\breve{e}^m\|_{2} +  \kappa_1^{-1}(1 - e^{-{\kappa_1} \tau}) K' \|\breve{e}^m\|_{2} + \kappa_1^{-2}(e^{-{\kappa_1} \tau}-1+{\kappa_1} \tau )K K'\|\breve{e}^m\|_{2}\no\\
	&\quad + C_{R_2}\tau( \tau ^3 + h^s)\no\\
	&\le \|\breve{e}^m\|_{2} +  \tau K' \|\breve{e}^m\|_{2} +\frac{\tau ^3}{2} K'^2\|\breve{e}^m\|_{2} + C_{R_2}\tau( \tau ^3 + h^s).\label{forr16}
\end{align}
Using $e^0=0$ and a standard discrete Gronwall's inequality, for \eqref{forr16}, we have
\begin{align*}
	\norm{\e{m}}
	&\le \sum_{k=0}^{m-1} \left(1 + \tau K' + \frac{\tau^2}{2}K'^2\right)^k  C_{R_2}\tau( \tau ^3 + h^s) \\
	&\le \sum_{k=0}^{m-1} \left(1 + \tau \left(K' + \frac{\tau}{2}K'^2\right)\right)^k C_{R_2}\tau( \tau ^3 + h^s)  \\
	&= \frac{\left(1 + \tau(K' + \frac{\tau}{2}K'^2)\right)^m - 1}{\tau(K' + \frac{\tau}{2}K'^2)}  C_{R_2}\tau( \tau ^3 + h^s)  \\
	&\le \frac{e^{T (K' + \frac{\tau}{2}K'^2)} - 1}{K' + \frac{\tau}{2}K'^2}  C_{R_2}( \tau ^3 + h^s) \\
	&= C(T, K', \kappa_1,  C_{R_2}) (\tau^2 + h^s):= C_{err,2}( \tau ^3 + h^s),
\end{align*}
for \(m \tau  \le T\). This completes the proof.
\end{proof}

\textbf{Step 3:}
We now establish the uniform \(L^\infty\) bound for the numerical solution \(Q_h^m\) generated by the ETDRK2 scheme, under a suitable CFL-type condition on the time step \( \tau \).
Similar to the ETD1 case, we use the inverse inequality to obtain the \(L^\infty\) bound for \(\breve{e}^{m+1}\):
\begin{align}
\|\breve{e}^{m+1}\|_{\infty} &\le C_{inv} h^{-\frac{3}{2}} \|\breve{e}^{m+1}\|_{2} \no\\
	&\le C_{inv} h^{-\frac{3}{2}}  C_{err,2}( \tau ^3 + h^s).
\end{align}
To close the bootstrap argument,
splitting this into spatial and temporal parts, we require
\begin{align*}
	C_{inv} C_{err,2} h^{s-\frac{3}{2}}  &\le \frac{1}{2}, \\
	C_{inv} C_{err,2} h^{-\frac{3}{2}}  \tau ^3 &\le \frac{1}{2}.
\end{align*}
Thus, we have two constraints to satisfy.
The spatial part requires \(h^{s-\frac{3}{2}}  \le \frac{1}{2 C_{inv} C_{err,2}}\), which is satisfied if \(s \ge \frac{3}{2}\).
The temporal part imposes the critical constraint:
\begin{align}
	&2C_{inv} C_{err,2} h^{-\frac{3}{2}}  \tau ^3 \le 1 \no \\
	\implies & \tau \le (2C_{inv}C_{err,2})^{-\frac{1}{3-r}} h^{\frac{3}{2(3-r)}},\quad r \in [0,3). \label{for38}
\end{align}
Under this condition \eqref{for38}, we have
\(\|\breve{e}^{m+1}\|_{\infty} \le 1\), validating the bootstrap hypothesis for all \(m \tau  \le T\).
Consequently, the uniform \(L^\infty\) bound for the numerical solution follows:
\begin{align}
    \|Q_h^{m+1}\|_{\infty} \le \|\breve{Q}_N(t_{m+1})\|_{\infty} + \|\breve{e}^{m+1}\|_{\infty} \le M_Q + 1. \label{etd2_bound}
\end{align}
These results confirm the stability and boundedness of the numerical solution generated by the ETDRK2 scheme under the specified CFL-type condition on the time step.

\textbf{Step 4:}
We now present the detailed error analysis for Theorem \ref{etd_error2}.
    Let \(e_h^m = Q(t_m) - Q_h^m\) be the total error at time \(t_m\).
    For the \(L_2\) norm  of \(e_h^m\), using  \eqref{l5_24}, \eqref{l5_26} and Theorem \ref{thm:h1_error}, we have
\begin{align*}
	\|e_h^m\|_{2} \le& \|Q_N(t_m) - Q_h^m \|_2+\|\eta^m\|_{2}\\
	\le& \|Q_N(t_m) -\breve{Q}_N(t_m)\|_2+\|\breve{Q}_N(t_m)- Q_h^m\|_2 +C_p h^s\\
	\le& \tau^2 \|\PN Q_{\tau,3}(t_m)\|_2+ C_{err,2}( \tau ^3 + h^s)+C_p h^s\\
    \le& C (\tau^2 + h^{s}).
\end{align*}
For the \(L^\infty\) norm \(e_h^m\), when \(r \in [2,3)\), using  \eqref{f21} \eqref{eq:Linf_error_bound_final}  and \eqref{l5_25}, we can obtain
\begin{align*}
	\|e_h^m\|_{\infty} =& \|Q_N(t_m) - Q_h^m \|_\infty+\|\eta^m\|_\infty\\
	=& \|Q_N(t_m) -\breve{Q}_N(t_m)\|_\infty+\|\breve{Q}_N(t_m)- Q_h^m\|_\infty+C_p h^{s-\frac{3}{2}} \\
	=& \tau^2 \|\PN Q_{\tau,3}(t_m)\|_\infty+\tau^r+C_{inv}  C_{err,2} h^{s-\frac{3}{2}}+C_p h^{s-\frac{3}{2}}\\
	\le& C (\tau^2 +  h^{s-\frac{3}{2}}).
\end{align*}
Then we get the conclusion of Theorem \ref{etd_error2}.
 \section{Numerical experiments}\label{section6}~\\
 In this section, we present some numerical experiments to validate the MBP preservation, energy dissipition and convergence properties of the fully discrete ETD schemes  \eqref{etd1h} and \eqref{etd2h} applied to  the governing equation of BPs model \eqref{1.6} with periodic boundary condition under the Frobenius norm.
    Then we present some numerical simulations to illustrate the  behaviors of  the BP III phase and the effects of the chirality.
    Through the curl term is only valid in three dimensions (3D),  in all tests the computational domain is taken as $\Omega=(0,2 \pi)^{3}$.
 \subsection{Convergence, MBP, and energy dissipation tests}\label{dim2}~\\
This subsection tests the MBP, energy dissipation and convergence orders of the  ETD schemes.
  In our tests, we take $N=64$ Fourier modes in each direction to ensure that the spatial discretization error is negligible.
The initial condition is set to be
 \begin{align*}
    Q_0(x, y, z) = c \begin{pmatrix}
        \cos(x) - \cos(y) & \sin(x)\sin(y) & \sin(x)\sin(z) \\
        \sin(x)\sin(y) & \cos(y) - \cos(z) & \sin(y)\sin(z) \\
        \sin(x)\sin(z) & \sin(y)\sin(z) & \cos(z) - \cos(x)
    \end{pmatrix}.
\end{align*}
    Based on \cite{mottramintroduction}, the parameters are set as follows: $\alpha=-1.00$, $\beta=1$, $\gamma=2.25$, $L_1=1,L_4=\frac{1}{4}$ and $c=\frac{1}{3}$.
The stability constants $\kappa_1$ and $\kappa_2$ are respectively chosen as $\kappa_1=8,\kappa_2=0.5$.
    \vskip 0.2cm
  \textbf{Convergence tests.}
  This test verifies the convergence orders of the ETD schemes in both $\mathcal{Z}$-norm and  $2$-norm.
We set the final time to $T=1$ and calculate the numerical solution with the time step size $\tau=2^{-k}\tau_{1}$, where $k=0,1,\ldots,7$ and $\tau_{1}=2^{-4}$. For the lack of the exact solution, the numerical solution generated with $\tau=2^{-8}\tau_{1}$ at $T=1$ is regarded as the benchmark solution.
  Table \ref{v2_lri_error} presents the errors and  convergence rates between the numerical solutions and the benchmark solution in both $\mathcal{Z}$-norm and  $2$-norm.  The results indicate that the  ETD1 scheme is first-order accurate in time and the ETDRK2 scheme is second-order accurate, which are consistent with Theorems \ref{etd_error1} and \ref{etd_error2}.

\begin{table}[htbp]
 \centering
 \begin{tabular}{l *{4}{c} *{4}{c}}
  \toprule
  \multicolumn{1}{c}{} & \multicolumn{4}{c}{ETD1} & \multicolumn{4}{c}{ETDRK2} \\
  \cmidrule(lr){2-9}
  \multicolumn{1}{c}{$\tau_1=2^{-4}$}
     & \multicolumn{2}{c}{$\mathcal{Z}$-norm} & \multicolumn{2}{c}{2-norm}
     & \multicolumn{2}{c}{$\mathcal{Z}$-norm} & \multicolumn{2}{c}{2-norm} \\
\cmidrule(lr){2-9}
  & Error & Rate & Error & Rate & Error & Rate & Error & Rate \\
  \midrule
  $\tau_1$       & 2.2778E-02 & -     & {3.9409E+00} & -     & 4.8009E-03 & -     & 1.0346E+00 & - \\
  $\tau_1$/2     & 1.1424E-02 & 0.995 & {1.9858E+00} & 0.988 & 1.3353E-03 & 1.846 & 2.8815E-01 & 1.844 \\
  $\tau_1$/4     & 5.6653E-03 & 1.011 & 9.8682E-01 & 1.008 & 3.4829E-04 & 1.939 & 7.5180E-02 & 1.938 \\
  $\tau_1$/8     & 2.8153E-03 & 1.008 & 4.9086E-01 & 1.007 & 8.8697E-05 & 1.973 & 1.9147E-02 & 1.973 \\
  $\tau_1$/16    & 1.4027E-03 & 1.005 & 2.4469E-01 & 1.004 & 2.2367E-05 & 1.987 & 4.8285E-03 & 1.987 \\
  $\tau_1$/32    & 7.0011E-04 & 1.002 & 1.2215E-01 & 1.002 & 5.6154E-06 & 1.994 & 1.2122E-03 & 1.993 \\
  $\tau_1$/64    & 3.4973E-04 & 1.001 & 6.1026E-02 & 1.001 & 1.4068E-06 & 1.997 & 3.0369E-04 & 1.996 \\
  $\tau_1$/128   & 1.7478E-04 & 1.000 & 3.0501E-02 & 1.000 & 3.5206E-07 & 1.998 & 7.6001E-05 & 1.998 \\
  \bottomrule
 \end{tabular}
 \caption{Errors and convergence rates computed by the ETD schemes.}
 \label{v2_lri_error}
\end{table}
 \textbf{MBP preservation and Energy dissipation tests.}
In Theorem~\ref{mbp}, we have proved that the temporal semi-discrete ETD schemes
preserve the MBP unconditionally. However, a theoretical proof for the fully
discrete schemes is challenging due to the characteristics of spectral methods.
Therefore, we now turn to numerical experiments to verify that the MBP is preserved in practice.
In this test, we set the time step size $\tau=2^{-5}$ and the final time $T=10$.
  \begin{figure}[htbp]
		 \includegraphics[width=\textwidth]{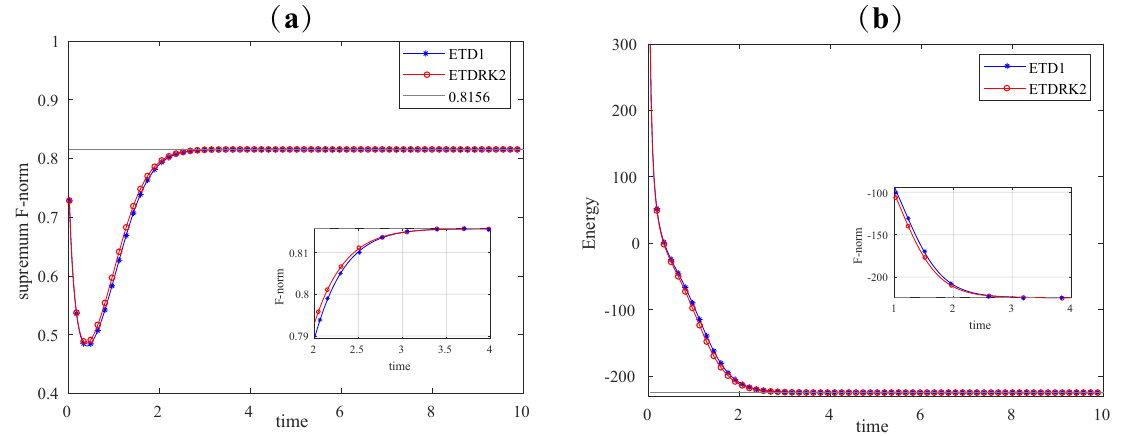}
	 \caption{Evolutions of the (a) supremum  norm $\|Q_h^m\|_\mathcal{Z}$, and (b) energies computed by the ETD1 scheme  with blue lines and ETDRK2 scheme with red lines. The dashed lines in (a) represent the bounds of the MBP, the dashed lines in (b) indicate the total energy conservation.}
		 \label{fig_mbp_norm}
 \end{figure}

 Figure~\ref{fig_mbp_norm} presents the evolution of the supremum norm $\|Q_h^m\|_\mathcal{Z}$ and the discrete free energy $E_h[Q_h^m]$. The norm in Fig. \ref{fig_mbp_norm} (a) exhibits an initial decrease, suggesting the system is evolving towards a more ordered state, followed by a subsequent increase and stabilization as the solution approaches a steady, equilibrium configuration. The overall boundedness is consistent with the preservation of the MBP (corresponds to Theorem \ref{mbp}) and demonstrates the stability of our numerical schemes.
Furthermore, as shown in Fig.~\ref{fig_mbp_norm}(b), the discrete free energy decreases monotonically, consistent with the energy dissipation laws established in Theorems~\ref{energy_stability} and \ref{energy_d}. This monotonic decay is the hallmark of a dissipative system, demonstrating that our simulation accurately captures the gradient flow dynamics inherent to the BPs model.
  \begin{figure}[htbp]
		 \includegraphics[width=\textwidth]{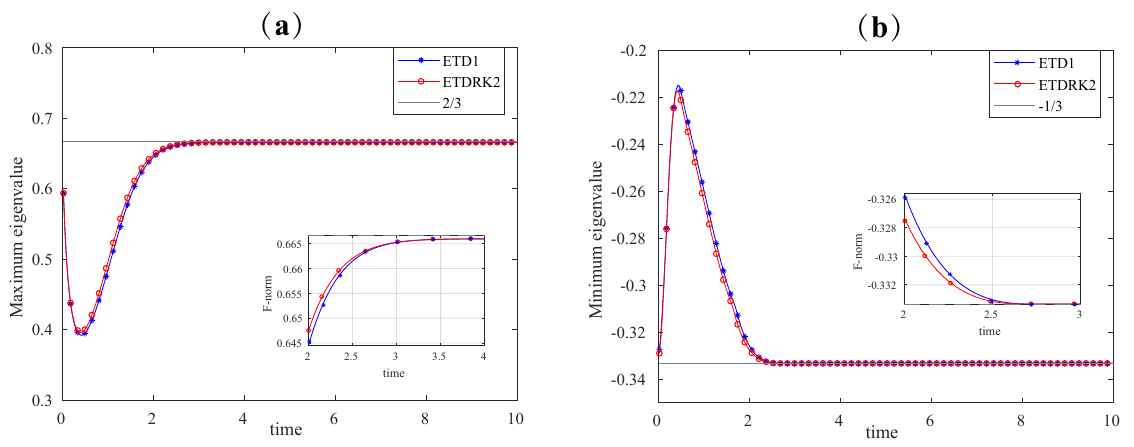}
	 \caption{Evolutions of the  (a) maximum eigenvalue of $Q_h^m$ and (b) maximum eigenvalue of $Q_h^m$  computed by the ETD1 scheme with blue lines and ETDRK2 scheme with red lines. The dashed lines in (a,b) represent the bounds of the eigenvalues, which lies in  $(-\frac{1}{3},\frac{2}{3})$.}
		 \label{fig_mbp_eign}
 \end{figure}

 Figure~\ref{fig_mbp_eign} presents the time evolution of the maximum and minimum eigenvalues of the numerical solution $Q_h^m$. The eigenvalues are shown to remain strictly within the physical bounds of $\left(-\frac{1}{3}, \frac{2}{3}\right)$ throughout the simulation, eventually converging to stable values. Adherence to these bounds is a critical indicator of a physically consistent Q-tensor evolution \cite{mottramintroduction}. These results therefore confirm that the proposed ETD schemes effectively capture the dynamics of the blue phase, yielding solutions that are both numerically stable and physically meaningful.
    \subsection{Chiral liquid crystal blue phase simulations}\label{dim3}~\\
This section presents a series of numerical simulations to illustrate the behaviors of BPs, including the blue phase III (BP III) and blue phase I (BP I).
In the free energy formulation \eqref{free_energy}, the chiral term parameter is represented by $L_4$, setting $L_4 = 2 L_1 q_0$.
 Here, $q_0$ is the chiral wave number, which is inversely proportional to the helical pitch $p_0$ and is given by $q_0 = 2\pi/p_0$~\cite{dupuis2005numerical,RevModPhys.61.385}.
To facilitate physical interpretation and parameter analysis, here we also use $q_0$ to characterize the intrinsic chirality and introduce the following dimensionless parameters as in \cite{dupuis2005numerical,grebel1984landau, RevModPhys.61.385}:
\begin{align}
\tau_c = \frac{24 \alpha \gamma}{\beta^2}, \quad
\kappa^2 = \frac{108 L_1 \gamma q_0^2}{\beta^2}.\label{dimless_params}
\end{align}
The parameter  $\tau_c$ represents the reduced temperature, indicating the proximity to the nematic-cholesteric transition and $\kappa$ quantifies the strength of chirality relative to the  elastic energy.
According to
\cite{grebel1983landau, RevModPhys.61.385,dupuis2005numerical},
the approximate stability ranges of $\kappa$ are
\begin{equation}
\text{BPI: } 0.6 \lesssim \kappa \lesssim 1.2, \qquad
\text{BPII: } 1.2 \lesssim \kappa \lesssim 1.8, \qquad
\text{BP III: } \kappa \gtrsim 1.8. \label{bp_ranges}
\end{equation}
The initial condition  is set to be a ordered state referred to \cite{dupuis2005numerical,grebel1984landau, RevModPhys.61.385}:
            \begin{align*}
 Q_0(x, y, z) = c \begin{pmatrix}
\cos(x) + \cos(y) -2\cos(z) & \sin(x)\sin(y) & \sin(x)\sin(z) \\
\sin(x)\sin(y) & \cos(y) + \cos(z) -2\cos(x) & \sin(y)\sin(z) \\
\sin(x)\sin(z) & \sin(y)\sin(z) & \cos(z) + \cos(x) -2\cos(y)
\end{pmatrix},
  \end{align*}
    where $c=0.2$.
The simulations are performed using the ETDRK2 scheme  with a time step size of $\tau = 2^{-5}$ and a spatial resolution of $64^3$ grid points. The elastic constant is set to $L_1=0.1$ to ensure the elastic constant is sufficiently small, promoting the formation of disclination networks characteristic of BPs.

 \textbf{Blue Phase III simulation.} In this simulation, we set $\kappa=3$ and $\tau_c=1$, which fall within the stability range for BP III as indicated in \eqref{bp_ranges}.
Then applying  normalization to $\beta,\gamma$ and using the relations \eqref{dimless_params}, we can determine the dimensional parameters as follows:
\begin{equation}
\alpha = 0.042, \quad \beta = 1, \quad \gamma = 1, \quad L_1 = 0.1, \quad q_0=0.9129, \quad L_4 = 0.1826.
\end{equation}
To analyze the dynamic evolution of the BP III phase, we compute and visualize several key quantities at different time points: the scalar order parameter $s$ of the Q-tensor field $Q_h^m$, the biaxiality parameter $\beta_b$, and the static structure factor $S(\boldsymbol{k})$.
Here, we give the calculation formula of $s$, $\beta_b$ \cite{mottramintroduction} and $S(\boldsymbol{k})$ \cite{henrich2011structure}:
\begin{equation}
    s=\frac{3}{2}\lambda_{\max}(Q_h^m), \quad \beta_b^2 = 1 - 6 \frac{(\text{tr} Q^3)^2}{(\text{tr} Q^2)^3},\quad S(\boldsymbol{k}) := \left \langle \widehat{Q}_{\boldsymbol{k}},\widehat{Q}_{\boldsymbol{k}} \right \rangle_F, \quad \boldsymbol{k} \in \hat{S}_h,
\end{equation}
where $\lambda_{\max}$ denotes the maximum eigenvalue, $\beta_b$ quantifies the degree of biaxiality and $S(\boldsymbol{k})$ measures the intensity of each Fourier mode.
    \begin{figure}[htbp]
            \includegraphics[width=\textwidth]{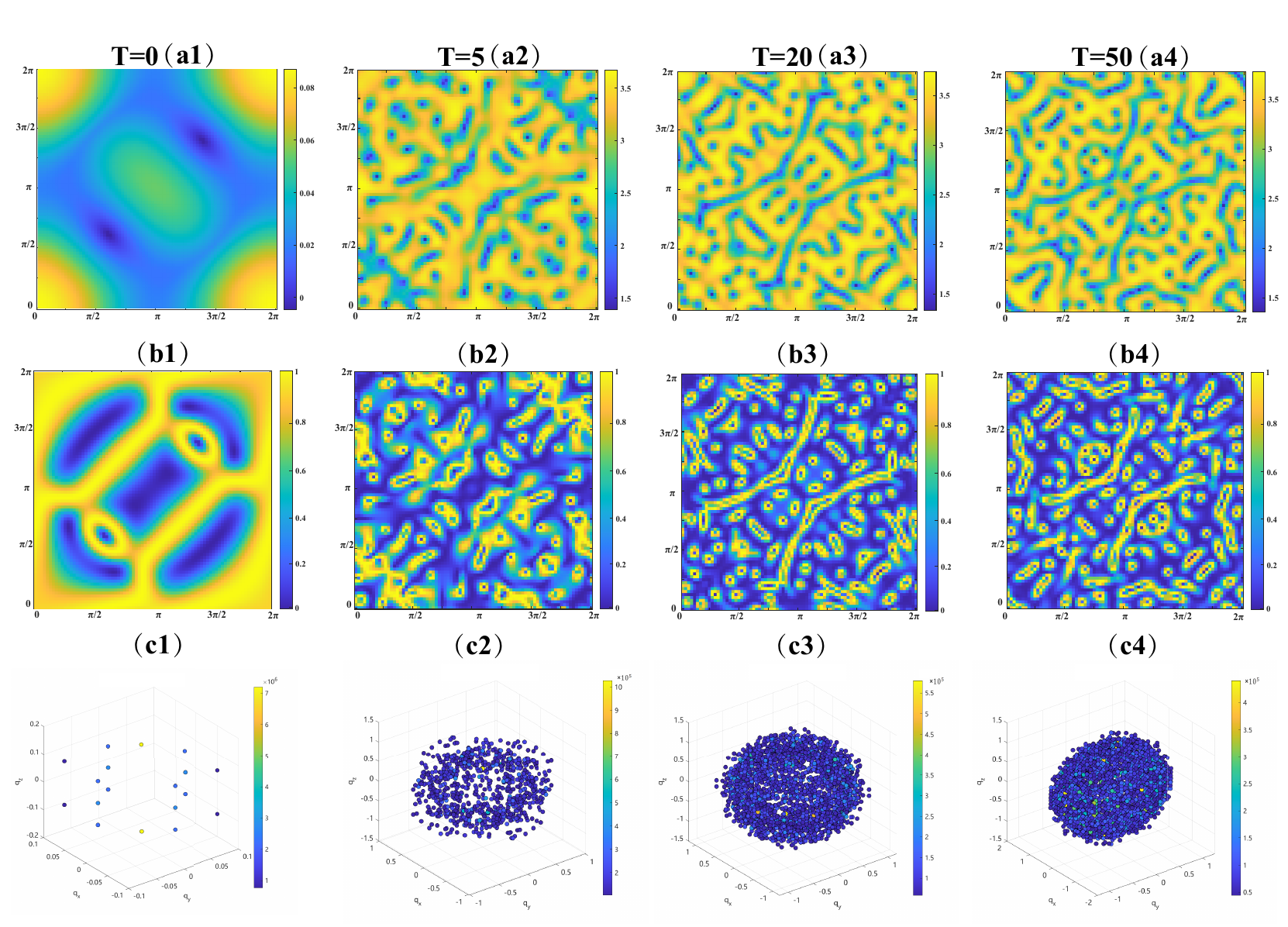}
        \caption{  The evolution of the scalar order parameter $s$ (a1-a4), the biaxiality $\beta_b$ (b1-b4), and the static structure factor $S(\boldsymbol{k})$ (c1-c4) is depicted at simulation times $T=0, 5, 20,$ and $50$. }
            \label{fig_nematic_blue3}
    \end{figure}

  Figure~\ref{fig_nematic_blue3} present the cross-sections of $s$ and  $\beta_b$ and  display the evolution of $S(\boldsymbol{k})$ depicting the dynamic evolution of the BPs equation.
  In  Fig.~\ref{fig_nematic_blue3} (a1-a4), the dynamics begin from a relatively ordered state  to a highly disordered configuration with intricate patterns and defects, maintaining the complex topology indicative of the blue phase.
  The  $\beta_b$ in (b1) starts with low values, indicating a predominantly uniaxial phase, reflecting the growing complexity of the molecular alignment.  At the final state (b4), the $\beta_b$ remains high and spatially varied, confirming the persistence of complex molecular orientations.
  The  $S(\boldsymbol{k})$ in (c1) initially displays a diffuse pattern, indicative of the lack of long-range order in the initial state. As time progresses to T=5 (c2), the scattering intensity begins to concentrate, signaling the development of short-range correlations. By T=20 (c3), a pronounced spherical shell structure emerges in reciprocal space, characteristic of BP III, indicating the presence of a characteristic length scale despite the absence of long-range translational order. Finally, at T=50 (c4), the $S(\boldsymbol{k})$ solidifies into a well-defined spherical shell, confirming the establishment of the BP III phase with its hallmark isotropic scattering pattern.
    \begin{figure}[htbp]
        \centering
            \includegraphics[width=0.8\textwidth]{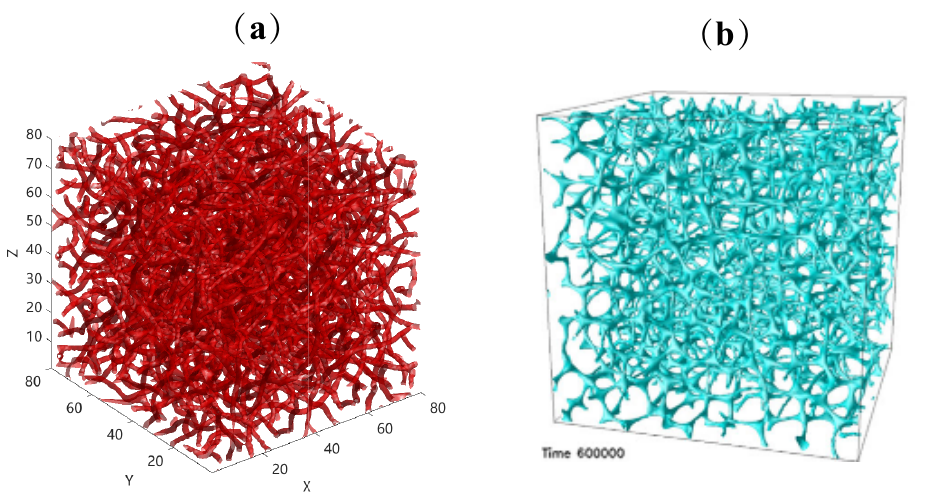}
        \caption{ (a) displays the isosurface plot of the  order parameter $Q_h^n$ with $s=0.3*s_{max}$ at $T=50$ where $s_{max}$ is the maximum value of $s$.  (b) displays disclination line network of BP III of the isosurface $s=0.12$ at $\tau_c=-0.25, \kappa=2.5$ from  \cite{henrich2011structure}.}
            \label{fig_nem-blue4}
    \end{figure}

    Figure~\ref{fig_nem-blue4}  present the isosurface with $s=0.3*s_{max}$ and the isosurface of the End-state disclination line network from \cite{henrich2011structure}.
In Fig.~\ref{fig_nem-blue4} (a), the isosurface of  $s$  reveals a highly interconnected, amorphous, sponge-like network. This morphology is topologically identical to the canonical disclination line network of BP III, shown in Fig.~\ref{fig_nem-blue4} (b). This direct structural correspondence strongly supports the conclusion that our simulation has successfully captured the complex topology of BP III.

\begin{figure}[htbp]
        \centering
            \includegraphics[width=0.6\textwidth]{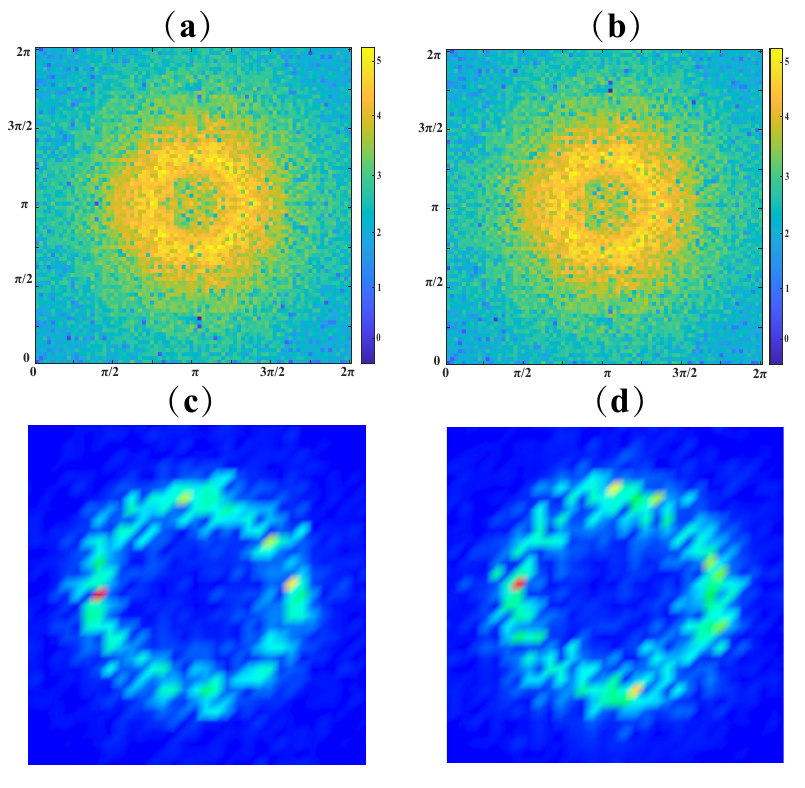}
        \caption{  (a-b) show the  static structure factor $S(\mathbf{k})$ plotted on a logarithmic scale onto the $y=0$ plane  and $x=0$ plane at $T=50$. (c-d) display the
corresponding structure factor of BP III on cuts along $y=0$ and $x=0$  from \cite{henrich2011structure}.}
            \label{fig_nem_structure_factor}
    \end{figure}
    Figure~\ref{fig_nem_structure_factor} present the $S(\mathbf{k})$  plotted on a logarithmic scale  and the reference diffraction patterns from \cite{henrich2011structure}.
 The 2D projections of $S(\mathbf{k})$ onto the $xy$-plane (Figs.~\ref{fig_nem_structure_factor}(a) and (b)) exhibit a prominent and diffuse scattering ring. This ring is the definitive hallmark of a structure with short-range order at a characteristic length scale but lacking long-range crystalline periodicity, a key feature of BP III. This scattering pattern is in excellent agreement with the reference diffraction patterns shown in Figs.~\ref{fig_nem_structure_factor}(c) and (d).
The strong correspondence in both the real-space network topology and the reciprocal-space scattering patterns confirms that our simulation has successfully captured the complex, amorphous structure of BP III.

\begin{figure}[htbp]
        \centering
            \includegraphics[width=0.8\textwidth]{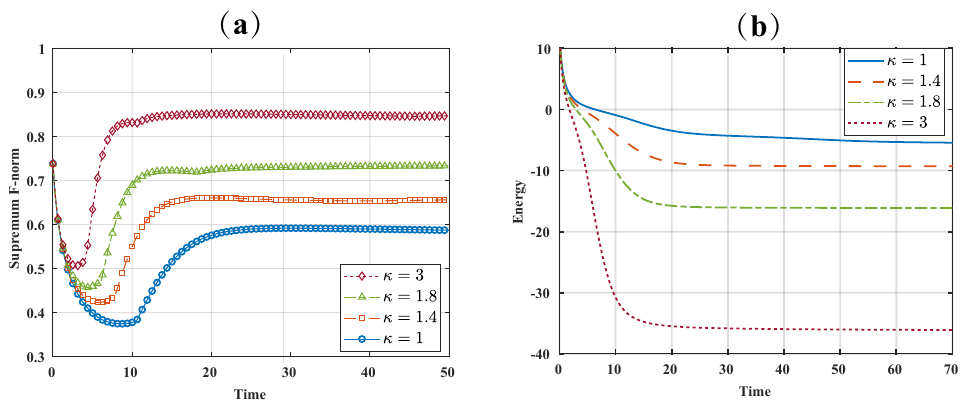}
        \caption{  (a) show the  evolution of the supremum norm $\|Q_h^m\|_\mathcal{Z}$, and (b) show the evolution of the discrete free energy $E_h[Q_h^m]$, respectively at $\kappa=1,1.4,1.8,3$.}
            \label{fig_nem_energy}
    \end{figure}
    Figure~\ref{fig_nem_energy} present the evolution of the supremum norm $\|Q_h^m\|_\mathcal{Z}$ and the discrete free energy $E_h[Q_h^m]$ at different chirality strengths $\kappa=1,1.4,1.8,3$.
From Fig.~\ref{fig_nem_energy} (a), we can observe that for all chirality strengths, the supremum norm $\|Q_h^m\|_\mathcal{Z}$ remains bounded and increases proportionally with $\kappa$, which is consistent with the MBP preservation property established in Theorem~\ref{mbp}.
Furthermore, Fig.~\ref{fig_nem_energy} (b) shows that the discrete free energy $E_h[Q_h^m]$ decreases monotonically over time for all values of $\kappa$.
Notably, higher chirality strengths lead to a more rapid decrease in free energy, indicating that increased chirality accelerates the system's relaxation towards equilibrium.

\textbf{Blue Phase I simulation.}  Next, we turn our attention to BP I. Based on the established stability region for BP I in \eqref{bp_ranges}, we set the chirality parameter to $\kappa = 1.0$. Then the other dimensional parameters are derived as follows:
\begin{equation}
\alpha = 0.042, \quad \beta = 1, \quad \gamma = 1, \quad L_1 = 0.1, \quad q_0=0.3043, \quad L_4 = 0.0609.
\end{equation}
\begin{figure}[htbp]
    \centering
            \includegraphics[width=1\textwidth]{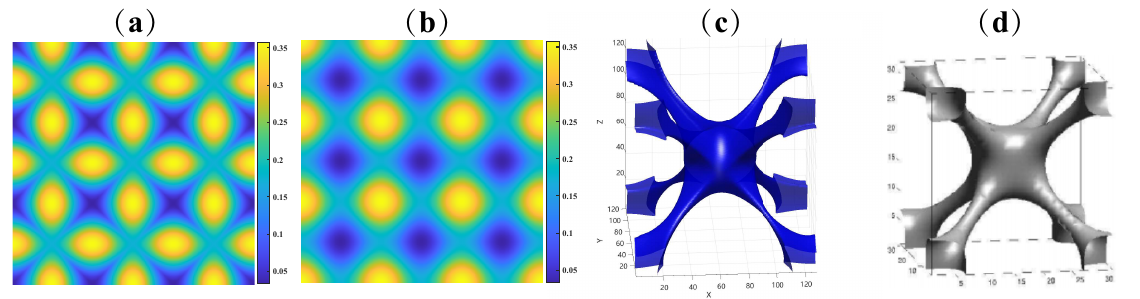}
        \caption{(a) and (b)  show the cross-sections of the scalar order parameter of layer 1 and 2. (c) displays the disclination defects and (d) shows the disclination defects of $O^5$ symmetry group listed in \cite{dupuis2005numerical}.}\label{fig_nematic_blue1_new}
\end{figure}

Figure~\ref{fig_nematic_blue1_new} presents cross-sectional views and isosurface visualizations of the disclination line structure.
    In (a) and (b), we display the cross-sections of the scalar order parameter at two different layers, revealing the arrangement of adjacent two layers, which is a manifestation of body-centered cubic (BCC) arrangement.  The isosurfaces in (c) illustrate the disclination defect network, showcasing  $O^5$ BCC symmetry, as highlighted in (d), which corresponds to the well-known symmetry group associated with BP I \cite{dupuis2005numerical}. This structural configuration is consistent with experimental observations and theoretical predictions for BP I, confirming the accuracy of our simulation in capturing the complex topology of this liquid crystal phase.
 \section{Conclusion}\label{section7}~\\
In this paper, we have developed two efficient ETD schemes for the  BPs equation. We have rigorously proved that both the ETD1 scheme and the ETDRK2 scheme preserve the maximum bound principle (MBP) unconditionally. We have also established optimal error estimates for both schemes in the discrete $L^2$ norm. Furthermore, we have demonstrated that both schemes are unconditionally energy stable with respect to a discrete energy functional. Numerical experiments have been conducted to validate our theoretical findings, including tests for MBP preservation, energy dissipition, and convergence rates. Additionally, we have presented simulations illustrating the nematic-blue phase transition process of liquid crystals using the ETDRK2 scheme. These results confirm the effectiveness and reliability of the proposed ETD schemes for simulating the complex dynamics of liquid crystals.
 \vskip 0.2cm
{\bf Acknowledgements.} G. Ji is partially supported by the National Natural Science Foundation of China (Grant No. 12471363).
\bibliographystyle{siam}
\bibliography{S0362546X14002934}

\end{document}